\documentclass[11pt]{amsart}

\usepackage{amssymb,amsmath}

\usepackage[T1]{fontenc} 
\usepackage[utf8]{inputenc}  
\usepackage{csquotes}

\allowdisplaybreaks

\usepackage{vmargin}
\setmarginsrb{3cm}{2cm}{3cm}{2cm}{0cm}{2cm}{0cm}{1cm}

\usepackage{enumitem} 
\usepackage{hyperref}
\usepackage{cleveref}

\usepackage{nicematrix}
\usepackage{array}
\newcolumntype{R}{>{\displaystyle}r}
\newcolumntype{L}{>{\displaystyle}l}

\usepackage{xcolor}
\usepackage{todonotes}
\usepackage{soul}

\theoremstyle{plain}
\newtheorem{theorem}{Theorem}
\newtheorem{lemma}[theorem]{Lemma}
\newtheorem{corollary}[theorem]{Corollary}
\newtheorem{proposition}[theorem]{Proposition}

\theoremstyle{definition}
\newtheorem{definition}[theorem]{Definition}
\newtheorem{example}[theorem]{Example}
\newtheorem{remark}[theorem]{Remark} 
\numberwithin{equation}{section}

\AddToHook{cmd/lemma/before}{\crefalias{theorem}{lemma}} 
\AddToHook{cmd/corollary/before}{\crefalias{theorem}{corollary}} 
\AddToHook{cmd/proposition/before}{\crefalias{theorem}{proposition}} 
\AddToHook{cmd/definition/before}{\crefalias{theorem}{definition}} 
\AddToHook{cmd/example/before}{\crefalias{theorem}{example}} 
\AddToHook{cmd/remark/before}{\crefalias{theorem}{remark}} 

\newcommand{\R}{\mathbb R}
\newcommand{\Z}{\mathbb Z}
\newcommand{\N}{\mathbb N}

\newcommand{\val}{\mathrm{val}}
\newcommand{\lex}{\mathrm{lex}}
\DeclareMathOperator{\rad}{rad}

\makeatletter
\newcommand*\bigcdot{\mathpalette\bigcdot@{.5}}
\newcommand*\bigcdot@[2]{\mathbin{\vcenter{\hbox{\scalebox{#2}{$\m@th#1\bullet$}}}}}
\makeatother

\title{Parry condition, existence and uniqueness of alternate bases}
\author{\'Emilie Charlier$^{1,*}$, Savinien  Kreczman$^{1}$, Zuzana Mas\'akov\'a$^2$ and Edita Pelantov\'a$^2$}
\address{$^1$Department of Mathematics\\
University of Li\`ege\\
All\'ee de la D\'ecouverte 12,
4000 Li\`ege, Belgium\\
$^2$Department of Mathematics FNSPE\\
Czech Technical University in Prague\\
Trojanova 13, 120 00 Praha 2, Czech Republic}
 \thanks{\emph{E-mail:} \url{echarlier@uliege.be, savinien.kreczman@uliege.be, zuzana.masakova@fjfi.cvut.cz, edita.pelantova@fjfi.cvut.cz}\\
$^*$Corresponding author.}

\subjclass{11A67, 15B48, 11C20, 37B10, 11K16}
\keywords{
Expansion of real numbers,
Cantor real bases,
Alternate bases,
Parry condition,
Furstenberg theorem,
B-integers,
S-adic sequences,
Arnoux-Rauzy sequences,
Sturmian sequences,
N-continued fractions
}

\date{\today}

\begin{document}

\begin{abstract}
Alternate bases are a numeration system that generalizes the Rényi numeration system. It is common in this context to construct examples or counter-examples by specifying the expansions of $1$ in the desired system. While it is easy to show when a system with given expansions of $1$ exists in the Rényi case, the same is not true in the alternate case.
In this article, we establish conditions for given words to be the expansions of $1$ in the alternate case. To do so, we use a fixed point theorem on matrices defined from the expansions and obtain the elements of the base from the components of the fixed point.
We also obtain a partial result for the uniqueness of such a base.
In the latter parts of the article, we use similar techniques to prove the existence of bases with a given sequence of $B$-integers.
\end{abstract}

\maketitle

\section{Introduction}

Numeration systems with Cantor real bases are generalizations of Rényi numeration systems. Rényi~\cite{Renyi:1957} considered a real base $\beta>1$ and expressed each non-negative real number $x$ as a series 
\[
    x=\sum_{n\in\Z}a_n\beta^n
\]
with digits $a_n\in\Z\cap[0,\beta)$ with the condition that $a_n=0$ for $n$ greater than some rank. Any sequence of non-negative integers $(a_n)_{n\in \Z}$ with this property is a $\beta$-representation of the number $x$. In general, the $\beta$-representation is not unique, and one chooses the canonical one with a greedy algorithm. This canonical expansion is called the $\beta$-expansion of $x$. Not every sequence of non-negative integers serves as the $\beta$-expansion of some $x$. These `admissible' sequences can be identified by the Parry condition~\cite{Parry:1960} using a lexicographic comparison with the so-called quasi-greedy expansion of 1, which is defined as the lexicographically greatest sequence $d_1d_2d_3\cdots$ of non-negative integers with infinitely many non-zeros such that 
\[
    1=\sum_{n\geq 1}\frac{d_n}{\beta^n}.
\]
This sequence is denoted by $d_\beta^*(1)$. A sequence of non-negative integers is admissible if and only if its suffixes are all lexicographically smaller than  $d_\beta^*(1)$.

It is quite straightforward to show that a sequence $a_1a_2a_3\cdots$ of non-negative integers not ultimately vanishing serves as the quasi-greedy expansion of $1$ for some base $\beta>1$ if and only if its suffixes are all lexicographic less than or equal to the sequence itself. The proof is done in two steps. First, one realizes that there is a unique solution $X=\beta^{-1}\in(0,1)$ of the equation
\[
    1=\sum_{n\ge 1}a_nX^n
\]
by monotonicity of the map $x\mapsto\sum_{n\ge 1}a_nx^n$ on the interval $(0,1)$.
Second, the lexicographic condition ensures that the $\beta$-representation $a_1a_2a_3\cdots$ is lexicographically 
maximal, i.e., it is the quasi-greedy expansion of $1$ for $\beta$.

Cantor real base systems generalize Rényi systems by replacing the sequence of powers of the same base $\beta>1$ by products of members of a bi-infinite sequence of bases. The definition was developed first for numbers $x\in[0,1)$ in~\cite{Charlier&Cisternino:2021} and independently in~\cite{Caalim&Demegillo:2020}, later for any non-negative $x$ in~\cite{Charlier&Cisternino&Masakova&Pelantova:2025}. Consider $B=(\beta_n)_{n\in\Z}$ with $\beta_n>1$. A $B$-representation of a non-negative real number $x$ is a sequence of digits $(a_n)_{n\in \Z}$ such that 
\[
    x=\sum_{n=0}^{+\infty} a_n\beta_{n-1}\cdots\beta_0
    +\sum_{n=1}^{+\infty}\frac{a_{-n}}{\beta_{-1}\cdots\beta_{-n}},
\]
with the condition that $a_n = 0$ for $n$ greater than some rank. In general, this expression is not unique and one chooses the canonical one, called the $B$-expansion, by using the greedy algorithm. The language of $B$-expansions can be described by an analogue of the Parry condition, which lexicographically compares the suffixes of the sequence of digits with the so-called quasi-greedy expansions of 1 in base $B$ and its shifted versions; see \Cref{thm:CantorParryCondition}. It is not difficult to show that the sequence $d_B^*(1)$ itself satisfies such lexicographic conditions with, however, equality allowed; see \Cref{cor:CantorParryCondition}.

Note that if $B$ is a constant sequence $\beta_n=\beta$, the system becomes the Rényi numeration system. A more interesting system, but still less complicated than the general Cantor real base system, is if the base $B$ is periodic. When the minimal period is denoted by $p$, we speak of alternate base of length $p$.

A common way of constructing examples or counter-examples in this setting is to specify not the value of the bases $\beta_n$, but rather those of the quasi-greedy expansions of $1$ with respect to all the shifted bases $S^N(B)=(\beta_{n+N})_{n\in\Z}$. To do so we must ensure that the given words are indeed realizable as expansions of $1$ in some base. As explained above, in the setting of Rényi numeration systems, this problem is well understood. However, even for alternate bases, the problem is harder and only partial results are known -- for instance, \cite[Theorem 6.4]{Charlier&Cisternino&Masakova&Pelantova:2025} obtains uniqueness but only among some alternate bases. The existence of bases with given expansions is often verified by ad hoc computation. The aim of this article is to close this gap in knowledge in the alternate case. 

The first part of our paper is concerned with the following questions. Given a list of $p$ sequences $\mathbf{a}_0,\ldots,\mathbf{a}_{p-1}$, does there exist an alternate base $B$ of length $p$ such that these words are indeed the greedy or quasi-greedy expansions with respect to the $p$ shifts of the base? Is such a base unique?

The answer to this problem is formulated in \Cref{thm:existenceAlternateNonParry}, which is our main result. The proof is divided into several steps. 
First, we show that the lexicographic conditions characterize the greedy and quasi-greedy expansions of $1$ in a general Cantor real base (\Cref{sec:Parry-conditions}). Then, we address the question of the uniqueness of an alternate base with a given list of quasi-greedy expansions of $1$. More precisely, we show that two distinct alternate bases of a given length $p$ cannot share the same list of $p$ quasi-greedy expansions of $1$ with respect to the shifted bases $S^i(B)$, for $i\in\{0,\ldots,p-1\}$,  provided that these sequences all start with a digit greater than $1$ (\Cref{sec:uniqueness}).
Next, we turn to the existence part of the result. We start by deriving some tools from a fixed point theorem of Furstenberg (\Cref{sec:Furstenberg}) concerning products of matrices. This theorem can be seen as an extension of the Perron-Frobenius theorem for irreducible matrices. It associates a generalized eigenvector with an infinite sequence of non-negative matrices. 
Then we show that given a list of $p$ sequences that are lexicographically greater than $10^\omega$, there exists an alternate base $B$ such that these sequences are representations of $1$ with respect to the shifted bases $S^i(B)$, for $i\in\{0,\ldots,p-1\}$ (\Cref{sec:existence}). We proceed by defining a sequence of non-negative matrices from the desired representations of $1$, then use the coefficients of its generalized eigenvectors to construct our alternate base. 
Finally, assuming that the $p$ given sequences satisfy the lexicographic conditions, we derive that they are indeed equal to the greedy or quasi-greedy expansions of $1$ with respect to the $p$ shifted bases. 

In the second part of the paper we use our results in order to show that Cantor real base numeration systems can be used to provide a new interpretation to a large family of infinite words over finite alphabets, including the well-known and extensively studied Arnoux-Rauzy words, or words associated with $N$-continued fractions as introduced by Langeveld, Rossi and Thuswaldner \cite{Langeveld&Rossi&Thuswaldner:2023}. Indeed, these infinite words are found among the symbolic codings of sequences of real numbers with no fractional part with respect to a suitably chosen Cantor real base $B$ (\Cref{sec:specific}).

\section{Preliminaries}
\label{sec:prelim}

In this section, we recall the basic notions associated with Cantor real bases that will be relevant throughout this article. We also set some notation.

\emph{Cantor real bases} \cite{Caalim&Demegillo:2020,Charlier&Cisternino:2021} offer a generalization of Rényi numeration systems. Rather than being directed by a single base, a whole sequence is given as the base of the numeration system. We let $B = (\beta_n)_{n \in \Z}$ be the base sequence and ask that $\beta_n>1$ for all $n$ and $\prod_{n=1}^{+\infty} \beta_n=\prod_{n=1}^{+\infty} \beta_{-n}=+\infty$. We consider bi-infinite words $\mathbf{a}=(a_n)_{n\in\Z}$ over $\N$ with a left tail of zeros, i.e., for which there exists an $N\in\Z$ such that $a_n=0$ for all $n\ge N$ and $a_{N-1}\ne 0$. We write 
\[
	\mathbf{a}=
    \begin{cases}
	a_{N-1}\cdots a_0 \bigcdot a_{-1}a_{-2}\cdots, 	& \text{if }N\ge 1;	\\
	\hspace{1.55cm} 0 \bigcdot 0^{-N}a_{N-1} a_{N-2}\cdots,		& \text{if }N\le 0.
	\end{cases}
\]
We define the evaluation of such words as
\[
    \val_B(\mathbf{a})
    =\sum_{n=0}^{+\infty} a_n\beta_{n-1}\cdots\beta_0 
    + \sum_{n=1}^{+\infty} \frac{a_{-n}}{\beta_{-1}\cdots\beta_{-n}} 
\]
where the first sum is always finite due to the left tail of zeros. If $\val_B(\mathbf{a})<+\infty$, then we say that $\mathbf{a}$ is a \emph{$B$-representation} of the number $\val_B(\mathbf{a})$. 

We select the largest representation in the radix order to be the (canonical) \emph{$B$-expansion} of a given number, where the radix order $<_{\rad}$ is defined by $\mathbf{a}<_{\rad}\mathbf{b}$ if there exists an index $i\in\Z,\ a_j=b_j \text{ for all }j> i$ and $ a_i<b_i$. We denote the $B$-expansion of the number $x$ by $\langle x\rangle_B$. It can be interpreted with a greedy algorithm iteratively computing its digits $a_n$ in decreasing indexing. For $x\ge 1$, let $N$ be the minimal integer such that $x<\beta_{N-1}\cdots\beta_0$ and for $x\in[0,1)$, let $N=0$. Set $a_n=0$ for $n\ge N$. Then set $r_{N-1}=\frac{x}{\beta_{N-1}\cdots\beta_0}$ and for all $n\le N-1$, define $a_n=\lfloor\beta_nr_n\rfloor$ and $r_{n-1}=\beta_nr_n-a_n$. The following lemma gives a way to check that a given sequence is a greedy expansion of some real number, see \cite[Corollary 11]{Charlier&Cisternino:2021} and \cite[Lemma 3.1]{Charlier&Cisternino&Masakova&Pelantova:2025}.

\begin{lemma}
\label{lem:greedy}
Let $B=(\beta_n)_{n\in \Z}$ be a Cantor real base. A sequence $(a_n)_{n\in\Z}$ of non-negative integers, with $a_n=0$ for sufficiently large positive $n$, is the $B$-expansion of some non-negative real number if and only if we have
\[
	\sum_{n=0}^k a_n\beta_{n-1}\cdots\beta_0+\sum_{n=1}^{\infty}\frac{a_{-n}}{\beta_{-1}\cdots\beta_{-n}} < \beta_k\cdots\beta_0\quad\text{for all }k\ge 0
\]
and
\[
	\sum_{n=k}^{\infty}\frac{a_{-n}}{\beta_{-1}\cdots\beta_{-n}} < \frac{1}{\beta_{-1}\cdots\beta_{-k+1}}\quad\text{for all }k\ge 1.
\]
\end{lemma}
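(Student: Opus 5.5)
The plan is to compare the sequence $(a_n)$ with the output of the greedy algorithm described just before the lemma, and to translate both displayed inequalities into statements about the remainders $r_n$. Write $x=\val_B(\mathbf a)$. For each index $n$ set
\[
\rho_n=\frac{1}{\beta_n\cdots\beta_0}\Big(\sum_{m=0}^{n} a_m\beta_{m-1}\cdots\beta_0+\sum_{m\ge1}\frac{a_{-m}}{\beta_{-1}\cdots\beta_{-m}}\Big)\quad(n\ge0),
\]
with the analogous normalised tail $\rho_{-k}=\beta_{-1}\cdots\beta_{-k+1}\sum_{m\ge k}a_{-m}/(\beta_{-1}\cdots\beta_{-m})$ for $k\ge1$. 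These are exactly the quantities the greedy algorithm would carry as the remainders $\beta_n r_n$ if $(a_n)$ were its output. The two conditions of the lemma then say precisely that every $\rho_n$ lies in $[0,1)$. The first condition covers the tails from the top down to position $k\ge0$, and the second covers the purely fractional tails.

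For necessity, assume $(a_n)$ is the $B$-expansion, so it is produced by the greedy algorithm. I will check by induction along decreasing $n$ that $a_n=\lfloor\beta_n r_n\rfloor$ and $r_{n-1}=\beta_n r_n-a_n\in[0,1)$, using the identity $r_{n-1}=\rho_{n-1}$ obtained by unwinding the recursion. For indices $k\ge N$ at which $a_k=0$, the inequality holds by the minimality of $N$, since $x<\beta_{N-1}\cdots\beta_0\le\beta_k\cdots\beta_0$. The strict bound $r_{n-1}<1$ holds because the remainder is a fractional part. One must also check that the remainder equals the value of the remaining tail. This uses the fact that the remainders stay bounded while $\prod\beta_{-n}=+\infty$, so the tail series converges to the remainder.

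For sufficiency, assume all $\rho_n\in[0,1)$. I first identify $N$. Let $M$ be the largest index with $a_{M-1}\ne0$. Then $x\ge a_{M-1}\beta_{M-2}\cdots\beta_0\ge\beta_{M-2}\cdots\beta_0$, and the condition at $k=M-1$ gives $x<\beta_{M-1}\cdots\beta_0$. Hence the greedy starting index is $N=M$, or $N=0$ when $x<1$. Next I show by downward induction that the greedy digits coincide with the $a_n$. If $r_n=\rho_n/\beta_n$, then
\[
\beta_n r_n=\rho_n=a_n+\rho_{n-1}/\beta_n\cdot\beta_n\text{-normalised tail}=a_n+\rho_{n-1},
\]
with $\rho_{n-1}\in[0,1)$, so $\lfloor\beta_n r_n\rfloor=a_n$ and $r_{n-1}=\rho_{n-1}$.

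The main obstacle is bookkeeping rather than any deep idea. The normalisations switch at index $0$ between products $\beta_{n-1}\cdots\beta_0$ and $\beta_{-1}\cdots\beta_{-n}$, and the two displayed conditions must be matched exactly to the remainders at indices $k\ge0$ and $-k+1\le0$ respectively. I would handle this by writing a single formula for $\rho_n$ valid for all $n$ and verifying the recursion $\rho_n=a_n+\rho_{n-1}$ at the boundary $n=0$ separately. An alternative would be to cite the fractional-part characterisation \cite[Corollary 11]{Charlier&Cisternino:2021} for the second condition and reduce the integer part to it by shifting the base.
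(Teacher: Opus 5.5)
The paper gives no proof of this lemma. It is imported from \cite[Corollary 11]{Charlier&Cisternino:2021} and \cite[Lemma 3.1]{Charlier&Cisternino&Masakova&Pelantova:2025}, so your argument has to stand on its own. Your strategy is the standard one and is sound: read the two families of inequalities as ``every normalised tail $\rho_n$ is $<1$'', then compare $(a_n)$ with the greedy algorithm run on $x=\val_B(\mathbf a)$. Two further steps are also correct:
\begin{itemize}
\item the identification of the starting index, namely $N=M$ if $M\ge 1$ and $N=0$ otherwise, the latter because $\rho_{-1}<1$;
\item the use of $\prod_{n\ge1}\beta_{-n}=+\infty$ to identify the greedy remainders with the values of the tails in the necessity direction.
\end{itemize}

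However, there is a normalisation error, and it makes the sufficiency step false as written. With your definition, $\rho_n$ is the greedy remainder $r_n$ itself, not $\beta_n r_n$. Three things confirm this: the algorithm starts from $r_{N-1}=x/(\beta_{N-1}\cdots\beta_0)=\rho_{N-1}$; your necessity part uses $r_{n-1}=\rho_{n-1}$; and the greedy algorithm guarantees $r_n<1$, not $\beta_n r_n<1$.

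The recursion your $\rho$'s actually satisfy is
\[
\beta_n\rho_n=a_n+\rho_{n-1}\qquad\text{for all }n\in\Z,
\]
that is, $\rho_n=(a_n+\rho_{n-1})/\beta_n$, not $\rho_n=a_n+\rho_{n-1}$. Under your stated hypothesis $r_n=\rho_n/\beta_n$ you would get $\beta_n r_n=(a_n+\rho_{n-1})/\beta_n$, whose integer part is generally not $a_n$. Moreover, the conclusion $r_{n-1}=\rho_{n-1}$ does not have the form of the hypothesis, so the induction does not close.

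The fix is to take $r_n=\rho_n$ as the inductive hypothesis, with base case $n=N-1$ as above. Then $\beta_n r_n=a_n+\rho_{n-1}$ with $0\le\rho_{n-1}<1$ gives $\lfloor\beta_n r_n\rfloor=a_n$ and $r_{n-1}=\rho_{n-1}$. No separate treatment of the boundary $n=0$ is needed: $\rho_{-1}$ is the whole fractional tail, so $\beta_0\rho_0=a_0+\rho_{-1}$ holds directly from your definitions.

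One smaller point: in the necessity direction, $r_{N-1}<1$ comes from the choice of $N$, not from $r_{N-1}$ being a fractional part.

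With these corrections the proof is complete.
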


In what follows, we will be working not only with the base $B$ itself but also with its shifts. We define $S(B) = (\beta_{n+1})_{n \in \Z}$. Iterating the map $S$, we get $S^i(B) = (\beta_{n+i})_{n \in \Z}$ for $i\in\Z$. If $S^p(B)=B$ for some $p\ge 1$, meaning that the sequence $B$ is periodic with period $p$, we speak of an \emph{alternate base}. In this case, we will abuse notation by simply writing $B=(\beta_{p-1},\ldots,\beta_0)$, and we call $p$ the \emph{length} of $B$. 

In our setting, the $B$-expansion of $1$ is always given by $\langle 1\rangle=1\bigcdot 0^\omega$. Two other $B$-representations of $1$ will be of particular interest to us. For every $n\in\Z$, we define two (one-sided) infinite words $\mathbf{t}_n$ and $\mathbf{d}_n$ as follows. The infinite word $\mathbf{t}_n$ is the lexicographically greatest infinite word $\mathbf{t}$ such that $\val_{S^n(B)}(0\bigcdot \mathbf{t})=1$, whereas $\mathbf{d}_n$ is defined as the lexicographically greatest infinite word $\mathbf{d}$ not ending in a tail of zeros such that $\val_{S^n(B)}(0\bigcdot \mathbf{d})=1$. Clearly, the words $\mathbf{t}_n$ and $\mathbf{d}_n$ may differ only when $\mathbf{t}_n$ ends in a tail of zeros. By a slight abuse of language, we will talk about $\mathbf{t}_n$ and $\mathbf{d}_n$ as the \emph{greedy} and \emph{quasi-greedy} $S^n(B)$-expansions of $1$, respectively. We let $t_{n,i}$ and $d_{n,i}$, with $ i\ge 1$, denote the digits of these words:
\[
    \mathbf{t}_n =t_{n,1}t_{n,2}t_{n,3}\cdots
    \quad\text{ and }\quad
    \mathbf{d}_n = d_{n,1}d_{n,2}d_{n,3}\cdots
\]
Note that we use an increasing indexing for these (one-sided) infinite words. The quasi-greedy expansions of $1$ can be recursively obtained from the greedy expansions of $1$ as follows:
\[
\mathbf{d}_n = 
\begin{cases}
    t_{n,1}\cdots t_{n,\ell-1}(t_{n,\ell}-1)\mathbf{d}_{n-\ell}, & \text{ if } \mathbf{t}_n=t_{n,1}\cdots t_{n,\ell}0^\omega \text{ with }t_{n,\ell}\ge 1,\\
    \mathbf{t}_n, & \text{ otherwise}. 
\end{cases}
\]
Equivalently, the quasi-greedy expansions of $1$ are obtained as limits with respect to the product topology on infinite words: for all $n\in\Z$, we have $\lim_{x\to 1^-} \langle x\rangle_{S^n(B)}=0\bigcdot \mathbf{d}_n$.

The interest of greedy and quasi-greedy expansions of $1$ lies in the fact that they can be used to determine when a given $B$-representation of a number is its $B$-expansion. See for instance the following result.

\begin{theorem}[{\cite[Corollary 4.4]{Charlier&Cisternino&Masakova&Pelantova:2025}}]
\label{thm:CantorParryCondition}
A $B$-representation $a_{N-1}\cdots a_0 \bigcdot a_{-1}a_{-2}\cdots$  of a non-negative real number is its $B$-expansion if and only if $a_{n-1}a_{n-2}\cdots <_{\lex} \mathbf{d}_n$ for all $n\le N$.
\end{theorem}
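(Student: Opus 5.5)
The plan is to reduce the statement to \Cref{lem:greedy}, after first rewriting the two families of inequalities there as lexicographic comparisons with the quasi-greedy expansions $\mathbf{d}_n$. Shifting the base lets us treat every tail uniformly. For $n\le N$, the tail $a_{n-1}a_{n-2}\cdots$ read in the base $S^n(B)$ has value
\[
r_n=\val_{S^n(B)}(0\bigcdot a_{n-1}a_{n-2}\cdots).
\]
A direct rescaling of the sums in \Cref{lem:greedy} shows that its conditions say exactly $r_n<1$ for every $n\le N$. For $n=k+1\ge 1$ we divide by $\beta_k\cdots\beta_0$; for $n=-k+1\le 0$ we multiply by $\beta_{-1}\cdots\beta_{-k+1}$. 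For $n$ between the top nonzero digit and $N$ the condition is trivial. Hence it suffices to prove the following claim, stated for any Cantor real base $C$ (applied to $C=S^n(B)$): a digit sequence $\mathbf{a}$ with $\val_C(0\bigcdot\mathbf{a})<\infty$ satisfies $\val_C(0\bigcdot\mathbf{a})<1$ if and only if $\mathbf{a}<_{\lex}\mathbf{d}$, where $\mathbf{d}$ is the quasi-greedy $C$-expansion of $1$. The equivalence only has to hold jointly over all $n\le N$, and I will use this in the proof.

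For ($\Leftarrow$), assume $\mathbf{a}<_{\lex}\mathbf{d}_n$ for all $n$, and let $j$ be the first index with $a_{n-j}<d_{n,j}$. Then
\[
r_n=\sum_{i<j}\frac{d_{n,i}}{\beta_{n-1}\cdots\beta_{n-i}}+\frac{a_{n-j}}{\beta_{n-1}\cdots\beta_{n-j}}+\frac{r_{n-j}}{\beta_{n-1}\cdots\beta_{n-j}}.
\]
Bounding $a_{n-j}\le d_{n,j}-1$ and using $r_{n-j}<1$ would give $r_n<1$, because the remainder of $\mathbf{d}_n$ after position $j$ is nonnegative. This argument is circular, however, since it uses $r_{n-j}<1$ to prove $r_n<1$. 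To break the circularity I would first show the weak inequality $r_m\le 1$ for all $m$ by a limiting/truncation argument: truncate $\mathbf{a}$ after finitely many digits, apply induction along the finitely many positions, and pass to the limit. The weak bound then upgrades to the strict one via the displayed identity, since $d_{n,j}-1-a_{n-j}\ge 0$ and the tail of $\mathbf{d}_n$ is not ultimately zero, so it contributes strictly positive value. I expect this step, getting strictness while avoiding circular induction over infinitely many positions, to be the main obstacle.

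For ($\Rightarrow$), if $\langle x\rangle$ is the greedy expansion, then each tail is itself a greedy $S^n(B)$-expansion of $r_n<1$. Choose $x'\in(r_n,1)$. By monotonicity of the greedy algorithm in the radix order, $\langle r_n\rangle\le_{\lex}\langle x'\rangle$. Letting $x'\to1^-$ and using $\lim_{x\to1^-}\langle x\rangle_{S^n(B)}=0\bigcdot\mathbf{d}_n$ gives $a_{n-1}a_{n-2}\cdots\le_{\lex}\mathbf{d}_n$. Equality is impossible: it would force $r_n=\val(0\bigcdot\mathbf{d}_n)=1$, contradicting $r_n<1$. The monotonicity of greedy digits in $x$ follows directly from the definition $a=\lfloor\beta r\rfloor$, $r'=\beta r-a$.
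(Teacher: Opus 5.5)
Your proposal is correct. Note that the paper does not prove this statement; it quotes it from \cite[Corollary 4.4]{Charlier&Cisternino&Masakova&Pelantova:2025}. Your argument is the standard one, built from the same tools the paper quotes:
\begin{itemize}
\item Your rescaling step is \Cref{lem:greedy} read tail by tail.
\item Your $(\Leftarrow)$ argument is a proof of \Cref{lem:Parry-CC} in the special case $\mathbf{a}_n=\mathbf{d}_n$, applied in the shifted base $S^n(B)$. The exceptional case of that lemma cannot occur, because $\mathbf{d}_n$ never ends in $0^\omega$. You could therefore replace your $(\Leftarrow)$ argument by a citation of \Cref{lem:Parry-CC}.
\item Your $(\Rightarrow)$ argument is fine. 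It uses the monotonicity of greedy digits and $\lim_{x\to1^-}\langle x\rangle_{S^n(B)}=0\bigcdot\mathbf{d}_n$, and lexicographic inequalities are preserved under limits in the product topology.
\end{itemize}

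The step you flag as the main obstacle does go through as you sketch it.
\begin{itemize}
\item Cutting all digits below a position $M$ can only decrease each tail lexicographically, so every truncated tail is still $<_{\lex}\mathbf{d}_m$.
\item Truncated values at positions $m\le M$ are $0$.
\item Induction on $m=M+1,\ldots,N$ with your first-difference identity therefore gives strict inequality for every truncated value. The induction is finite and well-founded because $m-j<m$.
\item Monotone convergence as $M\to-\infty$ then yields $r_m\le 1$.
\item Your upgrade to strict inequality uses that the part of $\mathbf{d}_m$ after position $j$ has positive value, which holds since $\mathbf{d}_m$ is not ultimately zero.
\end{itemize}

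One small correction to the reduction. For $n>N$, the condition coming from \Cref{lem:greedy} is not trivial. It follows from the condition at $n=N$, since $r_n=r_N/(\beta_{n-1}\cdots\beta_N)$.
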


Focusing on representations of $1$, we obtain the following corollary.

\begin{corollary}
\label{cor:CantorParryCondition}  
Let $B=(\beta_n)_{n\in\Z}$ be a Cantor real base. For all $n\in\Z$, we have the following properties.
\begin{enumerate} 
\item The sequences $\mathbf{t}_n$ and $\mathbf{d}_n$ start with a non-zero digit.
\item The sequence $\mathbf{d}_n$ does not end in a tail of zeros.
\item We have $\val_{S^n(B)}(0\bigcdot \mathbf{t}_n)=\val_{S^n(B)}(0\bigcdot \mathbf{d}_n)=1$.
\item A sequence $\mathbf{a}_n=a_{n,1}a_{n,2}\cdots$ of non-negative integers such that $\val_{S^n(B)}(\mathbf{a}_n)=1$ is equal to $\mathbf{t}_n$ if and only if 
\[
    \val_{S^{n-j}(B)} (0\bigcdot a_{n,j+1}a_{n,j+2}\cdots) <1
\]
for all $j\ge 1$.
\item (The so-called Parry conditions) We have 
\begin{equation}
\label{eq:Parry-t}
    t_{n,j+1}t_{n,j+2}\cdots <_{\lex} \mathbf{d}_{n-j}
\end{equation}
and 
\begin{equation}
\label{eq:Parry-d}
    d_{n,j+1}d_{n,j+2}\cdots \le_{\lex} \mathbf{d}_{n-j}.
\end{equation}
for all $j\ge 1$.
\end{enumerate}
\end{corollary}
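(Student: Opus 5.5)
The plan is to reduce everything to the greedy algorithm, \Cref{lem:greedy}, and \Cref{thm:CantorParryCondition}, always working in the shifted base $S^n(B)$. Since $S^n(B)$ is again a Cantor real base, it suffices to treat $n=0$ and then rename $S^n(B)$ as $B$. Indices must be tracked carefully: the digit $a_{n,j+1}$ of a word written $0\bigcdot a_{n,1}a_{n,2}\cdots$ in base $S^n(B)$ sits at position $-(j+1)$. The tail $0\bigcdot a_{n,j+1}\cdots$ is therefore naturally evaluated in $S^{n-j}(B)$, because $\beta_{n-1}\cdots\beta_{n-j}$ factors out.

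For item (4), I would apply \Cref{lem:greedy} to the word $0\bigcdot \mathbf{a}_n$ in base $S^n(B)$. The first family of inequalities there reduces, for $k\ge0$, to $1<\beta_{n+k}\cdots\beta_n$, which always holds. The second family, after multiplying by $\beta_{n-1}\cdots\beta_{n-k+1}$, is exactly $\val_{S^{n-j}(B)}(0\bigcdot a_{n,j+1}\cdots)<1$ with $j=k-1\ge 0$. The case $j=0$ reads $1<1$ and fails, so the lemma cannot be applied verbatim to a representation of $1$ itself. I would handle this either by noting that $\mathbf{t}_n$ is produced by the greedy algorithm run on $x=1$ started below the unit position (i.e.\ $r=1$, so $a_{n,1}=\lfloor\beta_{n-1}\rfloor$, etc.), or by adapting the proof of \Cref{lem:greedy}. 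The standard argument is as follows. A representation is the lexicographically greatest one iff at every step the digit equals $\lfloor \beta r\rfloor$, and the remainders equal the tail values. The condition $\text{tail}<1$ for $j\ge1$ is then equivalent to $a=\lfloor\cdot\rfloor$ at each step after the first. The first digit is also forced: if $a_{n,1}<\lfloor\beta_{n-1}\rfloor$, the tail starting at $j=1$ would have value $\ge1$. This is the step I expect to need the most care, since the definition of $\mathbf{t}_n$ is via lexicographic maximality rather than the greedy algorithm. I would prove that the greedy digits dominate any representation digit-by-digit lexicographically, by the usual induction.

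Items (1)–(3) then follow quickly. For (3), the greedy algorithm on $1$ yields a representation, since remainders lie in $[0,1)$ and $\prod\beta_{-k}=\infty$ forces convergence. So $\mathbf{t}_n$ exists, and $\mathbf{d}_n$ is obtained by the recursive formula; an induction on the number of recursion steps (or the limit description $\lim_{x\to1^-}\langle x\rangle$ together with continuity of $\val$ on bounded digit sequences) shows $\val(0\bigcdot\mathbf{d}_n)=1$. For (1), the first digit is $\lfloor\beta_{n-1}\rfloor\ge1$ since $\beta_{n-1}>1$; for $\mathbf{d}_n$ the only change is when $\ell=1$, and then the word continues with $\mathbf{d}_{n-1}$. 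In that case the leading digit is $t_{n,1}-1$, which could be $0$. Here I would argue instead directly: if $\mathbf{d}_n$ began with $0$, then $\val_{S^{n-1}(B)}(0\bigcdot d_{n,2}\cdots)=\beta_{n-1}>1$. Replacing the first digit by $1$ and the tail by a suitable non-terminating representation of $\beta_{n-1}-1$ (greedy expansion of a number $<$ bounded, then perturbed via the quasi-greedy trick) would give a lexicographically larger word, a contradiction. Item (2) is the recursion/definition.

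For (5), I would combine \eqref{eq:Parry-t} with item (4) and \Cref{thm:CantorParryCondition}. The tail $t_{n,j+1}\cdots$ is, by (4), a representation in $S^{n-j}(B)$ of a number $<1$. Its suffixes are tails of $\mathbf{t}_n$, hence again of value $<1$, so by \Cref{lem:greedy} it is the $S^{n-j}(B)$-expansion of some $x<1$. \Cref{thm:CantorParryCondition} at the top index then gives $<_{\lex}\mathbf{d}_{n-j}$. For \eqref{eq:Parry-d}, I would write $\mathbf{d}_n=\lim_{x\to1^-}\langle x\rangle_{S^n(B)}$. Each $\langle x\rangle$ satisfies strict Parry inequalities for its tails by \Cref{thm:CantorParryCondition}, and these pass to the limit in the product topology as non-strict inequalities.
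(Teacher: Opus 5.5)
Your proposal is correct. For the only part the paper actually proves, \eqref{eq:Parry-d}, you take a different route. The paper cites items (1)--(4) from \cite{Charlier&Cisternino:2021}. It says \eqref{eq:Parry-t} is a direct consequence of \Cref{thm:CantorParryCondition}, which your argument spells out: the tail of $\mathbf{t}_n$ after position $j$ is the $S^{n-j}(B)$-expansion of a number in $[0,1)$.

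For \eqref{eq:Parry-d} the paper argues combinatorially from the recursive formula. It cuts $\mathbf{d}_n$ into the blocks $t_{m,1}\cdots t_{m,\ell-1}(t_{m,\ell}-1)$ coming from the finite greedy expansions met along the way. If $j$ is a block boundary, the tail is exactly $\mathbf{d}_{n-j}$. Otherwise the tail is bounded above by a tail of the current $\mathbf{t}_{n-\ell}$, to which \eqref{eq:Parry-t} applies.

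You instead let $x\to 1^-$ in the strict inequalities that \Cref{thm:CantorParryCondition} gives for $\langle x\rangle_{S^n(B)}$. This uses that the shift is continuous and that $\{\mathbf{u} : \mathbf{u}\le_{\lex}\mathbf{v}\}$ is closed in the product topology, which holds even over the infinite alphabet $\N$. Your route is shorter and shows why the strict inequality degenerates to a non-strict one. However, it depends on the limit characterization $\lim_{x\to1^-}\langle x\rangle_{S^n(B)}=0\bigcdot\mathbf{d}_n$, which the paper only quotes. The paper's route uses nothing beyond the recursive formula and \eqref{eq:Parry-t}, and it also shows that \eqref{eq:Parry-d} is strict unless $j$ is a block boundary.

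Two small corrections to your sketches of the cited items. In (1), the case you worry about cannot occur. If $\ell=1$ then $\mathbf{t}_n=t_{n,1}0^\omega$, so $t_{n,1}=\beta_{n-1}$ is an integer greater than $1$. Hence $t_{n,1}-1\ge 1$, and the replacement argument is unnecessary.

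In (3), when the recursion for $\mathbf{d}_n$ never stops, induction alone does not suffice. Continuity of $\val$ is also not available for a general Cantor real base: digits need not be bounded, and $\sum_m 1/(\beta_{n-1}\cdots\beta_{n-m})$ may diverge. Instead, let $L$ be the sum of the first $k$ block lengths and use telescoping: the first $L$ digits of $\mathbf{d}_n$ have value exactly $1-1/(\beta_{n-1}\cdots\beta_{n-L})$. This tends to $1$ because the product diverges.
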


\begin{proof}
 The first four properties were shown in \cite{Charlier&Cisternino:2021}. The Parry condition \eqref{eq:Parry-t} for the greedy expansions $\mathbf{t}_n$ is a direct consequence of \Cref{thm:CantorParryCondition}. 
 Let now $n\in\Z$ be fixed and let us show the Parry condition for the quasi-greedy expansion $\mathbf{d}_n$. 
 Let $(\ell_k)_{k\ge 0}$ be the (finite or infinite) sequence of lengths of the finite greedy-expansions (i.e., not ending in a tail of zeros) that are encountered in the computation of $\mathbf{d}_n$. If $j=\ell_1+\cdots+\ell_k$ for some $k$, then $d_{n,j+1}d_{n,j+2}\cdots=\mathbf{d}_{n-j}$, in which case we are fine. Otherwise, choose $k$ maximal such that $\ell:=\ell_1+\cdots+\ell_k<j$. Then $d_{n,j+1}d_{n,j+2}\cdots \le_{\lex} t_{n-\ell,j-\ell+1}t_{n-\ell,j-\ell+2}\cdots$, hence we obtain that $d_{n,j+1}d_{n,j+2}\cdots <_{\lex} \mathbf{d}_{n-j}$ by using the Parry condition for $\mathbf{t}_{n-\ell}$. 
\end{proof}

\section{The Parry conditions characterize the greedy and the quasi-greedy expansions of 1}
\label{sec:Parry-conditions}

First, we recall the following result from \cite{Charlier&Cisternino:2021}. 

\begin{lemma}[{\cite[Lemma 25]{Charlier&Cisternino:2021}}]
\label{lem:Parry-CC}
Let $B$ be a Cantor real base and for all $n\le 0$, let $\mathbf{a}_n=a_{n,1}a_{n,2}\cdots$ be a sequence of non-negative integers such that $\val_{S^n(B)}(0\bigcdot \mathbf{a}_n)=1$. If $\mathbf{b}=b_1b_2\cdots$ is a sequence of non-negative integers such that $b_{n+1}b_{n+2}\cdots<_{\lex}\mathbf{a}_{-n}$ for all $n\ge 0$, then we have $\val_{S^{-n}(B)}(0\bigcdot b_{n+1}b_{n+2}\cdots)<1$ for all $n\ge 0$, unless there exist $n\ge 0$ and $\ell\ge 1$ such that
\begin{itemize}
    \item $\mathbf{a}_{-n}=a_{-n,1}\cdots a_{-n,\ell}0^\omega$ where $a_{-n,\ell}\ne 0$,
    \item $b_{n+1}\cdots b_{n+\ell}=a_{-n,1}\cdots a_{-n,\ell-1}(a_{-n,\ell}-1)$,
    \item $\val_{S^{-n-\ell}(B)}(b_{n+\ell+1}b_{n+\ell+2}\cdots)=1$,
\end{itemize}
in which case we have $\val_{S^{-n}(B)}(0\bigcdot b_{n+1}b_{n+2}\cdots)=1$.
\end{lemma}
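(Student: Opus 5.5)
We will compare the tails of $\mathbf{b}$ with the sequences $\mathbf{a}_{-n}$ digit by digit and sum the resulting contributions. Fix $n\ge 0$ and write $x_n=\val_{S^{-n}(B)}(0\bigcdot b_{n+1}b_{n+2}\cdots)$. We want to show that $x_n<1$ for every $n$, unless the described exceptional configuration occurs.

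\textbf{Step 1: uniform boundedness.} First we show that $x_n\le 1$ for all $n$. Since $b_{n+1}b_{n+2}\cdots<_{\lex}\mathbf{a}_{-n}$, there is a first index $m\ge 1$ with $b_{n+i}=a_{-n,i}$ for $i<m$ and $b_{n+m}\le a_{-n,m}-1$. Writing $\pi_k=\beta_{-n-1}\cdots\beta_{-n-k}$, this gives
\[
x_n\le \sum_{i=1}^{m}\frac{a_{-n,i}}{\pi_i}-\frac{1}{\pi_m}+\frac{x_{n+m}}{\pi_m}.
\]
By the hypothesis $\val_{S^{-n}(B)}(0\bigcdot\mathbf{a}_{-n})=1$, the first sum is at most $1$. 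Set $M=\sup_n x_n$, which is finite because the digits are bounded by the first digits $a_{-n,1}$. Then $x_n\le 1+(M-1)/\pi_m$. If $M>1$, we choose $n$ with $x_n$ close to $M$ and obtain $M\le 1+(M-1)/\beta_{\min}$-type contradictions. To do this rigorously, we iterate the inequality along the chain $n\to n+m\to\cdots$. This telescopes into
\[
x_n\le\sum_k\frac{1}{\pi^{(k)}}\Bigl(\sum_{i\le m_k}\frac{a_{\cdot,i}}{\cdots}-1\Bigr)+\frac{x_{\text{far}}}{\prod}\le 1-\text{(nonneg. terms)},
\]
and we use that the products of bases diverge, $\prod\beta_{-k}=+\infty$, to kill the remainder. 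This yields $x_n\le 1$.

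\textbf{Step 2: equality analysis.} Equality $x_n=1$ in the chain estimate forces every inequality used to be an equality. The tail step forces $x_{n+m}=1$, and also $b_{n+m}=a_{-n,m}-1$. It further forces $\sum_{i\le m}a_{-n,i}/\pi_i=1$, that is, $\sum_{i>m}a_{-n,i}/\pi_i=0$. Since the bases are positive, this means $a_{-n,i}=0$ for $i>m$. Hence $\mathbf{a}_{-n}=a_{-n,1}\cdots a_{-n,m}0^\omega$ with $a_{-n,m}\ne 0$ (it is $\ge1$ since $b_{n+m}\ge 0$). Taking $\ell=m$, this is exactly the exceptional configuration, including $\val_{S^{-n-\ell}(B)}(b_{n+\ell+1}\cdots)=1$. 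Conversely, in that configuration a direct computation gives $x_n=1$.

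\textbf{Main obstacle.} The delicate point is the infinite iteration in Step 1: if $b$ never "catches up", the chain of indices $n<n+m_1<n+m_1+m_2<\cdots$ is infinite. We must make sure that the remainder term $x_{\text{far}}/\prod$ tends to $0$. This requires a uniform bound on $x_k$, which we will obtain from $x_k\le\sum_i (\max\text{digit})/\pi_i$. We also need finiteness of this sum, which we will justify via a crude a priori comparison. If that comparison fails, we would instead use a truncation argument on finite prefixes, with a limit at the end.
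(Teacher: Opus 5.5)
The paper quotes this lemma from Charlier--Cisternino without reproving it, so I am assessing your argument directly. Your one-step estimate $x_n\le 1+(x_{n+m}-1)/\pi_m$ is correct, where $m$ is the first index at which $b_{n+1}b_{n+2}\cdots$ and $\mathbf{a}_{-n}$ differ. Your equality analysis in Step 2 is also correct.

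\textbf{The gap is in Step 1}: the finiteness of $M=\sup_n x_n$. Neither of your justifications works.
\begin{enumerate}
\item The bound $b_{n+1}\le a_{-n,1}\le\beta_{-n-1}$ gives no uniform bound on the digits.
\item Even with bounded digits, $\sum_i H/\pi_i$ can diverge. A Cantor real base only requires $\prod_k\beta_{-k}=+\infty$. For example, take $\beta_{-1}=2$ and $\beta_{-k}=(k+1)/k$ for $k\ge 2$. All digits are then at most $2$, but $\beta_{-1}\cdots\beta_{-i}=i+1$, so your crude comparison diverges.
\item The lemma does not assume $x_n<+\infty$. So you cannot treat the remainder $x_{n_K}/\Pi_K$ as the tail of a convergent series.
\item The ``$\beta_{\min}$'' contradiction fails as well, since $\inf_k\beta_k$ may equal $1$.
\end{enumerate}
In $[0,+\infty]$, the inequality $x_n-1\le (x_{n+m}-1)/\pi_m$ is perfectly consistent with $x_n=x_{n+m}=+\infty$. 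So the chain argument by itself proves nothing. Separately, the chain $n<n_1<n_2<\cdots$ is always infinite, because the hypothesis is strict at every shift; there is no ``catching up''.

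\textbf{The fix is the truncation argument you held in reserve}; it is necessary and should be your main argument. For $N\ge 1$, let $\mathbf{b}^{(N)}=b_1\cdots b_N0^\omega$. Replacing a tail by zeros can only decrease a word lexicographically, so $b^{(N)}_{k+1}b^{(N)}_{k+2}\cdots<_{\lex}\mathbf{a}_{-k}$ still holds for every $k\ge 0$. Set $y^{(N)}_k=\val_{S^{-k}(B)}(0\bigcdot b^{(N)}_{k+1}b^{(N)}_{k+2}\cdots)$. These are finite sums, and $y^{(N)}_k=0$ for $k\ge N$. Your one-step estimate then gives $y^{(N)}_k\le 1+(y^{(N)}_{k+m}-1)/\pi_m<1$ by downward induction on $k$, from $N-1$ to $0$. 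Letting $N\to\infty$, monotone convergence gives $x_n=\lim_N y^{(N)}_n\le 1$ for all $n$. This does not even use the divergence of the products.

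With $x_k\le 1$ for all $k$ established, your Step 2 goes through verbatim. The equality $x_n=1$ forces $b_{n+m}=a_{-n,m}-1$, $a_{-n,i}=0$ for $i>m$, and $x_{n+m}=1$, which is exactly the exceptional configuration with $\ell=m$. Your converse computation is also right.
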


With the next result, we show that the Parry conditions from \Cref{cor:CantorParryCondition} characterize the greedy and the quasi-greedy expansions of $1$.

\begin{proposition}
\label{prop:uniqueness}
Let $B$ be a Cantor real base and for all $n\in \Z$, let $\mathbf{a}_n=a_{n,1}a_{n,2}\cdots$ be a sequence of non-negative integers such that $\val_{S^n(B)}(0\bigcdot \mathbf{a}_n)=1$. Assume moreover that for all $n\in\Z$, if $\mathbf{a}_n$ ends in $0^\omega$ then
\[
    a_{n,j+1}a_{n,j+2}\cdots <_{\lex} \mathbf{a}_{n-j} \text{ for all }j\ge 1,
\]
and if $\mathbf{a}_n$ does not end in $0^\omega$ then
\[
    a_{n,j+1}a_{n,j+2}\cdots \le_{\lex} \mathbf{a}_{n-j} \text{ for all }j\ge 1.
\]
Then we have $\mathbf{a}_n=\mathbf{t}_n$ for all $n\in\Z$ such that $\mathbf{a}_n$ ends in a tail of zeros, and we have $\mathbf{a}_n=\mathbf{d}_n$ for all $n\in\Z$ such that $\mathbf{a}_n$ does not end in a tail of zeros.
\end{proposition}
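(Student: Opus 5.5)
We argue in two stages: first the sequences ending in $0^\omega$, using part (4) of \Cref{cor:CantorParryCondition}, and then the remaining sequences, via a limiting argument built on the same tools.

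\emph{Step 1 (finite $\mathbf{a}_n$).} Fix $n$ such that $\mathbf{a}_n$ ends in $0^\omega$. By \Cref{cor:CantorParryCondition}(4), it suffices to show that $\val_{S^{n-j}(B)}(0\bigcdot a_{n,j+1}a_{n,j+2}\cdots)<1$ for all $j\ge 1$. We apply \Cref{lem:Parry-CC} to the shifted base $S^{n}(B)$, taking the family $(\mathbf{a}_{n-m})_{m\ge 0}$ and the sequence $\mathbf{b}=a_{n,2}a_{n,3}\cdots$ viewed with respect to $S^{n-1}(B)$. Equivalently, we use the family $(\mathbf{a}_{n-1-m})_{m\ge0}$ and $\mathbf b$ with $b_{m+1}b_{m+2}\cdots=a_{n,m+2}\cdots<_{\lex}\mathbf{a}_{n-1-m}$, which is exactly the strict hypothesis. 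The lemma then gives the strict inequality unless its exceptional configuration occurs. In that configuration some $\mathbf{a}_{n-j}$ equals $a_{n-j,1}\cdots a_{n-j,\ell}0^\omega$, the tail of $\mathbf{a}_n$ agrees with $a_{n-j,1}\cdots(a_{n-j,\ell}-1)$, and the remaining tail has value exactly $1$ in $S^{n-j-\ell}(B)$. Since $\mathbf{a}_n$ is finite, that remaining tail ends in zeros and has value $1$. We would rule this out by induction on the length of the finite word $\mathbf{a}_n$ (number of digits up to the last nonzero one). A tail of value exactly $1$ that is shorter than $\mathbf{a}_n$ satisfies the strict hypothesis at every position, because it is a tail of $\mathbf{a}_n$. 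Comparing it lexicographically with $\mathbf{a}_{n-j-\ell}$ at $j'=j+\ell$ gives $<_{\lex}$, and two representations of $1$ in which one is greedy-like and strictly smaller should contradict each other. The cleanest route is probably to prove the finite case by strong induction on the position of the last nonzero digit, deriving $\mathbf{t}=$ tail and then contradicting strictness, or contradicting the fact that a value-$1$ finite word must be $\ge_{\lex}$ anything else of value $1$ that satisfies the Parry conditions. This step is the main obstacle. I expect to handle it by noting that at the last nonzero digit the exceptional case forces a representation of $1$ of the form $\ldots(x-1)$ followed by a value-$1$ tail. That tail would need infinitely many nonzero digits, or be longer, which is impossible inside a finite word. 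Indeed, the exceptional tail starts after position $j+\ell$, and the value-$1$ condition forces a nonzero digit there, so the nesting must terminate, giving a contradiction.

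\emph{Step 2 (infinite $\mathbf{a}_n$).} Now let $\mathbf{a}_n$ not end in $0^\omega$. First, $\mathbf{a}_n\le_{\lex}\mathbf{d}_n$, since $\mathbf{d}_n$ is the greatest such representation. For the reverse inequality, suppose $\mathbf{a}_n<_{\lex}\mathbf{d}_n$ and let $i$ be the first differing index, so $a_{n,i}<d_{n,i}$. The tail inequalities $a_{n,j+1}\cdots\le_{\lex}\mathbf{a}_{n-j}\le_{\lex}\mathbf{d}_{n-j}$ (the second by this same maximality) say that $\mathbf{a}_n$ is a weak Parry sequence with respect to the true $\mathbf{d}$'s. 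Then
\[
\val_{S^n(B)}(0\bigcdot\mathbf a_n)\le \val(0\bigcdot d_{n,1}\cdots d_{n,i-1}a_{n,i})+\tfrac{1}{\beta_{n-1}\cdots\beta_{n-i}}\val_{S^{n-i}(B)}(0\bigcdot a_{n,i+1}\cdots).
\]
By \Cref{lem:Parry-CC} (or \Cref{thm:CantorParryCondition} via limits), tails that are $\le_{\lex}\mathbf{d}_{n-i}$ have value at most $1$. This gives a value at most that of $d_{n,1}\cdots d_{n,i-1}(a_{n,i}+1)0^\omega$, which is $\le \val(0\bigcdot\mathbf d_n)=1$. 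Equality would force the tail of $\mathbf{a}_n$ from position $i+1$ to have value exactly $1$ and to equal $\mathbf{d}_{n-i}$ throughout, which is then consistent only with $a_{n,i}+1=d_{n,i}$ together with a finite greedy prefix. This yields a strict inequality $<1$ unless $\mathbf{d}_n$ itself arises from the recursive formula, and tracking that case recursively shows $\mathbf a_n=\mathbf d_n$.
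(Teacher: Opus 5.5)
Your Step 1 is essentially the paper's argument. The paper makes your ``the nesting must terminate'' precise by taking the largest $j\ge 1$ whose tail has $S^{n-j}(B)$-value $\ge 1$ (finitely many, since $\mathbf{a}_n$ ends in $0^\omega$). Applying \Cref{lem:Parry-CC} to that tail can only fail through the exceptional configuration, which produces a strictly later tail of value exactly $1$ and contradicts maximality.

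The genuine gap is in Step 2, in the chain $a_{n,j+1}a_{n,j+2}\cdots\le_{\lex}\mathbf{a}_{n-j}\le_{\lex}\mathbf{d}_{n-j}$. The second inequality does not follow from maximality of $\mathbf{d}_{n-j}$ when $\mathbf{a}_{n-j}$ ends in $0^\omega$. In that case Step 1 gives $\mathbf{a}_{n-j}=\mathbf{t}_{n-j}$, which is lexicographically \emph{greater} than $\mathbf{d}_{n-j}$. For instance, take the constant base $\beta_n=\frac{1+\sqrt5}{2}$ with $\mathbf{a}_n=110^\omega$ for even $n$ and $\mathbf{a}_n=(10)^\omega$ for odd $n$. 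This family satisfies all the hypotheses, yet for odd $n$ we have $\mathbf{a}_{n-1}=110^\omega>_{\lex}(10)^\omega=\mathbf{d}_{n-1}$. So you have not shown that the tails of $a_{n,i+1}a_{n,i+2}\cdots$ are dominated by the true $\mathbf{d}$'s, and your key estimate $\val_{S^{n-i}(B)}(0\bigcdot a_{n,i+1}a_{n,i+2}\cdots)\le 1$ is unsupported. That domination does hold a posteriori, but proving it directly would need an extra descent through the recursive definition of the $\mathbf{d}$'s. Your closing sentences are also off: no recursive tracking is needed. Since $\mathbf{d}_n$ does not end in $0^\omega$, you get $\val_{S^n(B)}(0\bigcdot d_{n,1}\cdots d_{n,i}0^\omega)<1$ strictly.

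The fix is to bound the tail through the $\mathbf{a}$'s rather than the $\mathbf{d}$'s, which is exactly the paper's move. For $\ell>i$, every tail of the truncation $a_{n,i+1}\cdots a_{n,\ell}0^\omega$ is strictly below the corresponding $\mathbf{a}_{n-i-m}$. The inequality is strict because $\mathbf{a}_n$ has infinitely many nonzero digits and no $\mathbf{a}_{n-i-m}$ equals $0^\omega$. Your Step 1 argument then gives the truncation value $<1$, and letting $\ell\to\infty$ gives $\le 1$. Hence $1=\val_{S^n(B)}(0\bigcdot\mathbf{a}_n)\le\val_{S^n(B)}(0\bigcdot d_{n,1}\cdots d_{n,i}0^\omega)<1$, a contradiction.

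With this repair, your comparison at the first differing digit is a valid and slightly more elementary alternative to the paper's ending. It uses only the defining maximality of $\mathbf{d}_n$. The paper instead applies the same truncation estimate with $i=0$ and deduces from \Cref{lem:greedy} that $0\bigcdot a_{n,1}\cdots a_{n,\ell}0^\omega=\langle x_\ell\rangle_{S^n(B)}$ with $x_\ell\to1^-$. It then concludes from $\lim_{x\to1^-}\langle x\rangle_{S^n(B)}=0\bigcdot\mathbf{d}_n$.
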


\begin{proof}
Let $n\in\Z$ be fixed. First suppose that $\mathbf{a}_n$ ends in a tail of zeros. By \Cref{cor:CantorParryCondition}, in order to get that $\mathbf{a}_n=\mathbf{t}_n$, it suffices to show that $\val_{S^{n-j}(B)} (0\bigcdot a_{n,j+1}a_{n,j+2}\cdots) <1$ for all $j\ge 1$. Assume by contradiction that there exists some $j\ge 1$ with $\val_{S^{n-j}(B)} (0\bigcdot a_{n,j+1}a_{n,j+2}\cdots) \ge 1$. Since $\mathbf{a}_n$ ends in $0^\omega$, there exist only finitely many such $j$. We pick the greatest such $j$. Then \Cref{lem:Parry-CC} implies that there exists some $\ell>j$ such that $\val_{S^{n-\ell}(B)}(0\bigcdot a_{n,\ell+1}a_{n,\ell+2}\cdots)=1$, which obviously contradicts the maximality of $j$. 

Second, suppose that $\mathbf{a}_n$ does not end in a tail of zeros. Let $(x_\ell)_{\ell\ge 0}$ be the sequence of numbers defined by $x_\ell=\val_{S^n(B)}(0\bigcdot a_{n,1}\cdots a_{n,\ell}0^\omega)$. We have $\lim_{\ell\to+\infty}x_\ell=1$. Let now fix an $\ell\ge 0$. By \Cref{lem:Parry-CC}, we know that $\val_{S^{n-j}(B)}(0\bigcdot a_{n,j+1}\cdots a_{n,\ell}0^\omega)<1$ for all $j\in\{0,\ldots,\ell-1\}$. Then \Cref{lem:greedy} tells us that $0\bigcdot a_{n,1}\cdots a_{n,\ell}0^\omega=\langle x_\ell\rangle_{S^n(B)}$. We obtain that $\mathbf{a}_n=\mathbf{d}_n$ by letting $\ell$ tend to infinity.
 \end{proof}

The following two corollaries are immediate.

\begin{corollary}
 \label{cor:greedyExpansion}
 Let $B$ be a Cantor real base and for all $n\in \Z$, let $\mathbf{a}_n=a_{n,1}a_{n,2}\cdots$ be a sequence of non-negative integers ending in a tail of zeros such that $\val_{S^n(B)}(0\bigcdot \mathbf{a}_n)=1$ and $a_{n,j+1}a_{n,j+2}\cdots <_{\lex} \mathbf{a}_{n-j}$ for all $j\ge 1$. Then $\mathbf{a}_n=\mathbf{t}_n$ for all $n\in\Z$.
 \end{corollary}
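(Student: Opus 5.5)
This is a direct specialization of \Cref{prop:uniqueness}, and I would prove it by simply checking that its hypotheses hold. Suppose we are given the family $(\mathbf{a}_n)_{n\in\Z}$ as in the statement. Then every $\mathbf{a}_n$ satisfies $\val_{S^n(B)}(0\bigcdot\mathbf{a}_n)=1$ and ends in $0^\omega$.

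For such sequences, \Cref{prop:uniqueness} requires only the strict Parry inequalities
\[
a_{n,j+1}a_{n,j+2}\cdots <_{\lex} \mathbf{a}_{n-j} \quad\text{for all } j\ge 1,
\]
and these are exactly what we assume. Its other hypothesis, the non-strict inequality, concerns only those indices $n$ for which $\mathbf{a}_n$ does not end in a tail of zeros. There are no such indices here, so that condition holds vacuously.

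\Cref{prop:uniqueness} therefore applies. Its first conclusion says that $\mathbf{a}_n=\mathbf{t}_n$ for every $n$ such that $\mathbf{a}_n$ ends in a tail of zeros. Here that is every $n\in\Z$, which gives the claim.

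No step poses a real obstacle. The only subtlety is that the comparison sequences $\mathbf{a}_{n-j}$ on the right-hand side are the given sequences, not the true quasi-greedy expansions $\mathbf{d}_{n-j}$. This mismatch is already handled inside the proof of \Cref{prop:uniqueness} via \Cref{lem:Parry-CC}, which takes arbitrary representations $\mathbf{a}_n$ of $1$ as its reference sequences, so nothing further is needed.
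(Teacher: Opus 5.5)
Your proposal is correct and is exactly the paper's route: the paper states this corollary as immediate from \Cref{prop:uniqueness}. As you observe, the non-strict condition there is vacuous because every $\mathbf{a}_n$ ends in $0^\omega$, so the first conclusion of that proposition gives $\mathbf{a}_n=\mathbf{t}_n$ for all $n\in\Z$.
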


\begin{corollary}
 \label{cor:quasi-greedyExpansion}
Let $B$ be a Cantor real base and for all $n\in \Z$, let $\mathbf{a}_n=a_{n,1}a_{n,2}\cdots$ be a sequence of non-negative integers not ending in a tail of zeros such that $\val_{S^n(B)}(0\bigcdot \mathbf{a}_n)=1$ and $a_{n,j+1}a_{n,j+2}\cdots \le_{\lex} \mathbf{a}_{n-j}$ for all $j\ge 1$. Then $\mathbf{a}_n=\mathbf{d}_n$ for all $n\in\Z$.
 \end{corollary}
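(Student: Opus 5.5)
The statement to prove is \Cref{cor:quasi-greedyExpansion}, and I would obtain it as a direct specialization of \Cref{prop:uniqueness}. Let $(\mathbf{a}_n)_{n\in\Z}$ be the given family. Each $\mathbf{a}_n$ is a sequence of non-negative integers with $\val_{S^n(B)}(0\bigcdot \mathbf{a}_n)=1$. Moreover, no $\mathbf{a}_n$ ends in a tail of zeros.

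First I check the hypotheses of \Cref{prop:uniqueness}. The first hypothesis there concerns only those $n$ for which $\mathbf{a}_n$ ends in $0^\omega$. There are no such $n$ here, so it holds vacuously. For every $n$, $\mathbf{a}_n$ does not end in $0^\omega$, and for these the proposition requires $a_{n,j+1}a_{n,j+2}\cdots \le_{\lex} \mathbf{a}_{n-j}$ for all $j\ge 1$. This is exactly what the corollary assumes. The normalization $\val_{S^n(B)}(0\bigcdot\mathbf{a}_n)=1$ is also part of the assumptions.

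\Cref{prop:uniqueness} then gives $\mathbf{a}_n=\mathbf{d}_n$ for every $n$ such that $\mathbf{a}_n$ does not end in a tail of zeros, that is, for every $n\in\Z$. This is the claim.

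There is essentially no obstacle here. The only point needing care is the vacuous case of the first hypothesis. All the real work (\Cref{lem:Parry-CC} together with \Cref{lem:greedy} and the limiting argument $x_\ell\to 1$) is already contained in the second half of the proof of \Cref{prop:uniqueness}.
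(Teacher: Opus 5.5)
Your proposal is correct and matches the paper, which presents this corollary as an immediate specialization of \Cref{prop:uniqueness}. Since no $\mathbf{a}_n$ ends in $0^\omega$, the hypothesis for sequences ending in zeros is vacuous, and the remaining hypothesis is exactly the non-strict lexicographic condition you are given.
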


\section{Uniqueness of the alternate base with prescribed greedy or quasi-greedy expansions of 1}
\label{sec:uniqueness}

In \cite{Charlier&Cisternino&Masakova&Pelantova:2025}, it was shown that there is at most one alternate base with a given list of ultimately periodic quasi-greedy expansions of $1$.
In this section, we partially solve the question on the uniqueness of the base $B$ in the non-Parry case. 

\begin{proposition}
\label{pro:uniqueness}
Let $p\ge 1$ be an integer, let $\alpha$ be the zero of the polynomial $X^p-X^{p-1}-\cdots-1$ in $[1,2)$, and let $B=(\beta_{p-1},\ldots,\beta_0)$ be an alternate base such that $\beta_i>\alpha$ for every~$i$. Then no other alternate base of length $p$ has the same list of greedy (resp.\ quasi-greedy) expansions of $1$. 
\end{proposition}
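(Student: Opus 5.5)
The plan is to argue by contradiction: fix $B$ as in the statement and suppose $B'=(\beta'_{p-1},\ldots,\beta'_0)\neq B$ is an alternate base of length $p$ with the same quasi-greedy expansions $\mathbf{d}_n$, for all $n$. The greedy case should reduce to this one, because the recursion expressing $\mathbf{d}_n$ in terms of the $\mathbf{t}_m$ shows that equal lists of greedy expansions give equal lists of quasi-greedy expansions. Set $u_i=\log\beta'_i-\log\beta_i$, extended $p$-periodically, and write
\[
    w_{n,k}=\frac{d_{n,k}}{\beta_{n-1}\cdots\beta_{n-k}}, \qquad \sum_{k\ge1}w_{n,k}=1 ,
\]
which holds by \Cref{cor:CantorParryCondition}. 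Since $\mathbf{d}_n$ is also a representation of $1$ in $S^n(B')$, we get for every $n$
\[
    \sum_{k\ge 1} w_{n,k}\bigl(e^{-(u_{n-1}+\cdots+u_{n-k})}-1\bigr)=0 .
\]
The aim is to choose $n$ so that every term of this sum is $\le 0$ and at least one is $<0$, which is impossible.

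\textbf{Choosing $n$.} Interchanging $B$ and $B'$ if necessary, assume $U:=u_0+\cdots+u_{p-1}\ge 0$. Apply the cyclic (Raney-type) lemma to the periodic sequence $u$, read in decreasing index order: start just after a position where the backward prefix sums reach their minimum over one period. This gives an $n$ with $P_k:=u_{n-1}+\cdots+u_{n-k}\ge 0$ for all $k\ge1$. Then each term above is $\le 0$, and it suffices to find one $k$ with $d_{n,k}\ge1$ and $P_k>0$.

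\textbf{Case $U>0$.} Here $P_k\to+\infty$. Since $\mathbf{d}_n$ has infinitely many non-zero digits by \Cref{cor:CantorParryCondition}, such a $k$ exists and we are done. Note that this case does not use the hypothesis $\beta_i>\alpha$.

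\textbf{Case $U=0$.} This case is the main obstacle, and it is where the hypothesis $\beta_i>\alpha$ must be used. Now $\prod_i\beta'_i=\prod_i\beta_i$ and the sequence $(P_k)$ is $p$-periodic. The contradiction could fail only if every non-zero digit of $\mathbf{d}_n$ sits at a position $k$ with $P_k=0$, and this has to hold for every admissible starting index $n$.

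I would first exploit the freedom in $n$: any index at which the minimal backward prefix sum is attained works, and some $u_i>0$ since $B'\neq B$. The goal is to force a configuration in which $\mathbf{d}_n$ would need long blocks of zeros, namely zeros at every position $k$ with $P_k>0$. The hypothesis should then forbid this. Because each $\beta_i>\alpha$, the relevant products satisfy $\beta_{n-1}\cdots\beta_{n-j}>\alpha^j$. Combining this with the Parry conditions $d_{n,j+1}d_{n,j+2}\cdots\le_{\lex}\mathbf{d}_{n-j}$ and the identity $\beta_{n-1}=d_{n,1}+\val_{S^{n-1}(B)}(0\bigcdot d_{n,2}d_{n,3}\cdots)$, I would try to show that $\mathbf{d}_n$ cannot contain $p$ consecutive zeros in the relevant window. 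Indeed, $\alpha$ is exactly the base whose expansion of $1$ is $1^p$, so the inequality $\beta_i>\alpha$ should exclude the sparse patterns $(10^{p-1})$-like that the zero-pattern would require, yielding a non-zero digit at a position with $P_k>0$.

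If this direct digit argument turns out to be too delicate, my fallback is a contraction estimate on the map $(\beta_{i})_i\mapsto(d_{i+1,1}+\val_{S^{i}(B)}(0\bigcdot d_{i+1,2}d_{i+1,3}\cdots))_i$. Its fixed points are exactly the bases with the given expansions. I would bound its Lipschitz constant (in the $\log$ coordinates) by a series dominated by $\sum_k k\,\alpha^{-k}$-type terms, and show that $\beta_i>\alpha$ makes it a strict contraction near $B$.
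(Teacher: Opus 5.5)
Your reduction of the greedy case to the quasi-greedy one is correct, and so is the identity $\sum_{k\ge 1} w_{n,k}\bigl(e^{-P_k}-1\bigr)=0$. The choice of $n$ through the minimum of the backward prefix sums is also right, and the case $U>0$ is complete. The gap is the case $U=0$: you do not prove it, and neither route you sketch can work.

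\emph{The digit-pattern route} relies on $\beta_i>\alpha$ forbidding blocks of zeros in $\mathbf{d}_n$, and this is false. If all $\beta_i$ are slightly larger than $2$ (hence larger than $\alpha$), every $\mathbf{d}_n$ begins with $20^m$ with $m$ as large as you like.

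\emph{The contraction route} fails already for $p=1$, where $\alpha=1$. For $\mathbf{d}=1^\omega$ your map is $\beta\mapsto 1+\sum_{k\ge2}\beta^{-(k-1)}=\beta/(\beta-1)$, whose logarithmic derivative at the fixed point $\beta=2$ is exactly $-1$. In any case, a contraction near $B$ would only exclude a second base close to $B$.

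What is missing is a better use of the freedom in $n$, and it makes the hypothesis unnecessary. Suppose $U=0$ and $u\neq 0$. The backward prefix sums $S_m=u_{-1}+\cdots+u_{-m}$ (with $S_0=0$) then form a $p$-periodic, non-constant sequence. Its set of minimizers is therefore a non-empty proper periodic subset of $\N$, so there is a minimizer $m$ with $S_{m+1}>S_m$. For $n=-m$ you get $P_k=S_{m+k}-S_m\ge 0$ for all $k\ge1$ and $P_1=u_{n-1}>0$. Since $d_{n,1}\ge 1$ by \Cref{cor:CantorParryCondition}, the term $w_{n,1}(e^{-P_1}-1)$ is strictly negative, which gives your contradiction.

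Completed this way, your argument is a genuinely different and more elementary route than the paper's. The paper telescopes $\val_{S^i(B)}(0\bigcdot\mathbf{t}_i)=1$ using the greedy remainders $r_{i,n}\in[0,1)$ of the other base. It groups the terms modulo $p$ into a linear system $EF_0^\intercal=0$. It then uses $\beta_i>\alpha$, via $\sum_{j=1}^{p-1}\alpha^{-j}\alpha^p/(\alpha^p-1)=1$, precisely to make $E$ strictly diagonally dominant, which forces $F_0=0$. Your comparison argument only uses that the common representations of $1$ start with a non-zero digit and have infinitely many non-zero digits. It therefore yields uniqueness without assuming $\beta_i>\alpha$ at all, which is stronger than the statement.
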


\begin{proof} 
Let $\Gamma=(\gamma_{p-1},\ldots,\gamma_0)$ be an alternate base such that for all $i\in\{0,\ldots,p-1\}$, the greedy $S^i(\Gamma)$-expansion of $1$ coincides with the greedy $S^i(B)$-expansion of $1$, which we denote $\mathbf{t}_i=t_{i,1}t_{i,2}t_{i,3}\cdots$. 

Let $i\in\{0,\ldots,p-1\}$. The digits $t_{i,n}$ are obtained thanks to the greedy algorithm as follows. We set $r_{i,0}=1$ and for all $n\ge 1$, set $t_{i,n}=\lfloor\gamma_{i-n}r_{i,n-1}\rfloor$ and $r_{i,n}=\gamma_{i-n}r_{i,n-1}-t_{i,n}$. Thus, we have $r_{i,n}\in[0,1)$ and $t_{i,n}=\gamma_{i,n}r_{i,n-1}-r_{i,n}$ for all $n\ge 1$. We then have
\[
    1   = \val_{S^i(B)}(0\bigcdot \mathbf{t}_i) 
        = \sum_{n=1}^{+\infty}\frac{t_{i,n}}{\beta_{i-1}\cdots\beta_{i-n}} 
        = \sum_{n=1}^{+\infty}\frac{\gamma_{i-n}r_{i,n-1}-r_{i,n}}{\beta_{i-1}\cdots\beta_{i-n}}.
\]
By using that $r_{i,0}=1$ and by rearranging the terms, we obtain
\[
    0 = \sum_{n=0}^{+\infty}\left(\frac{\gamma_{i-n-1}}{\beta_{i-n-1}}-1\right)\frac{r_{i,n}}{\beta_{i-1}\cdots\beta_{i-n}}.
\]
Grouping the terms $p$ by $p$ and by setting $\delta=\beta_0\cdots\beta_{p-1}$, we then get
\[
    0 = \sum_{j=0}^{p-1}\left(\frac{\gamma_{i-j-1}}{\beta_{i-j-1}}-1\right)\frac{1}{\beta_{i-1}\cdots\beta_{i-j}}\sum_{n=0}^{+\infty}\frac{r_{i,pn+j}}{\delta^n}.
\]
For all $j\in\{0,\ldots,p-1\}$, by setting 
\[
    E_{i,j}=\frac{1}{\beta_{i-1}\cdots\beta_{i-j}}\sum_{n=0}^{+\infty}\frac{r_{i,pn+j}}{\delta^n}
\]
and 
\[
    F_{i,j}=\frac{\gamma_{i-j-1}}{\beta_{i-j-1}}-1,
\]
we can rewrite the latter equality as
\[
    0=\sum_{j=0}^{p-1} E_{i,j} F_{i,j}.
\]

Observe that for $i\in\{0,\ldots,p-1\}$, the row matrices 
\[
    F_i=\begin{pmatrix}F_{i,0} & \cdots &F_{i,p-1} \end{pmatrix}
\]
are circular permutations of one another. In particular, we have 
\[
    F_0=\begin{pmatrix}F_{i,i} & \cdots &F_{i,i+p-1} \end{pmatrix}
\]
where the second indices are seen modulo $p$. We obtain the matrix equality 
\[
    EF_0^\intercal=0
\]
where 
\[
    E=\begin{pmatrix}
        E_{0,0}     & E_{0,1}   & \cdots    & E_{0,p-2}     & E_{0,p-1} \\ 
        E_{p-1,p-1} & E_{p-1,0} & \cdots    & E_{p-1,p-3}   & E_{p-1,p-2} \\
        \vdots      & & & & \\
        E_{1,1}     & E_{1,2}   & \cdots    & E_{1,p-1}     & E_{1,0}
    \end{pmatrix}.
\]
The matrix $E$ has non-negative entries. Let us show that it is diagonally dominant, i.e., that we have 
\begin{equation}
\label{eq:diagonally-dominant}
    E_{i,0}>\sum_{j=1}^{p-1} E_{i,j}
\end{equation}
for all $i\in\{0,\ldots,p-1\}$. 
Indeed, we have $E_{i,0}\geq r_{i,0}=1$ for all $i\in\{0,\ldots,p-1\}$. If $p=1$, this is already enough.
Otherwise, by using the hypothesis that $\beta_i>\alpha$ for all $i\in\{0,\ldots,p-1\}$ and by using that $r_{i,n}\le 1$ for all $n\ge 0$, we obtain that 
\[
    \sum_{j=1}^{p-1}E_{i,j}
    <\sum_{j=1}^{p-1}\frac{1}{\alpha^j}\frac{\alpha^p}{\alpha^p-1}
    =\frac{\sum_{j=1}^{p-1}\alpha^{p-j}}{\alpha^p-1}
    =1
    \leq E_{i,0}
\]
as announced. 

Hadamard's lemma then tells us that $\det(E)\ne 0$, which implies in turn that $F_0=0$. We then get that $\gamma_i=\beta_i$ for all $i\in\{0,\ldots,p-1\}$, that is, $\Gamma=B$.

The proof for the quasi-greedy expansions is similar. The digits of the quasi-greedy expansions $\mathbf{d}_n=d_{n,1}d_{n,2}\cdots$ are obtained by the following quasi-greedy algorithm. We set $r_{i,0}=1$ and for all $n\ge 1$, set $d_{i,n}=\lceil{\gamma_{i-n}r_{i,n-1}}\rceil-1$ and $r_{i,n}=\gamma_{i-n}r_{i,n-1}-d_{i,n}$. Thus, we have $r_{i,n}\in(0,1]$ and $d_{i,n} = \gamma_{i-n}r_{i,n-1}-r_{i,n}$ for all $n\ge 1$. The rest of the proof is easily adapted.
\end{proof}

\begin{corollary} 
\label{cor:uniqueness2}
Let $p\ge 1$ be an integer and let $B$ be an alternate base of length $p$ such that for all $i\in \{0,1,\ldots, p-1\}$, the greedy (resp.\ the quasi-greedy) $S^i(B)$-expansion of~$1$ starts with a digit that is at least 2. Then no other alternate base of length~$p$ has the same list of greedy (resp.\ quasi-greedy) expansions of $1$. 
\end{corollary}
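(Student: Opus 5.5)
The plan is to reduce the statement directly to \Cref{pro:uniqueness}. It suffices to check that the hypothesis on the first digits forces $\beta_i>\alpha$ for every $i\in\{0,\ldots,p-1\}$, where $\alpha$ is the zero in $[1,2)$ of $X^p-X^{p-1}-\cdots-1$. Once this holds, \Cref{pro:uniqueness} gives that no other alternate base of length $p$ (say $\Gamma$) shares the list of greedy (resp.\ quasi-greedy) expansions of $1$. This is because that proposition only places a hypothesis on $B$, not on $\Gamma$.

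\emph{First step: $\alpha<2$.} Set $P(X)=X^p-X^{p-1}-\cdots-1$. Then
\[
P(2)=2^p-(2^p-1)=1>0,
\]
and $P(1)=1-p\le 0$. Moreover $P(X)/X^p=1-\sum_{k=1}^p X^{-k}$ is increasing on $(0,+\infty)$, so $P$ has a unique positive zero. Hence $\alpha\in[1,2)$, and in particular $\alpha<2$.

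\emph{Second step: each base is large.} Fix $i\in\{0,\ldots,p-1\}$ and compare the first digit of the expansion of $1$ in $S^i(B)$ with $\beta_{i-1}$.
\begin{itemize}
\item In the greedy case, the greedy algorithm recalled in the proof of \Cref{pro:uniqueness}, with $r_{i,0}=1$, gives $t_{i,1}=\lfloor\beta_{i-1}\rfloor$. So $t_{i,1}\ge 2$ yields $\beta_{i-1}\ge 2>\alpha$.
\item In the quasi-greedy case, the quasi-greedy algorithm gives $d_{i,1}=\lceil\beta_{i-1}\rceil-1$. So $d_{i,1}\ge 2$ yields $\lceil\beta_{i-1}\rceil\ge 3$, hence $\beta_{i-1}>2>\alpha$.
\end{itemize}
As $i$ runs over $\{0,\ldots,p-1\}$, the index $i-1$ runs over all residues modulo $p$. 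Therefore $\beta_j>\alpha$ for every $j$.

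\emph{Third step: conclusion.} Apply \Cref{pro:uniqueness} to $B$. There is no genuine obstacle here. The only point needing care is that the first digit of $\mathbf{t}_i$ (resp.\ $\mathbf{d}_i$) is governed by $\beta_{i-1}$ rather than $\beta_i$. This index shift is harmless, since it is merely a cyclic relabelling of the bases.
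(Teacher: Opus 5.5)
Your proposal is correct and matches the paper's intended argument. The paper states this corollary without proof as an immediate consequence of \Cref{pro:uniqueness}, since a first digit $t_{i,1}=\lfloor\beta_{i-1}\rfloor\ge 2$ (resp.\ $d_{i,1}=\lceil\beta_{i-1}\rceil-1\ge 2$) forces $\beta_{i-1}\ge 2>\alpha$; your check that $\alpha<2$ via $P(2)=1>0$ and your handling of the cyclic index shift are exactly the details left implicit there.
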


\section{Consequences of a theorem of Furstenberg}
\label{sec:Furstenberg}

In this section, we prepare a tool that we will use to prove the existence of Cantor real bases with prescribed representations of $1$. The origin of the following theorem can be traced back to Furstenberg \cite{Furstenberg:1960}; also see \cite{Birkhoff:1957}. We present this result in a matrix formulation similar to the one often used when dealing with $S$-adic sequences \cite{Berthe&Tijdeman:2004,Berthe&Delecroix:2014,Berthe&Minervino&Steiner&Thuswaldner:2016}. We note, however, that the formulation in these articles relies on previous assumptions which ensure that certain quantities are positive, notably the row matrix $f$ of the next statement. As we do not have such hypotheses, we must work in general with nonnegative quantities. When required, we will argue that the quantities at hand are positive. See \Cref{rk:on-nonnegativity-vs-positivity} for an example where $f$ is only nonnegative.

\begin{theorem}[{\cite{Furstenberg:1960}}]
\label{thm:Furstenberg} 
Let $(A_n)_{n\in\N}$ be a sequence matrices in $\mathbb{N}^{k\times k}$. Assume that there exist a positive matrix $P$ and indices $i_1 < j_1 \leq i_2 < j_2\leq  \cdots $ such that 
\[
    \cdots=A_{j_2-1}\cdots A_{i_2}=A_{j_1-1}\cdots A_{i_1}  = P.
\]
Then there exists a row matrix $f\in\R_{\geq 0}^k$ such that
\[
    \bigcap_{n\ge 0} \R_{\ge 0}^k A_{n-1}\cdots A_0 = \R_{\ge 0} f.
\]
\end{theorem}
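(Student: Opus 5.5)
We use the Hilbert projective metric and Birkhoff's contraction theorem. Write $C_n=\R_{\ge0}^k A_{n-1}\cdots A_0$. Multiplying on the right by a non-negative matrix sends $\R_{\ge0}^k$ into itself, so $C_{n+1}=(\R_{\ge0}^kA_n)A_{n-1}\cdots A_0\subseteq C_n$. The sets $C_n$ therefore form a decreasing sequence of closed convex cones. Each contains a non-zero vector as long as no partial product vanishes, and this holds because each $A_n$ is a factor of a product equal to the positive matrix $P$ up to index $j_m$. More precisely, no $A_n$ with $n<j_m$ has a zero row, since otherwise $P$ would have a zero row. For $n$ beyond all the blocks we use the infinitely many blocks. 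The main object is then the projectivised intersection.

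\emph{Step 1 (non-emptiness).} Intersect each $C_n$ with the simplex $\Delta=\{x\ge0:\sum x_i=1\}$. Every row vector $x\in\R_{\ge0}^k\setminus\{0\}$ has $xA_{n-1}\cdots A_0\neq0$. This is because the product up to $j_m-1$ contains a factor of the form $Q_1PQ_2$ with $Q_1,Q_2$ non-negative and without zero rows or columns, and such a product is non-zero on non-zero non-negative vectors. Hence $C_n\cap\Delta$ is a decreasing sequence of non-empty compact sets, and its intersection is non-empty. This gives at least one ray $\R_{\ge0}f$ in $\bigcap_n C_n$.

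\emph{Step 2 (uniqueness of the ray, the main step).} Let $d_H$ denote Hilbert's projective metric on the interior of the cone. By Birkhoff's theorem, a non-negative matrix $M$ with no zero row or column is non-expanding for $d_H$ on positive vectors. A positive matrix $P$ strictly contracts, with coefficient $\tau(P)=\tanh(\Delta(P)/4)<1$, where $\Delta(P)<\infty$ is the projective diameter of $\R_{>0}^kP$. Every non-zero non-negative vector is sent by $P$ into the positive orthant, with image of diameter at most $\Delta(P)$.

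Consider $C_{j_m}$ for $m\ge2$. It is contained in the image of $\R_{\ge0}^k$ under $(A_{j_m-1}\cdots A_{i_m})\cdots(A_{j_1-1}\cdots A_{i_1})\cdot A_{i_1-1}\cdots A_0$, with intermediate non-negative factors between the blocks. The image of the first block applied (reading right-multiplication from the left) is $\R_{\ge0}^kP$, which has finite diameter $\Delta(P)$. Each further copy of $P$ multiplies the diameter by at most $\tau(P)$, and the intermediate factors and the initial factor $A_{i_1-1}\cdots A_0$ are non-expanding. The non-expansion needs these factors to have no zero rows or columns, which we deduce from their appearing inside products equal to $P$, or we restrict to images of positive vectors, where only the absence of zero rows is needed. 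So $\operatorname{diam}_{d_H}(C_{j_m}\setminus\{0\})\le \Delta(P)\,\tau(P)^{m-1}\to0$.

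The delicate point is the tail factor $A_{i_1-1}\cdots A_0$ and the gaps $A_{i_{m+1}-1}\cdots A_{j_m}$. These may have zero columns, so they could map the positive cone onto a face, where $d_H$ is infinite or degenerate. We handle this by working with the cone $K=\R_{\ge0}^k A_{i_1-1}\cdots A_0$ and using the Hilbert metric relative to $K$: the Birkhoff contraction applies to linear maps between cones, and linear maps never increase the Hilbert metric between the respective cones. This is the step I expect to require the most care.

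\emph{Step 3.} Since the diameters tend to $0$ and the sets are nested, the projectivised intersection is a single point. Hence $\bigcap_n C_n=\R_{\ge0}f$ with $f\in\R_{\ge0}^k$ non-zero, and $f$ is possibly only non-negative because of the initial factor.
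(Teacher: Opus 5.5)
The paper does not prove this statement. It quotes it from Furstenberg, with a pointer to Birkhoff, whose contraction theorem is exactly your tool. Your skeleton is the natural one: nested cones $C_n$, one factor $\tau(P)$ per block, and diameters $\Delta(P)\tau(P)^{m-1}\to 0$. As written, however, the argument rests on a false premise: that every $A_n$ is a factor of a product equal to $P$.

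The hypothesis says nothing about $A_n$ for $n<i_1$ or in the gaps $j_m\le n<i_{m+1}$. These are arbitrary matrices in $\N^{k\times k}$ and may have zero rows, zero columns, or vanish. Even inside a block, only the leftmost factor $A_{j_m-1}$ is forced to have no zero row; for instance, $\left(\begin{smallmatrix}1&0\\1&0\end{smallmatrix}\right)\left(\begin{smallmatrix}1&1\\0&0\end{smallmatrix}\right)$ is positive. Consequently Step 1 is false. Take $i_1=1$, $A_0=0$ and $A_n=P$ for all $n\ge1$. Then every $C_n$ with $n\ge1$ equals $\{0\}$, so the intersection is $\{0\}$ and your final claim $f\neq 0$ fails. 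The statement as written allows $f=0$; the paper obtains $f\neq0$ only in \Cref{pro:ConseqFurstenberg}, from the extra assumption $(A_n)_{1,1}\ge 1$. The repair is a case split. If some $A_{n-1}\cdots A_0$ vanishes, take $f=0$. Otherwise each $C_n$ contains a non-zero row of that product, and your compactness argument gives a non-zero element.

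The same false premise invalidates your first justification that the gap products $G_m=A_{i_{m+1}-1}\cdots A_{j_m}$ and the tail $T=A_{i_1-1}\cdots A_0$ are non-expanding. Also, with your row convention $x\mapsto xG$, keeping positive vectors positive requires $G$ to have no zero \emph{column}, not no zero row. Your fallback is the right idea and should replace that justification:
\begin{itemize}
\item Use Birkhoff's projective metric on comparable pairs of the closed cone, i.e.\ $x,y\ge 0$ with a common support $S$, with $d(x,y)=\log\bigl(\max_{i\in S}\frac{x_i}{y_i}\cdot\max_{i\in S}\frac{y_i}{x_i}\bigr)$.
\item Since $x\le\lambda y$ implies $xG\le\lambda yG$, every non-negative $G$ sends comparable pairs to comparable pairs (or both to $0$), with $d(xG,yG)\le d(x,y)$.
\item Birkhoff's theorem, applied to $P$ restricted to a face, gives $d(xP,yP)\le\tau(P)\,d(x,y)$ for all comparable pairs.
\end{itemize}
In the non-degenerate case, every non-zero element of $C_{j_m}$ has the form $xPG_{m-1}P\cdots G_1PT$ with $xP>0$. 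These elements are therefore pairwise comparable, with $d$-diameter at most $\Delta(P)\tau(P)^{m-1}$. Hence two non-zero elements of $\bigcap_n C_n$ are at distance $0$, so they are proportional, and no change of cone is needed.

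Alternatively, group each gap with the following copy of $P$. If $G_m\neq 0$, then $G_mP$ has no zero column and $\tau(G_mP)\le\tau(P)$. Then apply $T\neq 0$ last and pass to the limit by compactness. This works because the limiting ray of $\R_{\ge0}^kPG_{m-1}\cdots G_1P$ is positive and therefore not killed by $T$.
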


\begin{proposition}
\label{pro:ConseqFurstenberg} 
Let $(A_n)_{n\in \Z}$ be a sequence of non-negative integer square matrices such that $(A_n)_{1,1}\ge 1$ for all $n\in\Z$ and such that $(A_n)_{n\in \N}$ satisfies the hypotheses of \Cref{thm:Furstenberg}. Then there exist unique sequences $(\gamma_n)_{n\in\Z}$ of positive real numbers and $(f_n)_{n\in\Z}$ of non-negative real row matrices with first entry equal to $1$ such that 
\begin{equation}
\label{eq:recurrence}
    \gamma_n f_{n-1} = f_n A_n
\end{equation}
for all $n\in\Z$. Moreover, if the sequence $(A_n)_{n\in \Z}$ is periodic, then the sequences $(\gamma_n)_{n\in\Z}$ and $(f_n)_{n\in\Z}$ are periodic with the same period. 
\end{proposition}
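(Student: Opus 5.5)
The plan is to apply \Cref{thm:Furstenberg} not only to $(A_n)_{n\in\N}$ but to every shifted sequence $(A_{n+m})_{n\in\N}$ for $m\in\Z$. The hypothesis transfers to all these shifts. The blocks $A_{j_k-1}\cdots A_{i_k}=P$ occur at indices $i_k\to+\infty$, and for any $m$ we may discard the finitely many blocks with $i_k<m$. Hence for each $m\in\Z$ there is a non-negative row $g_m$ with
\[
    \bigcap_{n\ge 0}\R_{\ge 0}^k A_{m+n-1}\cdots A_m=\R_{\ge0} g_m .
\]
We will, however, read the recurrence $\gamma_n f_{n-1}=f_nA_n$ as saying that $f_{n-1}$ lies in the image cone of $A_n$ applied on the right. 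The intersection should therefore be written in the order in which $f_{m-1}\in \R_{\ge0}^k A_{m+N}\cdots A_m$ is obtained by iterating the recurrence. Indeed, iterating gives $f_{m-1}=\bigl(\prod_{l=m}^{m+N}\gamma_l\bigr)^{-1} f_{m+N}A_{m+N}\cdots A_m$, so $f_{m-1}$ lies in every cone $C_m:=\bigcap_{N}\R_{\ge0}^kA_{m+N}\cdots A_m$. This matches the form of \Cref{thm:Furstenberg}, so $C_m=\R_{\ge0}g_m$ is a ray.

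\emph{Uniqueness.} Since $f_{m-1}\in C_m$ is nonzero (its first entry is $1$), it is a positive multiple of $g_m$. In particular $g_m$ has non-zero first entry, and the normalisation forces $f_{m-1}=g_m/(g_m)_1$. Then $\gamma_n$ is determined by comparing first entries in $\gamma_nf_{n-1}=f_nA_n$.

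\emph{Existence.} We first show that $g_m$ has positive first entry. Take a block $P=A_{j-1}\cdots A_i$ with $i\ge m$. Every nonzero vector of $C_m$ is of the form $vA_{j-1}\cdots A_m=(vP)A_{i-1}\cdots A_m$ with $v\ge0$ nonzero, and $vP$ is then strictly positive. Since $(A_l)_{1,1}\ge1$ for every $l$, the product $A_{i-1}\cdots A_m$ has $(1,1)$ entry at least $1$, so the first entry of $vP\,A_{i-1}\cdots A_m$ is positive. (When $i=m$ the product is empty and $vP$ itself is positive.)

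Next we show that $C_{m+1}A_m= C_m$. The inclusion $\subseteq$ is immediate. For $\supseteq$, a compactness argument is needed. Take $x\in C_m$ normalised. For each $N$ write $x=v_NA_{m+N}\cdots A_{m+1}A_m$ and set $w_N:=v_NA_{m+N}\cdots A_{m+1}$. The $w_N$ are bounded, because their first entries are controlled: $(w_NA_m)_1\ge (w_N)_1$, and the positivity arguments above bound the remaining entries using a block $P$. A limit point $w$ of the $w_N$ then lies in $C_{m+1}$ (each cone in the intersection is closed) and satisfies $wA_m=x$. Actually it suffices to note that $g_{m+1}A_m\in C_m$ is nonzero, since its first entry is at least $(g_{m+1})_1>0$. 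Because $C_m$ is a ray, this gives $g_{m+1}A_m=\lambda g_m$ with $\lambda>0$, so no compactness argument is needed.

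Setting $f_{m-1}=g_m/(g_m)_1$, we get $f_nA_n=\gamma_nf_{n-1}$ with $\gamma_n=(f_nA_n)_1\ge (f_n)_1=1>0$.

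\emph{Periodicity.} If $A_{n+p}=A_n$ for all $n$, then the shifted sequences $(f_{n+p})$ and $(\gamma_{n+p})$ satisfy the same recurrence with the same normalisation, so by uniqueness they coincide with $(f_n)$ and $(\gamma_n)$.

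The main obstacle I expect is getting the correct orientation of the matrix products relative to \Cref{thm:Furstenberg}. That theorem is stated for products $A_{n-1}\cdots A_0$ with the newest matrix on the left. Our cone $C_m$ instead has the newest matrix on the left in increasing index, $A_{m+N}\cdots A_m$, which is the same pattern after reindexing $A'_n=A_{m+n}$. So it applies, provided the block equalities $A_{j_k-1}\cdots A_{i_k}=P$ are read in that same order, which they are.
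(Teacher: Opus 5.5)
Your proof is correct and follows essentially the paper's route: Furstenberg's theorem applied to every shift, the observation that $g_{m+1}A_m$ is a nonzero element of the ray $C_m$ because $(A_m)_{1,1}\ge 1$, and normalisation by the first entry. It also improves on the paper in two mild ways: you prove first-entry positivity directly for every $m$ using a block $P$ beyond $m$, rather than propagating it backwards from the positive rows $f_{i_r-1}$; and you actually prove uniqueness and periodicity, which the paper leaves implicit.

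The one point to add is that $C_m\neq\{0\}$, since Furstenberg's statement as quoted does not by itself exclude $g_m=0$. This follows, as in the paper, from compactness of the nested norm-one slices of the closed cones $\R_{\ge 0}^kA_{m+N}\cdots A_m$, each of which is nonempty because $(A_{m+N}\cdots A_m)_{1,1}\ge 1$.
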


\begin{proof}
For all $n\in\Z$, since the sequence $(A_{n+1+i})_{i\in\N}$ also satisfies the assumptions of \Cref{thm:Furstenberg}, there exists a non-negative real row matrix $f_n$ such that  
\begin{equation}
\label{eq:f_n}
    \bigcap_{i\ge 0} \R_{\ge 0}^k A_{n+i}\cdots A_{n+1}=\R_{\ge 0} f_n.
\end{equation}
Write $f_n=\begin{pmatrix}
    f_{n,1} & \cdots & f_{n,k}
\end{pmatrix}$.
We begin by proving that the row matrices $f_{i_r-1}$ are positive for all $r\ge 1$, where $(i_r)_{r\ge 1}$ is the sequence of indices from \Cref{thm:Furstenberg}. Indeed, we have $\bigcap_{i\ge 0} \R_{\ge 0}^k A_{i_r+i}\cdots A_{i_r}= \bigcap_{i\ge 0} \R_{\ge 0}^k A_{j_r+i}\cdots A_{j_r} P$, which is an intersection of a decreasing sequence of sets. Since $(A_n)_{1,1}\ge 1$ for all $n$, each product contains a row matrix with positive first entry.
If we consider only row matrices of norm $1$ in each product, we obtain a decreasing sequence of nonempty compact sets, which must have a nonempty intersection. Thus the intersection contains a nonzero element, which in particular belongs to $\R_{\ge 0}^k P$. But this space only contains the zero row matrix and row matrices in $\R_{>0}^k$. Thus $f_{i_r-1}$ is positive, as claimed.

Multiplying each side of \eqref{eq:f_n} by $A_n$ leads to $\R_{\ge 0} f_{n-1} = \R_{\ge 0} f_nA_n$. Now, since $(A_n)_{1,1}\ge 1$, we have that $f_{n,1}>0$ implies $f_{n-1,1}>0$. We deduce that $f_{n,1}>0$ for all $n\in\Z$. Without loss of generality, we may assume that this first entry of $f_n$ is $1$ for all $n\in\Z$.
Since $f_nA_n\in \R_{\geq 0}f_{n-1}$, we find that there exists $\gamma_n\geq 0$ such that $\gamma_n f_{n-1} = f_n A_n$. Given that the first entries of $f_n$ and $f_{n-1}$ are positive, the numbers $\gamma_n$ must be positive.
\end{proof}

\begin{proposition}
\label{pro:EventPeriodic}  
Let $(A_n)_{n\in \Z}$ be a sequence of matrices in $\N^{k\times k}$, with $k\ge 2$, such that $(A_n)_{n\in \N}$ satisfies the assumptions of \Cref{pro:ConseqFurstenberg}. Suppose moreover that there exists $h\in\{1,\ldots,k-1\}$ such that all matrices $A_n$ are of the form
\[
\begin{pNiceArray}{ccc|c}
  a_{n,1} & \cdots & a_{n,k-1} & a_{n,k} \\
  \hline
  \Block{3-3}<\Large>{\mathbf{I}} & \\
  &&& \mathbf{e}_h \\
  &&&
\end{pNiceArray}
\]
where $a_{n,1}\ge 1$, $\mathbf{I}$ is the identity matrix of size $k-1$ and $\mathbf{e}_h$ is the column matrix of size $k-1$ having $1$ in position $h$ and $0$ elsewhere. Otherwise stated, we have
\[
    (A_n)_{i,j}=\begin{cases}
    a_{n,j}, & \text{if }i=1; \\
    1,       & \text{if }i=j+1; \\
    1,       & \text{if }i=h+1 \text{ and }j=k;\\
    0,       & \text{otherwise}
    \end{cases}
\]
with the condition $a_{n,1}\ge 1$. Let $(\gamma_n)_{n\in\Z}$ and $(f_n)_{n\in\Z}$ be the sequences from \Cref{pro:ConseqFurstenberg}, and write $f_n=\begin{pmatrix}
    f_{n,1} & \cdots & f_{n,k}
\end{pmatrix}$.
Then for all $n\in\Z$, we have
\begin{enumerate}
    \item \label{item1} $\gamma_n>1$
    \item \label{item2} $1=\displaystyle{\sum_{j=1}^h} \frac{a_{n+j,j}}{\gamma_{n+1}\cdots\gamma_{n+j}}+\frac{f_{n+h,h+1}}{\gamma_{n+1}\cdots\gamma_{n+h}}$
    \item \label{item3} $f_{n+h,h+1}=\displaystyle{\sum_{j=h+1}^k} \frac{a_{n+j,j}}{\gamma_{n+h+1}\cdots\gamma_{n+j}}+\frac{f_{n+k,h+1}}{\gamma_{n+h+1}\cdots\gamma_{n+k}}$.
\end{enumerate}
\end{proposition}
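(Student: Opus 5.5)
The plan is to write out the recurrence \eqref{eq:recurrence}, $\gamma_n f_{n-1}=f_nA_n$, entrywise using the explicit shape of $A_n$, and then telescope.

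\textbf{Step 1: the entrywise relations.} Column $j$ of $A_n$ has entry $a_{n,j}$ in row $1$ and entry $1$ in row $j+1$, for $j\le k-1$. Column $k$ has $a_{n,k}$ in row $1$ and $1$ in row $h+1$. Hence:
\[
    \gamma_n f_{n-1,j} = a_{n,j} + f_{n,j+1} \quad (1\le j\le k-1),
    \qquad
    \gamma_n f_{n-1,k} = a_{n,k}+f_{n,h+1},
\]
where we used $f_{n,1}=1$. Taking $j=1$ and using $f_{n-1,1}=1$ gives $\gamma_n = a_{n,1}+f_{n,2}$.

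\textbf{Step 2: item \eqref{item1}.} From $\gamma_n=a_{n,1}+f_{n,2}\ge 1$, it suffices to show $f_{n,2}>0$. I would aim to show that $f_n$ is in fact positive for every $n$. The proof of \Cref{pro:ConseqFurstenberg} already gives positivity at the indices $i_r-1$. Positivity then propagates downward: if $f_n>0$, then $f_nA_n$ is positive in every column, since each column of $A_n$ contains a nonzero entry. Thus $f_{n-1}=\gamma_n^{-1}f_nA_n>0$. Since $i_r\to\infty$, every $n$ lies below some $i_r-1$, so $f_n>0$ for all $n$ and hence $\gamma_n>1$. This positivity argument is the only real obstacle. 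The point to check carefully is that every column of $A_n$ is nonzero, which is true because of the identity block and $\mathbf e_h$.

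\textbf{Step 3: item \eqref{item2}.} Rewrite the relation of Step 1 as $f_{m-1,j}=\frac{a_{m,j}+f_{m,j+1}}{\gamma_m}$. Start from $1=f_{n,1}$, apply this with $m=n+1$ and $j=1$ to get $\frac{a_{n+1,1}}{\gamma_{n+1}}+\frac{f_{n+1,2}}{\gamma_{n+1}}$. Then expand $f_{n+1,2}$ with $m=n+2$, $j=2$, and so on up to $j=h$. Each step divides by the next $\gamma$, which produces exactly the sum in \eqref{item2} with remainder $f_{n+h,h+1}/(\gamma_{n+1}\cdots\gamma_{n+h})$. This is a finite induction on the number of expansions.

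\textbf{Step 4: item \eqref{item3}.} Proceed in the same way starting from $f_{n+h,h+1}$, expanding successively for $j=h+1,\dots,k-1$, with $m=n+j$ at each stage. This gives the terms $a_{n+j,j}/(\gamma_{n+h+1}\cdots\gamma_{n+j})$ and a remainder $f_{n+k-1,k}/(\gamma_{n+h+1}\cdots\gamma_{n+k-1})$. The last step uses the column-$k$ relation $\gamma_{n+k}f_{n+k-1,k}=a_{n+k,k}+f_{n+k,h+1}$. It contributes the $j=k$ term and the remainder $f_{n+k,h+1}/(\gamma_{n+h+1}\cdots\gamma_{n+k})$, as claimed. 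Both telescopings are routine bookkeeping.
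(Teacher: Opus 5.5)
Your proposal is correct and follows the paper's proof essentially step for step. The paper also first obtains $f_n>0$ for all $n$ by propagating positivity downward from the indices $i_r-1$, using that every column of $A_n$ is nonzero. It then reads off $\gamma_n=a_{n,1}+f_{n,2}>1$ from the first entry of $\gamma_n f_{n-1}=f_nA_n$, and derives items (2) and (3) by the same telescoping of the entrywise relations; your iterated substitution is just the paper's divide, sum and cancel, written in the other direction.
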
   

\begin{proof} 
We first prove that the row matrices $f_n$ are positive for all $n\in\Z$. As in the proof of \Cref{pro:ConseqFurstenberg}, if $(i_r)_{r\geq 1}$ is the sequence of indices from \Cref{thm:Furstenberg}, then the row matrices $f_{i_r-1}$ are all positive. Since the matrices $A_n$ have a positive entry in each column, any product $f A_n$ where $f$ is a positive row matrix is itself a positive row matrix. Since $\gamma_n f_{n-1}= f_nA_n$, we see that $f_n$ being positive implies that $f_{n-1}$ also is. So we can conclude that $f_n>0$ for all $n$.

Consider now $j \in \{1,\ldots,k\}$. 
Looking at the $j^{\rm th}$ component of \eqref{eq:recurrence} and realizing that $f_{n,1}=1$, we find that
\[
    \gamma_nf_{n-1,j} = 
    \begin{cases}
        a_{n,j} + f_{n,j+1},  & \text{if } j<k;\\
        a_{n,k} + f_{n,h+1},    & \text{if } j=k
    \end{cases}
\]
for all $n\in\Z$. In particular, for $j=1$, we get that $\gamma_n=a_{n,1}+f_{n,2} >1$, which is \Cref{item1}. Let $n\in\Z$. The previous equality taken in $n+j$ instead of $n$ yields 
\begin{equation}
\label{eq:BetaII}  
    \gamma_{n+j}f_{n+j-1,j} = 
    \begin{cases}
        a_{n+j,j} + f_{n+j,j+1},  & \text{if } j<k;\\
        a_{n+j,k} + f_{n+j,h+1},    & \text{if } j=k.
    \end{cases}
\end{equation} 
For $j=1,\ldots,h$, we divide the equality \eqref{eq:BetaII} by $\gamma_{n+1}\cdots \gamma_{n+j}$ and sum up. We obtain
\[
    \sum_{j=1}^h\frac{f_{n+j-1,j}}{\gamma_{n+1}\cdots \gamma_{n+j-1}}
    =\sum_{j=1}^h\frac{a_{n+j,j}}{\gamma_{n+1}\cdots \gamma_{n+j}}
    +\sum_{j=1}^h\frac{f_{n+j,j+1}}{\gamma_{n+1}\cdots \gamma_{n+j}},
\]
which gives
\[
    1+\sum_{j=2}^h\frac{f_{n+j-1,j}}{\gamma_{n+1}\cdots \gamma_{n+j-1}}
    =\sum_{j=1}^h\frac{a_{n+j,j}}{\gamma_{n+1}\cdots \gamma_{n+j}}
    +\sum_{j=2}^{h+1}\frac{f_{n+j-1,j}}{\gamma_{n+1}\cdots \gamma_{n+j-1}}.
\]
Deleting identical terms, we obtain \Cref{item2}. For $j=h+1,\ldots,k$, we divide the equality \eqref{eq:BetaII} by $\gamma_{n+h+1}\cdots \gamma_{n+j}$ and sum up. We obtain 
\[
    \sum_{j=h+1}^k\frac{f_{n+j-1,j}}{\gamma_{n+h+1}\cdots \gamma_{n+j-1}}
    =\sum_{j=h+1}^k\frac{a_{n+j,j}}{\gamma_{n+h+1}\cdots \gamma_{n+j}}
    +\sum_{j=h+1}^{k-1}\frac{f_{n+j,j+1}}{\gamma_{n+h+1}\cdots \gamma_{n+j}}
    +\frac{f_{n+k,h+1}}{\gamma_{n+h+1}\cdots \gamma_{n+k}}.
\]
Simplifying as in the previous case, we obtain \Cref{item3}.
\end{proof}

The following result will be used in \Cref{sec:specific} in order to show that all $S$-adic sequences obtained by using some well-identified family of substitutions can be obtained as the faithful coding of the $B$-integers for some Cantor real base $B$.

\begin{proposition}
\label{pro:FiniteExpansion} 
Let $(A_n)_{n\in \Z}$ be a sequence of matrices in $\N^{k\times k}$, with $k\ge 2$, such that $(A_n)_{n\in \N}$ satisfies the assumptions of \Cref{pro:ConseqFurstenberg}. Suppose moreover that the matrices $A_n$ are of the form
\[
    \begin{pNiceArray}{ccc|c}
    a_{n,1} & \cdots & a_{n,k-1} & a_{n,k} \\
    \hline
    \Block{3-3}<\Large>{\mathbf{I}} & \\
    &&& \mathbf{0} \\
    &&&
    \end{pNiceArray}
\]
where $a_{n,1}\ge 1$, $\mathbf{I}$ is the identity matrix of size $k-1$ and $\mathbf{0}$ is the zero column matrix of size $k-1$. Otherwise stated, we have
\[
    (A_n)_{i,j}=\begin{cases}
    a_{n,j}, & \text{if }i=1; \\
    1,       & \text{if }i=j+1; \\
    0,       & \text{otherwise}
    \end{cases}
\]
with the condition $a_{n,1}\ge 1$. Let $(\gamma_n)_{n\in\Z}$ be the sequence from \Cref{pro:ConseqFurstenberg}.
Then for all $n\in\Z$, we have
\begin{enumerate}
    \item \label{item1:finiteExpansion} $\gamma_n\ge 1$
    \item \label{item2:finiteExpansion} $1=\displaystyle{\sum_{j=1}^k} \frac{a_{n+j,j}}{\gamma_{n+1}\cdots\gamma_{n+j}}$.
\end{enumerate}
Assuming moreover that $a_{n,k}\ge 1$ for all $n\in\Z$, then we have $\gamma_n>1$ for all $n\in\Z$.
\end{proposition}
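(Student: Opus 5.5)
The plan is to mirror the proof of \Cref{pro:EventPeriodic}, reading off the recurrence \eqref{eq:recurrence} componentwise and using $f_{n,1}=1$. Since the last column of $A_n$ is now $(a_{n,k},0,\ldots,0)^\intercal$, the $j$-th component of $\gamma_n f_{n-1}=f_nA_n$ reads
\[
    \gamma_n f_{n-1,j}=\begin{cases} a_{n,j}+f_{n,j+1}, & \text{if } j<k;\\ a_{n,k}, & \text{if } j=k.\end{cases}
\]
Here positivity of all $f_n$ is no longer automatic (the columns need not all have a positive entry, which is why only $\gamma_n\ge 1$ is claimed), so I will only use $f_n\ge 0$ and $f_{n,1}=1$, both given by \Cref{pro:ConseqFurstenberg}.

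For \Cref{item1:finiteExpansion}, take $j=1$: $\gamma_n=\gamma_n f_{n-1,1}=a_{n,1}+f_{n,2}\ge a_{n,1}\ge 1$. For \Cref{item2:finiteExpansion}, fix $n$, write the displayed identity at index $n+j$ for $j=1,\ldots,k$, namely $\gamma_{n+j}f_{n+j-1,j}=a_{n+j,j}+f_{n+j,j+1}$ for $j<k$ and $\gamma_{n+k}f_{n+k-1,k}=a_{n+k,k}$, divide the $j$-th one by $\gamma_{n+1}\cdots\gamma_{n+j}$ (legitimate since all $\gamma$'s are positive) and sum. The left side is $\sum_{j=1}^k f_{n+j-1,j}/(\gamma_{n+1}\cdots\gamma_{n+j-1})$, while the right side contains $\sum_{j=1}^{k}a_{n+j,j}/(\gamma_{n+1}\cdots\gamma_{n+j})$ plus the telescoping terms $\sum_{j=2}^{k} f_{n+j-1,j}/(\gamma_{n+1}\cdots\gamma_{n+j-1})$. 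Cancelling leaves only the $j=1$ term $f_{n,1}=1$ on the left, giving the claimed identity. Compared with \Cref{pro:EventPeriodic}, the zero column kills the remainder term, so the sum terminates.

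For the last claim, assume $a_{n,k}\ge1$ for all $n$. Then $f_{n-1,k}=a_{n,k}/\gamma_n>0$ for every $n$, and a backward induction on $j$ using $\gamma_n f_{n-1,j}=a_{n,j}+f_{n,j+1}\ge f_{n,j+1}$ shows that $f_{n-1,j}>0$ for all $j$ and $n$. In particular $f_{n,2}>0$, so $\gamma_n=a_{n,1}+f_{n,2}>1$. (Alternatively, \Cref{item2:finiteExpansion} with the positive last term gives $a_{n+1,1}/\gamma_{n+1}<1$.)

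The only delicate point is bookkeeping: the positivity argument used in \Cref{pro:EventPeriodic} does not transfer directly, so $\gamma_n>1$ must come from the hypothesis $a_{n,k}\ge1$ through the chain of positivity described above rather than from positivity of $f_n$ itself.
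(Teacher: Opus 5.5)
Your proof is correct, and for \Cref{item1:finiteExpansion} and \Cref{item2:finiteExpansion} it matches the paper's argument. In both, you read off the components of $\gamma_n f_{n-1}=f_nA_n$ using $f_{n,1}=1$, take $j=1$, and telescope.

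For the strict inequality under $a_{n,k}\ge 1$, your route differs slightly and is more self-contained.

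\textbf{The paper's route.} It notes that every column of $A_n$ then has a positive entry and reuses the argument of \Cref{pro:EventPeriodic}. The row matrices $f_{i_r-1}$ are positive because of the positive product $P$ from \Cref{thm:Furstenberg}, and positivity propagates backwards from $f_n$ to $f_{n-1}$.

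\textbf{Your route.} You get positivity purely from the recurrence, starting from $f_{n-1,k}=a_{n,k}/\gamma_n>0$ and descending in the coordinate index. Your alternative is even more direct: \Cref{item2:finiteExpansion} has a positive last term, which forces $a_{n+1,1}/\gamma_{n+1}<1$. Either way you use only $f_n\ge 0$, $f_{n,1}=1$ and $\gamma_n>0$, and never the positive matrix $P$.

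\textbf{One inaccuracy in your closing remark.} Under the extra hypothesis $a_{n,k}\ge 1$, the positivity argument of \Cref{pro:EventPeriodic} does transfer verbatim; that is exactly the paper's proof. Also, your chain of inequalities does prove $f_n>0$ itself, so $\gamma_n>1$ does come from positivity of $f_n$. What genuinely fails is only the general case without $a_{n,k}\ge 1$, where $\gamma_n=1$ can occur (cf.\ \Cref{rk:on-nonnegativity-vs-positivity}).
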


\begin{proof} 
Let $j\in \{1,\ldots,k\}$. Looking at the $j^{\rm th}$ component of the equality \eqref{eq:recurrence} and realizing that $f_{n,1}=1$, we see that
\[   
    \gamma_n f_{n-1,j}= 
    \begin{cases}
        a_{n,j} + f_{n,j+1},  & \text{if } j<k; \\
        a_{n,k},              & \text{if } j=k.
    \end{cases}
\]
for all $n\in\Z$. In particular, for $j=1$, we find $\gamma_n=a_{n,1}+f_{n,2}\ge 1$, which is \Cref{item1:finiteExpansion}. The latter equality in $n+j$ yields
\begin{equation}
\label{eq:BetaI}
    \gamma_{n+j} f_{n+j-1,j}= 
    \begin{cases}
        a_{n+j,j} + f_{n+j,j+1},  & \text{if } j<k; \\
        a_{n+k,k},                & \text{if } j=k.
    \end{cases}
\end{equation} 
For $j=1,\ldots,k$, dividing \Cref{eq:BetaI} by $\gamma_{n+1}\cdots \gamma_{n+j}$ and summing up, we find
\[
    \sum_{j=1}^k \frac{f_{n+j-1,j}}{\gamma_{n+1}\cdots \gamma_{n+j-1}}
    =\sum_{j=1}^k \frac{a_{n+j,j}}{\gamma_{n+1}\cdots \gamma_{n+j}}
    +\sum_{j=1}^{k-1} \frac{f_{n+j,j+1}}{\gamma_{n+1}\cdots \gamma_{n+j}}.
\]
Deleting identical terms then gives \Cref{item2:finiteExpansion}.

If moreover, we have $a_{n,k}\ge 1$ for all $n\in\Z$, then the matrices $A_n$ all have a positive entry in each column. Arguing as in the beginning of the proof of \Cref{pro:EventPeriodic}, we obtain that $f_n>0$ for all $n$. Then, going back to $\gamma_n=a_{n,1}+f_{n,2}$, we can now deduce that $\gamma_n>1$ for all $n$.
\end{proof}

\begin{remark}\label{rk:on-nonnegativity-vs-positivity}
It is possible to have $\gamma_n=1$ if the condition $a_{n,k}\ge 1$ is not satisfied for all $n$. Consider the matrices
\[
A_2=
\begin{pmatrix}
   1 & 1 & 0\\
   1 & 0 & 0\\
   0 & 1 & 0\\
\end{pmatrix}
,\quad
A_1=
\begin{pmatrix}
   1 & 0 & 1\\
   1 & 0 & 0\\
   0 & 1 & 0\\
\end{pmatrix}
,\quad
A_0=
\begin{pmatrix}
   1 & 1 & 1\\
   1 & 0 & 0\\
   0 & 1 & 0\\
\end{pmatrix},
\]
and define $A_n=A_{n\bmod 3}$ for all $n\in\Z$. This sequence of matrices satisfies the assumptions of \Cref{thm:Furstenberg} since $A_2A_1A_0$ is positive. Nevertheless, using \Cref{item2:finiteExpansion} for $n=-1$, we find that 
\[
1=\frac{a_{0,1}}{\gamma_0}+\frac{a_{1,2}}{\gamma_0\gamma_1}+\frac{a_{2,3}}{\gamma_0\gamma_1\gamma_2},
\]
so $1=\frac{1}{\gamma_0}$ and $\gamma_0=1$. We can find $\gamma_2=\frac{1+\sqrt{17}}{2}$ and $\gamma_1=\frac{3+\sqrt{17}}{4}$ with similar methods. From there, the equalities \eqref{eq:BetaI} allow us to compute for instance 
\[
    f_{0}=\begin{pmatrix}
        1 & 0 &\frac{-3+\sqrt{17}}{2}
        \end{pmatrix}.
\]
\end{remark}

\section{Existence of an alternate base with given representations of 1}
\label{sec:existence}

Given $p$ sequences of digits, we aim to show that there exists an alternate base of length $p$ for which these sequences evaluate to $1$. We start with a lemma in order to be able to make use of Furstenberg's result.

\begin{lemma}
\label{lem:ExistenceParryFurstenbergOK}
Let $p\ge 1$ and for each $i\in\{0,\ldots,p-1\}$, let $\mathbf{a}_i=a_{i,1}a_{i,2}\cdots$ be an ultimately periodic sequence of non-negative integer digits, not starting in $0$ and not ending in $0^\omega$. Let $M,N$ be such that the $p$ sequences $\mathbf{a}_0,\ldots,\mathbf{a}_{p-1}$ all have preperiod $Mp$ and period $Np$. Then the sequence $(A_n)_{n\in\Z}$ of square matrices of size $(M+N)p$ defined by
\[
    (A_n)_{i,j}=
    \begin{cases}
        a_{j-n,j},     & \text{if }i=1;\\
        1,               & \text{if }i=j+1;\\
        1,               & \text{if }i=Mp+1 \text{ and } j=(M+N)p;\\
        0,               & \text{otherwise},
    \end{cases}
\]
where the first index in the digits $a_{m,j}$ is considered modulo $p$, is such that $(A_n)_{n\in\N}$ satisfies the assumptions of \Cref{pro:ConseqFurstenberg}.
\end{lemma}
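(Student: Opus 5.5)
The plan is to verify the two hypotheses of \Cref{pro:ConseqFurstenberg} directly: that $(A_n)_{1,1}\ge 1$ for all $n$, and that $(A_n)_{n\in\N}$ satisfies the hypotheses of \Cref{thm:Furstenberg}. The first is immediate. We have $(A_n)_{1,1}=a_{1-n,1}$, and each $\mathbf{a}_i$ has a non-zero first digit, so $a_{1-n,1}\ge 1$. Nonnegativity and integrality of the entries are also clear.

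For the Furstenberg hypothesis I would use periodicity. The entry $a_{j-n,j}$ has its first index read modulo $p$, so $A_{n+p}=A_n$ for all $n$. Set $Q=A_{p-1}\cdots A_0$. Then every product $A_{(r+1)Lp-1}\cdots A_{rLp}$ equals $Q^L$. It therefore suffices to find $L$ with $Q^L$ positive, and to take $P=Q^L$ with $i_r=(r-1)Lp$ and $j_r=rLp$. Since $Q_{1,1}\ge (A_{p-1})_{1,1}\cdots(A_0)_{1,1}\ge 1$, the matrix $Q$ has a positive diagonal entry. So by the standard Perron--Frobenius argument it is enough to show that $Q$ is irreducible, i.e.\ that its digraph on $\{1,\ldots,(M+N)p\}$ is strongly connected; primitivity then follows.

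To analyse the digraph, I would read $(A_n)_{i,j}>0$ as a time-labelled edge $i\to j$ at time $n$. The edges are of three kinds:
\begin{enumerate}
\item descending edges $j+1\to j$, present at every time;
\item the edge $Mp+1\to(M+N)p$, present at every time;
\item jump edges $1\to j$ at time $n$, present exactly when $a_{(j-n)\bmod p,\,j}>0$.
\end{enumerate}
The loop $1\to 1$ is present at every time. Positivity of $(Q^L)_{i,j}$ amounts to a path from $i$ to $j$ of length exactly $Lp$ that starts at time $0$.

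The connectivity argument then runs as follows. From any $i$, descend to $1$ in $i-1$ steps, and wait at $1$ as long as needed. Since $\mathbf{a}_r$ does not end in $0^\omega$ and its period block occupies positions $Mp+1,\ldots,(M+N)p$, each $\mathbf{a}_r$ has a non-zero digit at some position $j_r$ in that block. Jumping $1\to j_r$ at a time $t\equiv j_r-r \pmod p$ is allowed. From $j_r$, descend to $Mp+1$, take the edge to $(M+N)p$, and descend to any target $j$. Going around the cycle $Mp+1\to(M+N)p\to\cdots\to Mp+1$, of length $Np$, gives further padding.

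The main obstacle is the synchronisation modulo $p$. The waiting time at $1$ is free, but once the jump time is fixed, the total length is fixed modulo $p$, while the jump itself is only legal at times $t\equiv j_r-r\pmod p$. I would handle this by using the freedom in choosing $r$ (equivalently, the residue of $t$), together with an intermediate return to $1$ through a nonzero digit, to adjust the phase. Concretely, I would try to show that the set of lengths of closed walks through $1$ realisable at a given starting phase generates all residues modulo $p$. The loop of length $1$ at every time should make this work once combined with a jump of suitable length. If this phase bookkeeping becomes awkward, the fallback is to drop primitivity of $Q$ and instead exhibit directly some finite product $A_{m-1}\cdots A_0$ with $m\equiv 0\pmod p$ that is positive. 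For this I would build, for each pair $(i,j)$, a path of a common length $m$, using the loop at $1$ as the only length adjuster. Periodicity then lets this single product play the role of $P$.
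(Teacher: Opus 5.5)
\textbf{There is a genuine gap.} Your preliminary reductions are correct: $(A_n)_{1,1}=a_{(1-n)\bmod p,1}\ge 1$ and $A_{n+p}=A_n$, so one positive product $A_{m-1}\cdots A_0$ with $p\mid m$ suffices, and irreducibility of $Q$ together with $Q_{1,1}>0$ would give primitivity.

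However, the synchronisation modulo $p$, which you call the main obstacle, is the only non-trivial content of the lemma, and you leave it unresolved. The mechanism you suggest does not touch the real constraint. The loop at $1$ already gives closed walks through $1$ of every length at every phase, so nothing is gained from them or from intermediate returns to $1$. What is constrained is \emph{which} sequence $\mathbf{a}_r$ supplies the digit at the \emph{last} jump.

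Your time convention is also reversed. In $(Q^L)_{i,j}=(A_{Lp-1}\cdots A_0)_{i,j}$, the edge leaving $i$ carries time $Lp-1$ and the edge entering $j$ carries time $0$. A path that starts at time $0$ instead computes $(A_0A_1\cdots A_{Lp-1})_{i,j}$, and that product can fail to be positive for every $L$. For instance, take $p=3$, $M=N=1$, $\mathbf{a}_0=100(100)^\omega$, $\mathbf{a}_1=100(001)^\omega$, $\mathbf{a}_2=100(010)^\omega$. The first rows of $A_0,A_1,A_2$ are then $(1,0,0,0,1,0)$, $(1,0,0,1,0,0)$ and $(1,0,0,0,0,1)$. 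One checks that $A_0A_1\cdots A_{m-1}\mathbf{e}_2=\mathbf{e}_5$ for every positive $m\equiv 0\pmod 3$. So with your bookkeeping the obstacle is genuinely insurmountable, not just awkward.

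The missing idea is an invariant that makes the synchronisation automatic. Every edge other than a jump out of $1$ changes the vertex by $-1$ modulo $p$: the descents by $-1$, and the edge $Mp+1\to(M+N)p$ by $Np-1$. Use the correct time direction, and suppose the last jump of a path ending at $j$ lands on $v$ and is followed by $d$ non-jump steps. Then this jump occurs at time $d$, with $d\equiv v-j\pmod p$. Its digit is therefore $a_{(v-d)\bmod p,\,v}=a_{j\bmod p,\,v}$, independently of $v$.

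So you must take $r\equiv j\pmod p$, and then any non-zero digit of $\mathbf{a}_r$ in its period block will do. Descend from $i$ to $1$, wait, jump to such a $v$, then descend and wrap to $j$. Since $(i-1)+1+d\le 2(M+N)p-1$, this shows directly that $A_{2(M+N)p-1}\cdots A_0$ is positive, with no appeal to Perron--Frobenius.

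This invariant is exactly the paper's third observation: $A_{n+Np-1}\cdots A_n\mathbf{e}_j$ runs once around the period block, reading digits of the single sequence $\mathbf{a}_{(j-n)\bmod p}$. That is why the hypothesis that no $\mathbf{a}_r$ ends in $0^\omega$ is precisely what is needed. The paper combines this with two facts: positivity of the first entry propagates downwards, and any non-zero vector reaches the period block within $Mp$ steps.
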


\begin{proof}
First, observe that for all column matrices $v\in\R_{\ge 0}^{(M+N)p}$ with a positive first entry, for all $\ell\in\{0,\ldots,(M+N)p-1\}$ and all $n_1,\ldots,n_\ell\in\Z$, the $\ell+1$ first entries of the column matrix $A_{n_1}\cdots A_{n_\ell}v$ are positive. This can be easily shown by induction on the number $\ell$ of matrices in the product.

Second, it is easily seen that for all non-zero column matrices $v\in\R_{\ge 0}^{(M+N)p}$ and for all $n_1,\ldots,n_{Mp}\in\Z$, at least one of the last $Np$ entries of the column matrix $A_{n_1}\cdots A_{n_{Mp}}v$ is positive. 

Third, let us show that for all $n\in\Z$ and all $j\in\{Mp+1,\ldots,(M+N)p\}$, the first entry of the column matrix $A_{n+Np-1}\cdots A_n\mathbf{e}_j$ is positive. We have the equalities
\[
 \renewcommand{\arraystretch}{1.3}
   \begin{NiceArray}{@{} R<{{}} @{} LL}
    A_n\mathbf{e}_j                         &= a_{j-n,j}\mathbf{e}_1+\mathbf{e}_{j+1} 
                                            & \Block[c]{3-1}{
                                                \begin{tabular}{@{}l@{}}
                                                \text{empty if}\\
                                                \text{$j=(M{+}N)p$}
                                                \end{tabular}} \\
    A_{n+1}\mathbf{e}_{j+1}                 &= a_{j-n,j+1}\mathbf{e}_1+\mathbf{e}_{j+2} \\
                                            &\; \vdots \\
    A_{n+(M+N)p-j-1}\mathbf{e}_{(M+N)p-1}   &= a_{j-n,(M+N)p-1}\mathbf{e}_1+\mathbf{e}_{(M+N)p} \\
    A_{n+(M+N)p-j}\mathbf{e}_{(M+N)p}       &= a_{j-n,(M+N)p}\mathbf{e}_1+\mathbf{e}_{Mp+1} \\
    A_{n+(M+N)p-j+1}\mathbf{e}_{Mp+1}       &= a_{j-n-Np,Mp+1}\mathbf{e}_1+\mathbf{e}_{Mp+2} 
                                            & \Block[c]{3-1}{\begin{tabular}{@{}l@{}}
                                                \text{empty if}\\
                                                \text{$j=Mp+1$}
                                                \end{tabular}} \\
    A_{n+(M+N)p-j+2}\mathbf{e}_{Mp+2}       &= a_{j-n-Np,Mp+2}\mathbf{e}_1+\mathbf{e}_{Mp+3} \\
                                            &\; \vdots \\
    A_{n+Np-1}\mathbf{e}_{j-1}              &= a_{j-n-Np,j-1}\mathbf{e}_1+\mathbf{e}_j 
    \CodeAfter 
    \SubMatrix.{1-1}{4-2}\} 
    \SubMatrix.{6-1}{9-2}\}
    \end{NiceArray}
\]
Since at least one of the $Np$ digits $a_{j-n,Mp+1},\ldots,a_{j-n,(M+N)p}$ is nonzero, we see that the first entry of the row matrix $A_{n+Np-1}\cdots A_n\mathbf{e}_j$ is positive.

Putting the previous two observations together, we find that for all $n\in\Z$ and all non-zero column matrices $v\in\R_{\ge 0}^{(M+N)p}$, the first entry of the column matrix $A_{n+(M+N)p-1}\cdots A_nv$ is positive. The first observation then gives us that the column matrix $A_{n+2(M+N)p-1}\cdots A_nv$ is positive. Since $v$ is arbitrary, the matrix $A_{n+2(M+N)p-1}\cdots A_n$ must be positive. Since the sequence of matrices $(A_n)_{n\in\Z}$ is periodic (with period $p$), and since $(A_n)_{1,1}$ is clearly positive for all $n$, we obtain the conclusion.
\end{proof}

\begin{proposition}
\label{pro:ParryBase}
Let $p\ge 1$ and let $\mathbf{a}_0,\ldots,\mathbf{a}_{p-1}$ be ultimately periodic sequences of non-negative integer digits, not starting in $0$ and not ending in $0^\omega$. Then there exists an alternate base $B$ of length $p$ such that $\val_{S^i(B)}(0\bigcdot \mathbf{a}_i)=1$ for all $i\in\{0,\ldots,p-1\}$.
\end{proposition}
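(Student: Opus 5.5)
We apply \Cref{lem:ExistenceParryFurstenbergOK} and then \Cref{pro:EventPeriodic}. Choose $M,N$ so that all $\mathbf{a}_i$ have preperiod $Mp$ and period $Np$; this is possible by taking common multiples of the individual preperiods and periods. Set $k=(M+N)p$ and $h=Mp$. If $M=0$, first replace $M$ by $1$, so that $h\in\{1,\ldots,k-1\}$ as \Cref{pro:EventPeriodic} requires; a sequence with preperiod $0$ also has preperiod $p$.

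Form the $p$-periodic sequence of matrices $(A_n)_{n\in\Z}$ of \Cref{lem:ExistenceParryFurstenbergOK}. Its first row is $(A_n)_{1,j}=a_{j-n,j}$, and it has the companion-type structure of \Cref{pro:EventPeriodic}, with the extra $1$ in position $(h+1,k)$. By the lemma, $(A_n)_{n\in\N}$ satisfies the hypotheses of \Cref{pro:ConseqFurstenberg}. We also have $a_{n,1}=a_{1-n,1}\ge 1$ because no $\mathbf{a}_i$ starts with $0$. \Cref{pro:ConseqFurstenberg} then gives positive reals $\gamma_n$ and row matrices $f_n$, both $p$-periodic since $(A_n)$ is. \Cref{pro:EventPeriodic} gives $\gamma_n>1$ together with the identities in items (2) and (3).

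Now compute the digits. In item (2) the coefficient is $a_{n+j,j}$, read as the entry in position $(1,j)$ of $A_{n+j}$, which equals $a_{j-(n+j),j}=a_{-n,j}$. So item (2) becomes
\[
1=\sum_{j=1}^{h}\frac{a_{-n,j}}{\gamma_{n+1}\cdots\gamma_{n+j}}+\frac{f_{n+h,h+1}}{\gamma_{n+1}\cdots\gamma_{n+h}}.
\]
Item (3) similarly gives $f_{n+h,h+1}$ as the sum of the next $Np$ digits of $\mathbf{a}_{-n}$, suitably weighted, plus a remainder involving $f_{n+k,h+1}$. Since $k-h=Np$ is a multiple of $p$, periodicity gives $f_{n+k,h+1}=f_{n+h,h+1}$. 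Item (3) is then a fixed-point equation $F=S+F/\Delta$, where $\Delta=\gamma_{n+h+1}\cdots\gamma_{n+k}>1$. Solving it yields
\[
F=S\sum_{m\ge0}\Delta^{-m}.
\]
By the $Np$-periodicity of the tail of $\mathbf{a}_{-n}$ and the $p$-periodicity of $\gamma$, this series is exactly the contribution of the periodic part of $\mathbf{a}_{-n}$. Substituting into item (2) gives
\[
1=\sum_{j\ge1}\frac{a_{-n,j}}{\gamma_{n+1}\cdots\gamma_{n+j}}.
\]

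Finally, build the base. This sum is $\val$ of $0\bigcdot\mathbf{a}_{-n}$ in the base whose weights, reading digits from the first, are $\gamma_{n+1},\gamma_{n+2},\ldots$. The paper's convention is that digit $j$ of $0\bigcdot\mathbf{t}$ in $S^i(B)$ is divided by $\beta_{i-1}\cdots\beta_{i-j}$. So set $\beta_m=\gamma_{-m}$ and $i=-n$: the sum then becomes $\val_{S^i(B)}(0\bigcdot\mathbf{a}_i)=1$. The base $B=(\beta_m)$ is $p$-periodic with $\beta_m>1$, so the infinite-product conditions hold automatically, and $B$ is an alternate base of length dividing $p$.

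The main obstacle is bookkeeping rather than substance. It consists of checking the index reversal $\beta_m=\gamma_{-m}$ and the residues mod $p$, in particular that the modular indexing $a_{j-n,j}$ at $n+j$ reduces to $a_{-n,j}$. It also requires checking that the remainder term in item (3) involves the same $f$-entry $f_{n+h,h+1}$, which needs $Np\equiv0\pmod p$. A minor point is the statement's "length $p$": if the minimal period of $B$ is smaller, we regard $B$ as written with $p$ entries, as the abuse of notation in \Cref{sec:prelim} allows.
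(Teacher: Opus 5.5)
Your proposal is correct and follows the paper's own proof: it builds the matrices of \Cref{lem:ExistenceParryFurstenbergOK}, takes $\gamma_n, f_n$ from \Cref{pro:ConseqFurstenberg}, applies items (2) and (3) of \Cref{pro:EventPeriodic}, solves the fixed-point equation using $\Delta=\delta^N>1$ and $p$-periodicity, and reindexes with $\beta_m=\gamma_{-m}$. You also insist on $M\ge 1$ so that $h=Mp\in\{1,\ldots,k-1\}$; the paper leaves this point implicit when it takes $M\in\N$, and with $M=0$ the matrix definition in the lemma would be inconsistent.
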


\begin{proof}
Without loss of generality, we assume that $\mathbf{a}_0,\ldots,\mathbf{a}_{p-1}$ all have preperiod $Mp$ and period $Np$ for some $M,N\in\N$. Let $(A_n)_{n\in\Z}$ be the sequence of matrices from \Cref{lem:ExistenceParryFurstenbergOK}. Let thus $(\gamma_n)_{n\in\Z}$ and $(f_n)_{n\in\Z}$ be the sequences given by \Cref{pro:ConseqFurstenberg}, which are periodic with period $p$. For all $n\in\Z$, set $\beta_n=\gamma_{-n}$ and let $\delta=\beta_0\cdots\beta_{p-1}$. By \Cref{pro:EventPeriodic} with $k=(M+N)p$ and $h=Mp$, and keeping in mind the periodicity of $f$, we obtain that 
\begin{itemize}
    \item $\beta_n>1$
    \item $1=\displaystyle{\sum_{j=1}^{Mp}} \frac{a_{n,j}}{\beta_{n-1}\cdots\beta_{n-j}}
            +\frac{f_{-n,Mp+1}}{\beta_{n-1}\cdots\beta_{n-Mp}}$
    \item $f_{-n,Mp+1}=\displaystyle{\sum_{j=Mp+1}^{(M+N)p}} \frac{a_{n,j}}{\beta_{n-Mp-1}\cdots\beta_{n-j}}
            +\frac{f_{-n,Mp+1}}{\delta^N}$
\end{itemize}
for all $n\in\Z$. The third item can be rewritten
\[
    f_{-n,Mp+1}=\frac{\delta^N}{\delta^N-1}\sum_{j=Mp+1}^{(M+N)p} \frac{a_{n,j}}{\beta_{n-Mp-1}\cdots\beta_{n-j}}.
\]
Substituting this in the second item yields
\[
    1   = \sum_{j=1}^{Mp} \frac{a_{n,j}}{\beta_{n-1}\cdots\beta_{n-j}}
            +\frac{\delta^N}{\delta^N-1}\sum_{j=Mp+1}^{(M+N)p} \frac{a_{n,j}}{\beta_{n-1}\cdots\beta_{n-j}} 
        = \val_{S^n(B)}(\mathbf{a}_n)
\]
where we have set $B=(\beta_n)_{n\in\Z}$.
\end{proof}

The following proposition provides lower and upper bounds on the alternate base found in \Cref{pro:ParryBase}.

\begin{proposition}
\label{prop:bounds} 
Let $p\ge 1$ and for each $i\in\{0,\ldots,p-1\}$, let $\mathbf{a}_i=a_{i,1}a_{i,2}\cdots$ be an ultimately periodic sequence of non-negative integer digits, not starting in $0$ and not ending in $0^\omega$. Let $C=Hp+1$ where 
\[
    H=\max\{a_{i,n} : i\in\{0,\ldots,p-1\},\ n\ge 1\}
\]
and let
\begin{multline*}
    L=\min\{\ell\ge 2 : \forall i\in\{0,\ldots,p-1\},
    \text{ the prefix } a_{i,1}\cdots a_{i,\ell} \text{ of } \mathbf{a}_i \\ 
    \text{ contains at least two non-zero digits}\}.
\end{multline*}
Then the alternate base $B=(\beta_{p-1},\ldots,\beta_0)$ found in \Cref{pro:ParryBase} is such that 
\[
    \frac{C^L}{C^L-1}<\beta_i\le C
\]
for all $i\in\{0,\ldots,p-1\}$.
\end{proposition}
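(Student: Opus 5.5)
The plan is to work only with the identity $\val_{S^n(B)}(0\bigcdot\mathbf{a}_n)=1$ supplied by \Cref{pro:ParryBase}, together with $\beta_n>1$ for all $n$ (from \Cref{pro:EventPeriodic}). I will first prove the upper bound $\beta_i\le C$, and then use it to get the lower bound. Throughout, $\delta=\beta_0\cdots\beta_{p-1}$, and first indices of digits are taken modulo $p$.

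\emph{Upper bound.} Fix $n$ and split off the first digit:
\[
\beta_{n-1}=a_{n,1}+x,\qquad x=\sum_{j\ge 2}\frac{a_{n,j}}{\beta_{n-2}\cdots\beta_{n-j}}.
\]
I will bound $x$ by grouping the terms of the sum.
\begin{itemize}
\item For $2\le j\le p$, the product $\beta_{n-2}\cdots\beta_{n-j}$ consists of at most $p-1$ factors, each exceeding $1$. So each of these $p-1$ terms is at most $H$.
\item For $j\ge p+1$, write $j-1=kp+r$ with $k\ge 1$ and $0\le r<p$. The product then contains $k$ full periods, hence is at least $\delta^k$.
\end{itemize}
Summing, and using that each value of $k$ accounts for $p$ indices $j$, gives
\[
x\le H(p-1)+Hp\sum_{k\ge1}\delta^{-k}=H(p-1)+\frac{Hp}{\delta-1}.
\]
With $a_{n,1}\le H$, this yields $\beta_{n-1}\le Hp+\frac{Hp}{\delta-1}$.

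Now suppose for contradiction that some $\beta_{n-1}>C$. Since every $\beta_i>1$, we get $\delta\ge\beta_{n-1}>C=Hp+1$. Hence $\frac{Hp}{\delta-1}<1$, and so $\beta_{n-1}<Hp+1=C$, which is a contradiction. Therefore $\beta_i\le C$ for all $i$.

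\emph{Lower bound.} Fix $n$ and keep only two terms of the sum. The first digit satisfies $a_{n,1}\ge1$. By definition of $L$, there is a second index $2\le\ell\le L$ with $a_{n,\ell}\ge1$. Using the upper bound $\beta_i\le C$ on the remaining $\ell-1$ factors of the second term, and since all terms are non-negative,
\[
1\ge\frac{1}{\beta_{n-1}}+\frac{1}{\beta_{n-1}C^{\ell-1}}\ge\frac{1}{\beta_{n-1}}\Bigl(1+C^{1-L}\Bigr).
\]
Hence $\beta_{n-1}\ge 1+C^{1-L}$. Finally, $C\ge2$ gives $C^{L-1}<C^L-1$, so
\[
1+C^{1-L}>1+\frac{1}{C^L-1}=\frac{C^L}{C^L-1},
\]
which is the required lower bound.

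The main obstacle is the upper bound. The products of consecutive bases inside one period are only known to exceed $1$, so they cannot give a geometric decay on their own. The fix is to bound those first $p-1$ terms crudely by $H$, and to use the large factor $\beta_{n-1}$ only through $\delta$ in the later blocks. The lower bound then follows directly by feeding in $\beta_i\le C$.
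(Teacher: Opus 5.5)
Your proof is correct and follows the paper's argument. Your upper bound $\beta_{n-1}\le Hp+\frac{Hp}{\delta-1}=Hp\,\frac{\delta}{\delta-1}$ is exactly what the paper gets by comparing with the digit sequence $H^\omega$, and you close it with $\delta\ge\beta_{n-1}$ as the paper does (by contradiction rather than direct algebra); your lower bound likewise keeps the first two non-zero digits and bounds factors by $C$, except that leaving $\beta_{n-1}$ unbounded gives the slightly sharper $\beta_{n-1}\ge 1+C^{1-L}$, with strictness coming from $C^{L-1}<C^L-1$ instead of from a third non-zero digit.
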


\begin{proof} 
Let $i\in\{0,\ldots,p-1\}$ and set $\delta=\beta_0\cdots\beta_{p-1}$. As usual, we consider indices reduced modulo $p$. We have
\begin{align*}
    1   &=\val_{S^{i+1}(B)}(0\bigcdot \mathbf{a}_{i+1}) \\
        &\le \val_{S^{i+1}(B)}(0\bigcdot H^\omega) \\
        &=H\left(\sum_{j=0}^{p-1}\frac{1}{\beta_i\cdots\beta_{i-j}}\right)\frac{\delta}{\delta-1} \\
        &\le H\cdot \frac{p}{\beta_i}\cdot \frac{\delta}{\delta-1} \\
        &\le \frac{Hp}{\beta_i-1}.
\end{align*}
We get $\beta_i\le C$. We then have
\[
    1=\val_{S^{i+1}(B)}(0\bigcdot \mathbf{a}_{i+1})>\frac{1}{\beta_i}+\frac{1}{\beta_i\cdots\beta_{i-L+1}}
    \ge \frac{1}{\beta_i}+\frac{1}{C^L}.
\]
Therefore, we obtain $\beta_i> \frac{C^L}{C^L-1}$.
\end{proof}

In the next result, given $p$ sequences of digits, we again obtain an alternate base for which these sequences evaluate to $1$. Compared to~\Cref{pro:ParryBase}, we no longer require each given sequence to be ultimately periodic. The proof uses a reduction to the case of~\Cref{pro:ParryBase}. 

\begin{proposition}
\label{pro:existenceAlternateNonParry} 
Let $p\ge 1$ and for each $i\in\{0,\ldots,p-1\}$, let $\mathbf{a}_i$ be a sequence over a finite alphabet of non-negative integer digits that is lexicographically greater than $10^\omega$. Then there exists an alternate base $B$ of length $p$ such that $\val_{S^i(B)}(0\bigcdot \mathbf{a}_i)=1$ for all $i\in\{0,\ldots,p-1\}$.
\end{proposition}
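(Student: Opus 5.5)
The plan is to approximate each $\mathbf{a}_i$ by ultimately periodic sequences, apply \Cref{pro:ParryBase} to the approximations, use \Cref{prop:bounds} to keep the resulting bases in a compact set, and pass to a limit. Let $H$ be the maximal digit occurring in the $\mathbf{a}_i$. For each $m\ge 1$ and each $i\in\{0,\ldots,p-1\}$, define $\mathbf{a}_i^{(m)}$ as the prefix of length $m$ of $\mathbf{a}_i$ followed by $1^\omega$. For $m$ large, these sequences start with a nonzero digit, since $\mathbf{a}_i>_{\lex}10^\omega$ forces $a_{i,1}\ge 1$. They are ultimately periodic and do not end in $0^\omega$. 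Their digits are bounded by $\max(H,1)$.

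The key uniformity point is the quantity $L$ of \Cref{prop:bounds}. Because $\mathbf{a}_i>_{\lex}10^\omega$, either $a_{i,1}\ge 2$ (a digit at least $2$ can be treated as contributing two units) or $a_{i,1}=1$ and some later digit is nonzero. In the latter case there is an $\ell_i$ with $a_{i,1}\cdots a_{i,\ell_i}$ containing two nonzero digits, and this prefix is shared by all $\mathbf{a}_i^{(m)}$ with $m\ge\ell_i$. Hence $L$ is bounded independently of $m$.

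I would handle the case $a_{i,1}\ge 2$ by noting that the lower bound in the proof of \Cref{prop:bounds} only needs $1>\frac{1}{\beta_i}+\frac{1}{C^L}$, which holds even more readily when $a_{i,1}\ge 2$; alternatively, the truncation $a_{i,1}01^\omega$ already provides a second nonzero digit at position $3$. Thus \Cref{pro:ParryBase} and \Cref{prop:bounds} give alternate bases $B^{(m)}=(\beta^{(m)}_{p-1},\ldots,\beta^{(m)}_0)$ with $\val_{S^i(B^{(m)})}(0\bigcdot\mathbf{a}_i^{(m)})=1$ and $1<c\le\beta^{(m)}_j\le C$, for constants $c,C$ independent of $m$. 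By compactness we extract a subsequence along which $B^{(m)}\to B=(\beta_{p-1},\ldots,\beta_0)$, and each $\beta_j\ge c>1$.

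The main obstacle is justifying the passage to the limit in $\val_{S^i(B^{(m)})}(0\bigcdot\mathbf{a}_i^{(m)})=1$. Since all bases are at least $c>1$, the $n$-th term of each series is bounded by $\max(H,1)c^{-n}$, uniformly in $m$. This gives dominated convergence. For each fixed $n$, the $n$-th digit of $\mathbf{a}_i^{(m)}$ equals $a_{i,n}$ once $m\ge n$, and the products of the $\beta^{(m)}_j$ converge to the products of the $\beta_j$. So the series converge termwise with a uniform summable majorant, and we obtain $\val_{S^i(B)}(0\bigcdot\mathbf{a}_i)=1$ for every $i$. Finally, $B$ is an alternate base of length (a divisor of) $p$, which is what is claimed.
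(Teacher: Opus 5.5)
Your proof is correct and follows the paper's overall strategy:
\begin{enumerate}
\item truncate and append $1^\omega$;
\item apply \Cref{pro:ParryBase};
\item bound the bases uniformly via \Cref{prop:bounds};
\item extract a convergent subsequence;
\item pass to the limit using a uniform geometric majorant.
\end{enumerate}

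The one real difference is how you secure the uniform lower bound. You truncate $\mathbf{a}_i$ itself. When $\mathbf{a}_i=a0^\omega$ with $a\ge 2$, the quantity $L$ of \Cref{prop:bounds} for $a0^{m-1}1^\omega$ is $m+1$, so the stated bound $C^L/(C^L-1)$ degenerates to $1$. You repair this by rerunning the lower-bound argument: a first digit $a\ge 2$ gives $1>a/\beta\ge 2/\beta$ directly, hence $\beta>2$. (In that proof it is the first digit of $\mathbf{a}_{i+1}$ that controls $\beta_i$; your index slip is harmless.)

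The paper instead replaces each $\mathbf{a}_i$ by its quasi-greedy version $\mathbf{b}_i$. For instance, $a0^\omega$ becomes $(a-1)\mathbf{b}_{i-1}$, which has two nonzero leading digits. This lets \Cref{prop:bounds} be applied verbatim with a fixed $L$. The cost is a recursive construction and a final check that $\val_{S^i(B)}(0\bigcdot\mathbf{b}_i)=1$ transfers back to $\mathbf{a}_i$; your route avoids both.

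Your ``alternatively'' remark is wrong as stated. The $m$-th truncation of $a0^\omega$ is $a0^{m-1}1^\omega$, not $a01^\omega$, so the second nonzero digit sits at position $m+1$. Your first fix suffices, so nothing is lost.
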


\begin{proof}
Write $\mathbf{a}_i=a_{i,1}a_{i,2}\cdots$ for every $i\in\{0,\ldots,p-1\}$. We consider the `quasi-greedy version' of the sequences $\mathbf{a}_0,\ldots,\mathbf{a}_{p-1}$, which we denote by $\mathbf{b}_0,\ldots,\mathbf{b}_{p-1}$. That is, we define  
\[
\mathbf{b}_i = 
\begin{cases}
    a_{i,1}\cdots a_{i,\ell-1}(a_{i,\ell}-1)\mathbf{b}_{i-\ell}, & \text{ if } \mathbf{a}_i=a_{i,1}\cdots a_{i,\ell}0^\omega \text{ with }a_{i,\ell}\ge 1,\\
    \mathbf{a}_n, & \text{ otherwise}
\end{cases}
\]
where the first indices are considered modulo $p$. For each $i\in\{0,\ldots,p-1\}$, the sequence $\mathbf{b}_i=b_{i,1}b_{i,2}\cdots$ does not start with a zero digit and does not end in $0^\omega$. Let $L$ be the least integer such that for all $i\in\{0,\ldots,p-1\}$, the prefix $b_{i,1}\cdots b_{i,L}$ has at least two non-zero digits. For $i\in\{0,\ldots,p-1\}$ and $N\ge L$, we define
\[
    \mathbf{b}_i^{(N)} = b_{i,1} \cdots b_{i,N}\,1^\omega.
\]
By \Cref{pro:ParryBase}, there exists an alternate base $B^{(N)}=(\beta^{(N)}_{p-1},\ldots,\beta^{(N)}_0)$ such that $\val_{S^i(B^{(N)})}(0\bigcdot \mathbf{b}_i^{(N)})=1$ for all $i\in\{0,\ldots,p-1\}$. By \Cref{prop:bounds}, there exist uniform bounds $c$ and $C$, with $1<c<C$, such that all $\beta^{(N)}_i$ belong to the interval $(c,C]$. Then we have
\[
    1= \sum_{j=1}^N\frac{b_{i,j}}{\beta^{(N)}_{i-1}\cdots\beta^{(N)}_{i-j}} 
        +\sum_{j=N+1}^{+\infty}\frac{1}{\beta^{(N)}_{i-1}\cdots\beta^{(N)}_{i-j}}
    =  \sum_{j=1}^{+\infty}\frac{b_{i,j}}{\beta^{(N)}_{i-1}\cdots\beta^{(N)}_{i-j}} + \mathrm{E}(i,N)
\] 
with 
\[
    \mathrm{E}(i,N)=\sum_{j=N+1}^{+\infty}\frac{1-b_{i,j}}{\beta^{(N)}_{i-1}\cdots\beta^{(N)}_{i-j}}.
\]
Setting $H=\max\{b_{i,n} : i\in\{0,\ldots,p-1\},\ n\ge 1\}$, we obtain that 
\[
    |\mathrm{E}(i,N)|\le \sum_{j=N+1}^{+\infty}\frac{H}{c^j}=\frac{H}{c^N(c-1)}.
\]
As the $p$-tuples $B^{(N)}$ all belong to the compact set $[c,C]^p$, the sequence $(B^{(N)})_{N\ge L}$ has an accumulation point in $[c,C]^p$, which we denote by $B=(\beta_{p-1},\ldots,\beta_0)$. Letting $N$ grow while following of subsequence of indices such that $B^{(N)}$ tends to $B$, we obtain that $\val_{S^i(B)}(0\bigcdot \mathbf{b}_i)=1$ for all $i\in\{0,\ldots,p-1\}$. This implies that the original sequences $\mathbf{a}_i$ have $S^i(B)$-value equal to $1$ as well, i.e., that $\val_{S^i(B)}(\mathbf{a}_i)=1$ for all $i\in\{0,\ldots,p-1\}$.
\end{proof}

For an alternate base $B$ of length $p$, the Parry conditions given in \Cref{cor:CantorParryCondition} become $t_{i,j+1}t_{i,j+2}\cdots <_{\lex} \mathbf{d}_{i-j}$ and $d_{i,j+1}d_{i,j+2}\cdots \le_{\lex} \mathbf{d}_{i-j}$ for all $i\in\{0,\ldots,p-1\}$ and $j\ge 1$, where the index in $\mathbf{d}_{i-j}$ is seen modulo $p$. Our main result is the following one. It shows that given a list of $p$ sequences satisfying the Parry condition, there exists a unique alternate base with these sequences precisely being the corresponding greedy or quasi-greedy expansions of $1$.

\begin{theorem}
\label{thm:existenceAlternateNonParry}  
Let $p$ be a fixed positive integer and for all $i\in\{0,\ldots,p-1\}$, let $\mathbf{a}_i=a_{i,1}a_{i,2}\cdots$ be a sequence of non-negative integers lexicographically greater than $10^\omega$.
Assume moreover that for all $i\in\{0,\ldots,p-1\}$, if $\mathbf{a}_i$ ends in $0^\omega$ then
\begin{equation}
    \label{eq:Parry-condition-greedy}
    a_{i,j+1}a_{i,j+2}\cdots <_{\lex} \mathbf{a}_{i-j} \text{ for all }j\ge 1,
\end{equation}    
and if $\mathbf{a}_i$ does not end in $0^\omega$ then
\begin{equation}
    \label{eq:Parry-condition-quasi-greedy}
    a_{i,j+1}a_{i,j+2}\cdots \le_{\lex} \mathbf{a}_{i-j} \text{ for all }j\ge 1,
\end{equation}    
where the index in $\mathbf{a}_{i-j}$ is considered modulo $p$. 
Then there exists an alternate base $B$ of length $p$ such that $\mathbf{t}_i =\mathbf{a}_i $ for all $i \in \{0,\ldots, p-1\}$ such that $\mathbf{a}_i$ ends in $0^\omega$ and $\mathbf{d}_i =\mathbf{a}_i $ for all $i \in \{0,\ldots, p-1\}$ such that $\mathbf{a}_i$ does not end in $0^\omega$. Moreover, the following uniqueness properties hold.
\begin{enumerate}
    \item \label{eq:uniqueness1} If $\mathbf{a}_0,\ldots,\mathbf{a}_{p-1}$ are all ultimately periodic, then the base $B$ is unique. 
    \item \label{eq:uniqueness2} If $\mathbf{a}_0,\ldots,\mathbf{a}_{p-1}$ all start with a digit that is greater than $1$, then the base $B$ is unique. 
\end{enumerate}
\end{theorem}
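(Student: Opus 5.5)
Existence will come from combining \Cref{pro:existenceAlternateNonParry} with \Cref{prop:uniqueness}. The digits of each $\mathbf{a}_i$ are bounded, since the Parry conditions force $a_{i,j+1}\le a_{i-j,1}$ for all $j\ge1$, so the alphabet is finite. By hypothesis each $\mathbf{a}_i$ is lexicographically greater than $10^\omega$, so \Cref{pro:existenceAlternateNonParry} gives an alternate base $B=(\beta_{p-1},\ldots,\beta_0)$ with $\val_{S^i(B)}(0\bigcdot\mathbf{a}_i)=1$ for every $i\in\{0,\ldots,p-1\}$.

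We then extend periodically by setting $\mathbf{a}_n=\mathbf{a}_{n\bmod p}$ for all $n\in\Z$. Since $S^n(B)=S^{n\bmod p}(B)$, we get $\val_{S^n(B)}(0\bigcdot\mathbf{a}_n)=1$ for all $n$. The conditions \eqref{eq:Parry-condition-greedy} and \eqref{eq:Parry-condition-quasi-greedy}, read with indices modulo $p$, are exactly the hypotheses of \Cref{prop:uniqueness} for the family $(\mathbf{a}_n)_{n\in\Z}$. That proposition yields $\mathbf{a}_n=\mathbf{t}_n$ when $\mathbf{a}_n$ ends in $0^\omega$, and $\mathbf{a}_n=\mathbf{d}_n$ otherwise, which is the existence statement. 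One point must be checked: the base produced is a genuine Cantor real base, with $\beta_i>1$ and divergent products. This holds because the accumulation point in the proof of \Cref{pro:existenceAlternateNonParry} lies in $[c,C]^p$ with $c>1$, and periodicity then gives divergence of the products.

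For uniqueness (2), suppose $\Gamma$ is another alternate base of length $p$ with the same greedy or quasi-greedy expansions. Each $\mathbf{a}_i$ has first digit at least $2$, so $\mathbf{t}_i$ and $\mathbf{d}_i$ start with that digit. Then \Cref{cor:uniqueness2} applies, provided its proof (via \Cref{pro:uniqueness}) accommodates a mixed list, with some entries greedy and some quasi-greedy. I expect this to need only a remark: in the proof of \Cref{pro:uniqueness}, each index $i$ is handled separately with its own remainder sequence $r_{i,n}$, greedy or quasi-greedy. The bounds used are $r_{i,n}\le1$ and $r_{i,0}=1$, and both hold in either case. Moreover, a first digit $\ge2$ forces $\beta_i\ge2>\alpha$ for the relevant bases, so the diagonal dominance argument goes through unchanged.

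For uniqueness (1), the main obstacle is that the base is not assumed to have large $\beta_i$, so the diagonal dominance argument fails. Suppose all $\mathbf{a}_i$ are ultimately periodic, and let $\Gamma$ be any alternate base realizing them as expansions of $1$. I would show that $\Gamma$ is forced to equal the base built from the matrices of \Cref{lem:ExistenceParryFurstenbergOK}. Any quasi-greedy or greedy expansion can first be replaced by its quasi-greedy version, as in \Cref{pro:existenceAlternateNonParry}, so we may assume none of the $\mathbf{a}_i$ ends in $0^\omega$. Given $\Gamma$ with $\val_{S^i(\Gamma)}(0\bigcdot\mathbf{a}_i)=1$, I would define row vectors $f_n$ whose entries are the tail values
\[
\val_{S^{i-j}(\Gamma)}(0\bigcdot a_{i,j+1}a_{i,j+2}\cdots),
\]
indexed so that the first entry is $1$. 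This works because ultimate periodicity makes these tails range over finitely many sequences, matching the $(M+N)p$ coordinates. A direct computation then shows $\gamma_n f_{n-1}=f_nA_n$ with $\gamma_n=\beta_{-n}$. By the uniqueness clause of \Cref{pro:ConseqFurstenberg}, which rests on the one-dimensionality of the Furstenberg cone in \Cref{thm:Furstenberg}, $\Gamma$ coincides with the base constructed there. This is essentially the argument of \cite[Theorem 6.4]{Charlier&Cisternino&Masakova&Pelantova:2025} recast in the matrix language. The delicate point will be verifying that the $f_n$ so defined lie in the intersection cone. I would argue this via $f_{n}A_{n}\cdots$ invariance: $f_n$ is nonnegative and satisfies the recurrence, so it lies in every $\R_{\ge0}^kA_{n+i}\cdots A_{n+1}$ by backward induction using $f_{n+i}$.
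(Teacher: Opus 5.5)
Your proposal is correct. For existence and for item (2) you follow the paper's route: \Cref{pro:existenceAlternateNonParry} with \Cref{prop:uniqueness}, then \Cref{cor:uniqueness2}. You also add two checks the paper leaves implicit. The first is that the alphabet is finite, because $a_{i,j+1}\le a_{i-j,1}$. The second is that \Cref{cor:uniqueness2} is being applied to a mixed list of greedy and quasi-greedy expansions. That second remark is genuinely needed, since passing to quasi-greedy versions can destroy the first-digit hypothesis: for instance $\mathbf{a}_i=20^\omega$ becomes $1\mathbf{b}_{i-1}$. Your observation settles it: the proof of \Cref{pro:uniqueness} treats each row separately, using only $r_{i,0}=1$ and $0\le r_{i,n}\le 1$.

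For item (1) your route differs from the paper's. The paper cites \cite[Theorem 6.4]{Charlier&Cisternino&Masakova&Pelantova:2025} for the quasi-greedy versions $\mathbf{b}_i$ and returns to the $\mathbf{a}_i$ via the Parry conditions. You instead reprove that uniqueness inside the paper's own matrix framework. Take any alternate base $\Gamma=(\beta'_n)_{n\in\Z}$ in which every $\mathbf{b}_i$ evaluates to $1$, and set $f_{n,j}=\val_{S^{-n}(\Gamma)}(0\bigcdot b_{j-1-n,j}b_{j-1-n,j+1}\cdots)$. These rows are nonnegative, have first entry $1$, and satisfy $\beta'_{-n}f_{n-1}=f_nA_n$; the wrap-around column uses that each $\mathbf{b}_i$ has preperiod $Mp$ and period $Np$. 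Iterating shows $f_n$ lies in every cone $\R_{\ge 0}^{(M+N)p}A_{n+i}\cdots A_{n+1}$. One-dimensionality of the cone then forces $f_n$, and hence $\beta'_{-n}=(f_nA_n)_1$. In this periodic setting this is just Perron--Frobenius for the matrix $A_p\cdots A_1$, which is primitive by \Cref{lem:ExistenceParryFurstenbergOK}.

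Your argument costs a bit more work but buys three things. It is self-contained. It supplies the uniqueness clause of \Cref{pro:ConseqFurstenberg}, which the paper states but never argues. And it gives a stronger conclusion: the base is unique among all alternate bases in which the ultimately periodic $\mathbf{b}_i$ are merely representations of $1$, with no Parry condition needed. The paper's citation is shorter but relies on an external result about expansions.
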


\begin{proof}
The existence part of the result is a straightforward consequence of \Cref{pro:existenceAlternateNonParry,prop:uniqueness}. Now, let us consider the two uniqueness statements. First, suppose that $\mathbf{a}_0,\ldots,\mathbf{a}_{p-1}$ are all ultimately periodic. These $p$ sequences may or may not end in $0^\omega$. As in the proof of \Cref{pro:existenceAlternateNonParry}, we consider the quasi-greedy version $\mathbf{b}_0,\ldots,\mathbf{b}_{p-1}$ of the sequences $\mathbf{a}_0,\ldots,\mathbf{a}_{p-1}$. By \cite[Theorem 6.4]{Charlier&Cisternino&Masakova&Pelantova:2025}, at most one base $B$ may be such that $\mathbf{d}_i=\mathbf{b}_i$ for all $i\in\{0,\ldots,p-1\}$. Since we have assumed the lexicographic conditions \eqref{eq:Parry-condition-greedy} and \eqref{eq:Parry-condition-quasi-greedy}, for any such base $B$, we necessarily have $\mathbf{t}_i=\mathbf{a}_i$ for all $i\in\{0,\ldots,p-1\}$ such that $\mathbf{a}_i$ ends in $0^\omega$ and $\mathbf{d}_i =\mathbf{a}_i $ for all $i \in \{0,\ldots, p-1\}$ such that $\mathbf{a}_i$ does not end in $0^\omega$. This gives us Item \eqref{eq:uniqueness1}. The second uniqueness statement, i.e., Item \eqref{eq:uniqueness2}, follows from \Cref{cor:uniqueness2}.
\end{proof}

The following two corollaries are immediate.

\begin{corollary}
Let $p$ be a fixed positive integer and for all $i\in\{0,\ldots,p-1\}$, let $\mathbf{a}_i=a_{i,1}a_{i,2}\cdots$ be a sequence of non-negative integers ending in $0^\omega$, lexicographically greater than $10^\omega$, and satisfying $a_{i,j+1}a_{i,j+2}\cdots <_{\lex} \mathbf{a}_{i-j}$ for all $i\in\{0,\ldots,p-1\}$ and $j\ge 1$, where the index in $\mathbf{a}_{i-j}$ is considered modulo $p$. Then there exists a unique alternate base $B$ of length $p$ such that $\mathbf{t}_i =\mathbf{a}_i $ for all $i \in \{0,\ldots, p-1\}$. 
\end{corollary}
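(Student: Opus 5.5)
This corollary is the special case of \Cref{thm:existenceAlternateNonParry} in which every $\mathbf{a}_i$ ends in $0^\omega$. The plan is to check the theorem's hypotheses, read off existence, and then obtain uniqueness from Item~\eqref{eq:uniqueness1}.

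\emph{Hypotheses.} Each $\mathbf{a}_i$ is a sequence of non-negative integers and is lexicographically greater than $10^\omega$. Since each $\mathbf{a}_i$ ends in $0^\omega$, the theorem only asks for the strict condition \eqref{eq:Parry-condition-greedy}. That is exactly the assumed inequality $a_{i,j+1}a_{i,j+2}\cdots <_{\lex} \mathbf{a}_{i-j}$ for all $j\ge 1$, with indices read modulo $p$. The non-strict case \eqref{eq:Parry-condition-quasi-greedy} never arises.

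\emph{Existence.} \Cref{thm:existenceAlternateNonParry} gives an alternate base $B$ of length $p$ with $\mathbf{t}_i=\mathbf{a}_i$ for every $i$ such that $\mathbf{a}_i$ ends in $0^\omega$. Here that is every $i\in\{0,\ldots,p-1\}$, so $\mathbf{t}_i=\mathbf{a}_i$ for all $i$.

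\emph{Uniqueness.} A sequence ending in $0^\omega$ is ultimately periodic, with period $1$. Hence $\mathbf{a}_0,\ldots,\mathbf{a}_{p-1}$ are all ultimately periodic, and Item~\eqref{eq:uniqueness1} of \Cref{thm:existenceAlternateNonParry} says the base $B$ is unique. Concretely, suppose $B'$ is another alternate base of length $p$ with $\mathbf{t}_i(B')=\mathbf{a}_i$ for all $i$. Its quasi-greedy expansions $\mathbf{d}_i(B')$ are then obtained from the $\mathbf{a}_i$ by the recursive rule of \Cref{sec:prelim}. This rule depends only on the $\mathbf{a}_i$, not on the base, so $B$ and $B'$ have the same list of quasi-greedy expansions, namely the $\mathbf{b}_i$. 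By \cite[Theorem 6.4]{Charlier&Cisternino&Masakova&Pelantova:2025}, as used in the theorem's proof, this forces $B'=B$.

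The only point that needs care is confirming that uniqueness in Item~\eqref{eq:uniqueness1} is among all alternate bases of length $p$ whose greedy expansions are the given $\mathbf{a}_i$. The argument above shows this, because the quasi-greedy list is determined by the greedy list. The rest is direct specialization.
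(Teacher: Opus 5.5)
Your proposal is correct and matches the paper, which treats this corollary as an immediate consequence of \Cref{thm:existenceAlternateNonParry}. Existence is the special case in which every $\mathbf{a}_i$ ends in $0^\omega$, and uniqueness is Item~\eqref{eq:uniqueness1}, since sequences ending in $0^\omega$ are ultimately periodic; your remark that the greedy list determines the quasi-greedy list $\mathbf{b}_i$ through the recursive rule is exactly the step that makes \cite[Theorem 6.4]{Charlier&Cisternino&Masakova&Pelantova:2025} give uniqueness among all bases with $\mathbf{t}_i=\mathbf{a}_i$.
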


\begin{corollary}
Let $p$ be a fixed positive integer and for all $i\in\{0,\ldots,p-1\}$, let $\mathbf{a}_i=a_{i,1}a_{i,2}\cdots$ be a sequence of non-negative integers not starting in $0$, not ending in $0^\omega$, and satisfying $a_{i,j+1}a_{i,j+2}\cdots \le_{\lex} \mathbf{a}_{i-j}$ for all $i\in\{0,\ldots,p-1\}$ and $j\ge 1$, where the index in $\mathbf{a}_{i-j}$ is considered modulo $p$. Then there exists an alternate base $B$ of length $p$ such that $\mathbf{d}_i =\mathbf{a}_i $ for all $i \in \{0,\ldots, p-1\}$. 
Moreover, the base $B$ is unique if either $\mathbf{a}_0,\ldots,\mathbf{a}_{p-1}$ are all ultimately periodic, or if $\mathbf{a}_0,\ldots,\mathbf{a}_{p-1}$ all start with a digit that is greater than $1$.
\end{corollary}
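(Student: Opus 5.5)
This corollary follows directly from \Cref{thm:existenceAlternateNonParry}. The only work is to check that its hypotheses hold and to read off its conclusion in the quasi-greedy case. Let $\mathbf{a}_0,\ldots,\mathbf{a}_{p-1}$ be as in the statement.

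First, we show that each $\mathbf{a}_i$ is lexicographically greater than $10^\omega$. Since $\mathbf{a}_i$ does not start with $0$, we have $a_{i,1}\ge 1$. If $a_{i,1}\ge 2$, the inequality is clear. If $a_{i,1}=1$, then $\mathbf{a}_i=1a_{i,2}a_{i,3}\cdots$. The suffix $a_{i,2}a_{i,3}\cdots$ is not identically zero, because $\mathbf{a}_i$ does not end in $0^\omega$. Hence $\mathbf{a}_i>_{\lex}10^\omega$.

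Next, no $\mathbf{a}_i$ ends in $0^\omega$, so only condition \eqref{eq:Parry-condition-quasi-greedy} is required. This is exactly the assumed inequality $a_{i,j+1}a_{i,j+2}\cdots\le_{\lex}\mathbf{a}_{i-j}$ for all $j\ge 1$, with indices taken modulo $p$. Condition \eqref{eq:Parry-condition-greedy} applies to no $i$, so it holds vacuously.

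Therefore \Cref{thm:existenceAlternateNonParry} gives an alternate base $B$ of length $p$ such that $\mathbf{d}_i=\mathbf{a}_i$ for every $i$, because every $\mathbf{a}_i$ falls in the case ``does not end in $0^\omega$''.

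Uniqueness follows from the two uniqueness items of \Cref{thm:existenceAlternateNonParry}. If all $\mathbf{a}_i$ are ultimately periodic, Item \eqref{eq:uniqueness1} applies. If all $\mathbf{a}_i$ start with a digit greater than $1$, Item \eqref{eq:uniqueness2} applies.

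The argument has no real obstacle. The only point needing care is the check against $10^\omega$ in the case $a_{i,1}=1$, and that case is settled by the assumption that $\mathbf{a}_i$ has no tail of zeros.
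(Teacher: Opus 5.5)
Your proposal is correct and takes the same route as the paper, which presents this corollary as an immediate consequence of \Cref{thm:existenceAlternateNonParry}. You also spell out the checks the paper leaves implicit: $\mathbf{a}_i>_{\lex}10^\omega$ holds because $\mathbf{a}_i$ neither starts with $0$ nor ends in $0^\omega$, only the quasi-greedy Parry condition is needed, and the two uniqueness cases follow from Items~\eqref{eq:uniqueness1} and~\eqref{eq:uniqueness2}.
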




\section{A family of sequences faithfully coding B-integers}
\label{sec:specific}

Let $B$ be a Cantor real base. A real number is called a \emph{$B$-integer} if it has no fractional part with respect to $B$, i.e., if its $B$-expansion is of the form $\langle x\rangle_B=a_{N-1}\cdots a_0\bigcdot 0^\omega$. These numbers form a discrete subset of $\R$, and thus we can encode the distances between consecutive elements of this set by an infinite sequence. If there is precisely one symbol encoding each possible distance, we call this encoding the \emph{faithful} coding of the $B$-integers (which is clearly unique, up to renaming the letters). It was shown in \cite{Charlier&Cisternino&Masakova&Pelantova:2025} that this sequence has an $S$-adic representation. Let us recall the necessary definitions in order to state this result in a rigorous form. 

\begin{definition}
\label{Def:B-integers}
For all $m,n\in \N$, we define
\[
	\Delta_{m,n} = \val_{S^m(B)} (0\bigcdot d_{m+n,n+1}d_{m+n,n+2} \cdots).
\]
Then, for all $m\in\N$, we define a map $\pi_m\colon\N\to\N$ by
\[
    \pi_m(n)= 
    \begin{cases} 
    \pi_m(n'),  & \text{if }\Delta_{m,n} = \Delta_{m,n'} \text{ for some }n'<n;\\
    n,          & \text{otherwise}
    \end{cases}
\]
and an alphabet $A_m=\pi_m(\N)$. Finally, for all $m\in\N$, we define a substitution $\varphi_m\colon A_{m+1}^*\to A_m^*$ by
\[
	\varphi_m(n)=0^{d_{m+n+1,n+1}}\pi_m(n+1).
\]
\end{definition}

\begin{theorem}[{\cite[Corollary 5.5]{Charlier&Cisternino&Masakova&Pelantova:2025}}] 
\label{thm:S-adic-B-integers}
Let $B$ be a Cantor real base and $m\in\Z$. The $S$-adic sequence $\lim_{n\to \infty}\varphi_m \cdots \varphi_{m+n-1}(0)$ is the faithful coding of the $S^m(B)$-integers. 
\end{theorem}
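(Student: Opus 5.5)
The plan is to identify, level by level, the gap sequence of the $S^m(B)$-integers with the images of the substitutions. The tool is the decomposition of an $S^m(B)$-integer into its last digit and an $S^{m+1}(B)$-integer. Write $\langle x\rangle_{S^m(B)}=a_{N-1}\cdots a_1a_0\bigcdot 0^\omega$. Then $x=a_0+\beta_m y$, where $y$ is the $S^{m+1}(B)$-integer with expansion $a_{N-1}\cdots a_1\bigcdot 0^\omega$. Conversely, for a given $S^{m+1}(B)$-integer $y$, the admissible last digits $a_0$ are exactly those for which the Parry condition of \Cref{thm:CantorParryCondition} holds for the extended word.

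\textbf{Step 1: the gaps.} First I will prove, by induction on the level, the following claim. Let $y<y'$ be consecutive $S^{m+1}(B)$-integers. Then the gap $y'-y$ equals $\Delta_{m+1,n}$ for some $n$, and the admissible digits after $y$ are exactly $0,1,\ldots,d_{m+n+1,n+1}$. Here $n$ is the index recording which suffix $0\bigcdot d_{m+1+n,n+1}d_{m+1+n,n+2}\cdots$ of a quasi-greedy expansion governs the Parry condition at the end of the expansion of $y$. The key computation is
\[
\beta_m\Delta_{m+1,n}=d_{m+n+1,n+1}+\val_{S^m(B)}(0\bigcdot d_{m+n+1,n+2}d_{m+n+1,n+3}\cdots)=d_{m+n+1,n+1}+\Delta_{m,n+1}.
\]
It follows that inside the interval $[\beta_m y,\beta_m y')$ the $S^m(B)$-integers are $\beta_m y+k$ for $k=0,\ldots,d_{m+n+1,n+1}$. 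These are separated by $d_{m+n+1,n+1}$ gaps of length $1=\Delta_{m,0}$. They are followed by one gap of length $\Delta_{m,n+1}$, whose governing suffix index is $n+1$, which propagates the claim. In gap-coding this reads $n\mapsto 0^{d_{m+n+1,n+1}}(n+1)$, which is $\varphi_m$ before identifying letters.

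\textbf{Step 2: the limit word.} Next I will show by induction on $n$ that $\varphi_m\cdots\varphi_{m+n-1}(0)$ codes the gaps between consecutive $S^m(B)$-integers in $[0,\beta_{m+n-1}\cdots\beta_m]$. The base case is the level $S^{m+n}(B)$: its integers in $[0,1]$ are just $0$ and $1$, with the single gap $\Delta_{m+n,0}=1$, coded by the letter $0$. Applying Step 1 $n$ times, and scaling the interval by each $\beta$, gives the inductive step. These words are prefixes of one another because $\varphi_j(0)$ begins with $0$ when $d_{j+1,1}\ge1$, which holds by \Cref{cor:CantorParryCondition}. Their lengths tend to infinity since $\prod\beta_j=+\infty$. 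So the limit exists and codes all the gaps.

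\textbf{Step 3: faithfulness.} Finally I will check that the identifications made by $\pi_m$ are compatible with the substitutions. Suppose $\Delta_{m+1,n}=\Delta_{m+1,n'}$. Then the quasi-greedy algorithm applied to $\beta_m\Delta_{m+1,n}$ produces the same first digit $d=\lceil\beta_m\Delta\rceil-1$ and the same remainder. Hence $d_{m+n+1,n+1}=d_{m+n'+1,n'+1}$ and $\Delta_{m,n+1}=\Delta_{m,n'+1}$. This shows $\varphi_m$ is well defined on $A_{m+1}$, and that two letters coincide exactly when the corresponding gaps are equal, which is faithfulness.

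\textbf{Main obstacle.} I expect the hardest part to be Step 1, namely determining exactly which digits may follow a given $S^{m+1}(B)$-integer. This needs a careful translation of the Parry condition of \Cref{thm:CantorParryCondition} into the statement that the admissible last digits are $0,\ldots,d_{m+n+1,n+1}$, with the governing index shifting by one. I would attempt it by tracking, for each expansion, the longest suffix that coincides with a prefix of the relevant $\mathbf{d}_{\cdot}$, in the spirit of the proof of \Cref{cor:CantorParryCondition}.
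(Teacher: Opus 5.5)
The paper gives no proof of this statement; it quotes it from \cite{Charlier&Cisternino&Masakova&Pelantova:2025}. Your proposal is correct. It is the standard self-similarity argument familiar from the R\'enyi case: $\beta_m$ times the $S^{m+1}(B)$-integers lies among the $S^m(B)$-integers. Your identity $\beta_m\Delta_{m+1,n}=d_{m+n+1,n+1}+\Delta_{m,n+1}$ together with $0<\Delta_{m,n+1}\le 1$ is exactly the substitution rule. The bound on $\Delta_{m,n+1}$ holds because it is the remainder after $n+1$ steps of the quasi-greedy algorithm producing $\mathbf{d}_{m+n+1}$.

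The step you flag as the main obstacle can be bypassed. The admissible digits after $y$ are exactly the $c\in\N$ with $\beta_m y+c<\beta_m y'$, for two reasons.
\begin{itemize}
\item If $\langle y\rangle c$ is admissible, then $\langle y\rangle c<_{\rad}\langle y'\rangle 0$, so $\beta_m y+c<\beta_m y'$.
\item Conversely, suppose $\beta_m y+c<\beta_m y'$. Then $y+c/\beta_m\in[y,y')$, so its greedy $S^{m+1}(B)$-integer part is $y$, its first fractional digit is $c$, and the remainder is $0$. By \Cref{thm:CantorParryCondition}, the conditions for $\langle y\rangle\bigcdot c0^\omega$ in base $S^{m+1}(B)$ are literally those for $\langle y\rangle c\bigcdot 0^\omega$ in base $S^m(B)$.
\end{itemize}
So the digit set depends only on the gap value $y'-y$. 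Your identity then gives $\{0,\ldots,d_{m+n+1,n+1}\}$ immediately, by the same computation as your Step 3, with no suffix tracking. If you keep the suffix route instead, you need two facts. First, among all matching lengths, the longest one gives the smallest bound $d_{m+n+1,n+1}$; this follows from \eqref{eq:Parry-d}. Second, appending $c<d_{m+n+1,n+1}$ resets the governing index to $0$; you use this tacitly for the unit gaps.

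Finally, $\varphi_m$ is defined only on the representatives $A_{m+1}$. What Step 3 actually proves, and what Step 2 needs, is the intertwining $\pi_m\circ\tilde\varphi_m=\varphi_m\circ\pi_{m+1}$ on $\N$, where $\tilde\varphi_m(n)=0^{d_{m+n+1,n+1}}(n+1)$ is your unidentified substitution. Combined with $\pi_{m+n}(0)=0$, this converts your raw index coding into $\varphi_m\cdots\varphi_{m+n-1}(0)$.
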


Our aim in this section is to define a family of substitutions such that any $S$-adic sequence obtained from this family can be obtained as the faithful coding of the $B$-integers for some Cantor real base $B$. We start by defining the family of substitutions.

\begin{definition}
\label{def:eta_c}
Let $k\ge 2$ be a fixed integer. For a $k$-tuple $\mathbf{c}=(c_1,\ldots,c_k)$ of positive integers, we define a morphism $\eta_\mathbf{c}\colon\{0,\ldots,k-1\}^*\to \{0,\ldots,k-1\}^*$ by
\[
    \eta_\mathbf{c}(j)
    = \begin{cases} 
    0^{c_{j+1}} (j +1),  & \text{if } j<k-1; \\
    0^{c_k},     & \text{if } j=k-1.
    \end{cases}    
\]
Then we define
\begin{equation}
\label{eq:Substitutions}
    S=\{\eta_\mathbf{c} : \mathbf{c}=(c_1,\ldots,c_k),\ c_1\ge \cdots\ge c_k\ge 1\}. 
\end{equation}     
\end{definition}

\begin{theorem}
\label{thm:k-aryWords} 
Any $S$-adic sequence of the form $\lim_{n\to\infty}\psi_0\psi_1\cdots \psi_{n-1}(0)$, where $\psi_n\in S$ for all $n\ge 0$, is the faithful coding of the $B$-integers for some Cantor real base $B$.
\end{theorem}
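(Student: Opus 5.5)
Write $\psi_n=\eta_{\mathbf{c}^{(n)}}$ with $\mathbf{c}^{(n)}=(c^{(n)}_1,\ldots,c^{(n)}_k)$. The plan is to build the base from the incidence matrices of the $\psi_n$ via \Cref{pro:FiniteExpansion}. We then check that the resulting finite words are the greedy expansions of $1$, so that the substitutions of \Cref{Def:B-integers} coincide with the $\psi_n$. Finally we conclude with \Cref{thm:S-adic-B-integers}.

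\textbf{Step 1: the matrices and the base.} The matrix of $\eta_\mathbf{c}$ has first row $(c_1,\ldots,c_k)$, ones on the subdiagonal and zeros elsewhere, which is exactly the form of \Cref{pro:FiniteExpansion}. Extend $(\psi_n)_{n\in\N}$ to $n\in\Z$ arbitrarily, say $\psi_n=\eta_{(1,\ldots,1)}$ for $n<0$. With a suitable index convention ($A_n$ the matrix of $\psi_{-n}$, or the reverse, to be fixed so that \eqref{eq:recurrence} fits), the Furstenberg hypothesis must hold on the relevant half-line. Since all $c_j\ge1$, every column has a positive entry and the first row is positive. A product of $k$ consecutive such matrices is therefore positive: the first row spreads to row $i$ after $i-1$ further multiplications by the shift structure. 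The hypotheses of \Cref{thm:Furstenberg} thus hold with blocks of length $k$. \Cref{pro:ConseqFurstenberg} gives $(\gamma_n)$. Since $a_{n,k}=c_k\ge1$, \Cref{pro:FiniteExpansion} gives $\gamma_n>1$ and
\[
1=\sum_{j=1}^k \frac{c^{(\cdot)}_j}{\gamma_{n+1}\cdots\gamma_{n+j}}.
\]
We set $\beta$'s from the $\gamma$'s (reversing indices as in \Cref{pro:ParryBase}). The resulting $B$ has $\prod\beta=\infty$ in both directions, because the $\beta$'s are bounded below by a constant $>1$: the digits are bounded below by $1$, and a bound like \Cref{prop:bounds} should work. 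I would double-check this, since the $c_j$ may be unbounded. In that case the upper bound fails, but only the lower bound is needed: $1>c_1/\beta+1/\beta^k\cdots$ gives $\beta>1$. Uniformity is still needed, so I would instead argue $\beta_{m}\beta_{m-1}\cdots\ge$ growth directly from $\beta_m\ge c_1+\text{positive}$, which forces products to diverge.

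\textbf{Step 2: identify the greedy expansions.} For each $m$ we get $\val_{S^m(B)}(0\bigcdot c^{(m)}_1\cdots c^{(m)}_k0^\omega)=1$ (indexing adjusted). We apply \Cref{cor:greedyExpansion}, which requires
\[
c^{(m)}_{j+1}\cdots c^{(m)}_k0^\omega<_{\lex} c^{(m-j)}_1\cdots c^{(m-j)}_k0^\omega,
\]
with the appropriate shift in index. This is where the monotonicity $c_1\ge\cdots\ge c_k$ must be used. Within a single tuple, a suffix is dominated because it is shorter and componentwise $\le$ the prefix. Across different $n$ it is not automatic, since $c^{(m)}_{j+1}$ could exceed $c^{(m-j)}_1$. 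I expect this to be the main obstacle, and the fix is that the matching is actually diagonal. In \Cref{Def:B-integers}, $\varphi_m(n)=0^{d_{m+n+1,n+1}}\pi_m(n+1)$ uses digit $n+1$ of the expansion at shift $m+n+1$. So the correct assignment puts $c^{(m)}_{j}$ as digit $j$ of $\mathbf{t}_{m+j}$, not of $\mathbf{t}_m$. I would redesign the matrices accordingly, with the entry $a_{n,j}$ taken from $\psi$ at a shifted index, exactly as $a_{j-n,j}$ in \Cref{lem:ExistenceParryFurstenbergOK}. Then the Parry comparison compares $c^{(m)}_{j+1}\ldots$ against $c^{(m)}_1\ldots$ within the same tuple, and monotonicity with finite length gives strict inequality. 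The expansions are finite, so $\mathbf{d}$ is obtained recursively.

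\textbf{Step 3: compute the substitutions.} Next we compute $\Delta_{m,n}$ and $\pi_m$. For $n\le k-1$ the suffixes $d_{m+n,n+1}\cdots$ are distinct, since their values differ in a strictly decreasing way. For $n\ge k$ the suffix of the quasi-greedy word equals $\mathbf{d}_{m}$, so $\Delta_{m,n}=\Delta_{m,0}$. Thus $A_m=\{0,\ldots,k-1\}$ and $\pi_m(k)=0$. This gives $\varphi_m(j)=0^{c_{j+1}}(j+1)$ for $j<k-1$, while $\varphi_m(k-1)=0^{d}\pi_m(k)$, with $d=c_k-1$ from the quasi-greedy decrement, followed by $0$, i.e.\ $0^{c_k}$. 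Hence $\varphi_m=\psi_m$, and \Cref{thm:S-adic-B-integers} finishes the proof.
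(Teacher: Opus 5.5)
Your overall route is the paper's: incidence matrices of the $\psi_n$ padded with $\eta_{(1,\ldots,1)}$, then \Cref{pro:ConseqFurstenberg} and \Cref{pro:FiniteExpansion} to get the base, then \Cref{cor:greedyExpansion} via the diagonal digit assignment and monotonicity, then $\Delta$, $\pi_m$, $\varphi_m$ and \Cref{thm:S-adic-B-integers}. However, your verification of the hypothesis of \Cref{thm:Furstenberg} is wrong as stated.

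That theorem does not ask that every product of $k$ consecutive matrices be positive. It asks that one \emph{fixed} positive matrix $P$ occur as a block product infinitely often along $(A_n)_{n\in\N}$. Positivity of all $k$-blocks does not give this, and along the given substitutions it can fail outright. For $\mathbf{c}^{(n)}=(n+1,1,\ldots,1)$, the $(1,1)$ entry of a block product is at least the largest $c^{(l)}_1$ in the block. The products over the disjoint blocks required by the theorem therefore have $(1,1)$ entries tending to infinity and cannot all equal a fixed $P$.

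Your phrases ``extend arbitrarily'' and ``or the reverse, to be fixed'' leave open exactly the two choices on which this step depends. The missing observation is that the orientation is forced, up to a shift. Matching $\varphi_m=\psi_m$ requires $A_n$ to be the matrix of $\psi_{-n}$ and $\beta_n=\gamma_{-n}$. Then the half-line $(A_n)_{n\ge 1}$ on which \Cref{pro:ConseqFurstenberg} invokes Furstenberg's theorem consists only of padding matrices. Hence $A_{n+k}\cdots A_{n+1}$ is one and the same positive matrix for all $n\ge 0$. The padding is therefore not arbitrary: it is what makes \Cref{thm:Furstenberg} applicable, and it must contain a recurring positive block.

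With that orientation, no redesign of the matrices is needed. Item~2 of \Cref{pro:FiniteExpansion} at $n=-m$ reads $1=\sum_{j=1}^k c^{(m-j)}_j/(\beta_{m-1}\cdots\beta_{m-j})$, i.e.\ $t_{m,j}=c^{(m-j)}_j$. This is exactly the diagonal assignment you correctly identified, and each $A_n$ stays the matrix of a single substitution. Feeding the parameters $c^{(i)}_j$ directly into the formula $a_{j-n,j}$ of \Cref{lem:ExistenceParryFurstenbergOK}, as you propose, would shift twice and bring back the non-diagonal $t_{m,j}=c^{(m)}_j$.

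Two smaller repairs are also needed:
\begin{enumerate}
\item Knowing $\beta_m>1$ for each $m$ does not force $\prod\beta_n=+\infty$, and the $\beta_n$ need not be bounded away from $1$ when the $c_j$ are unbounded. Use instead that the first two digits of $\mathbf{t}_{m+1}$ are at least $1$. Then $1\ge 1/\beta_m+1/(\beta_m\beta_{m-1})$, hence $\beta_m\beta_{m-1}\ge\beta_{m-1}+1>2$, and both products diverge.
\item For $n\ge k$ one has $\Delta_{m,n}=\Delta_{m,n-k}$, not $\Delta_{m,0}$ in general. It is this periodicity that gives $\pi_m(\N)=\{0,\ldots,k-1\}$.
\end{enumerate}
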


\begin{proof} 
For all $n\ge 0$, let $\psi_n\in S$ and let $(a_{n+1,1},\ldots,{a_{n+1,k}})$ be the corresponding $k$-tuple of parameters. For $n\le 0$, we set $(a_{n,1},\ldots,{a_{n,k}})=(1,\ldots,1)$. For all $n\in\Z$, we define a square matrix of size $k$ by
\[
    A_n=\begin{pNiceArray}{ccc|c}
  a_{-n+1,1} & \cdots & a_{-n+1,k-1} & a_{-n+1,k} \\
  \hline
  \Block{3-3}<\Large>{\mathbf{I}} & \\
  &&& \mathbf{0} \\
  &&&
\end{pNiceArray}
\]
where $\mathbf{I}$ is the identity matrix of size $k-1$ and $\mathbf{0}$ is the zero matrix column of size $k-1$. 
The product of any $k$ such matrices is positive. Moreover, the product $A_{n+k}\cdots A_{n+1}$ is equal to $A_0^k$ for all $n\ge 0$. Therefore, the sequence $(A_n)_{n\in\N}$ satisfies the hypotheses of \Cref{pro:ConseqFurstenberg} and we may apply \Cref{pro:ConseqFurstenberg,pro:FiniteExpansion}.

Let $(\gamma_n)_{n\in\Z}$ be the sequence given by \Cref{pro:ConseqFurstenberg} and for all $n\in\Z$, set $\beta_n=\gamma_{-n}$. By \Cref{pro:FiniteExpansion}, we obtain that $\beta_n>1$ and 
\[
    1=\sum_{j=1}^k \frac{a_{n-j+1,j}}{\beta_{n-1}\cdots\beta_{n-j}}
\]
for all $n\in\Z$. By setting $B=(\beta_n)_{n\in\Z}$, this can be reexpressed as 
\[
    1=\val_{S^n(B)}(0\bigcdot a_{n,1}a_{n-1,2}\cdots a_{n-k+1,k}0^\omega)
\]
for all $n\in\Z$. By \Cref{cor:greedyExpansion}, in order to obtain that the greedy expansions $\mathbf{t}_n$ of $1$ are given by
\[
    \mathbf{t}_n=a_{n,1}a_{n-1,2}\cdots a_{n-k+1,k}0^\omega
\]
it suffices to check that these sequences satisfy the Parry condition, i.e., that 
\[
    a_{n-j,j+1}a_{n-j-1,j+2}\cdots a_{n-k+1,k}0^\omega
    <_{\lex} a_{n-j,1}a_{n-j-1,2}\cdots a_{n-j-k+1,k}0^\omega
\]
for all $n\in\Z$ and all $j\ge 1$. These inequalities are indeed satisfied since we have assumed that
\begin{equation}
\label{eq:inequalities}
    a_{n,1}\ge a_{n,2}\ge \cdots \ge a_{n,k}\ge 1.
\end{equation}
The quasi-greedy expansions of $1$ are then given by
\[
    \mathbf{d}_n=a_{n,1}a_{n-1,2} \cdots a_{n-k+2,k-1} (a_{n-k+1,k}-1) \mathbf{d}_{n-k}.
\]
From this, we obtain 
\begin{align*}
    \Delta_{n,0} 
        &= \Delta_{n,\ell k} 
        = \val_{S^n(B)}(0\bigcdot a_{n,1}a_{n-1,2}\cdots a_{n-k+1,k}0^\omega)
        = 1, \\ 
    \Delta_{n,1} 
        &= \Delta_{n,\ell k+1} 
        = \val_{S^n(B)}(0\bigcdot a_{n,2}a_{n-1,3}\cdots a_{n-k+2,k}0^\omega), \\ 
        &\,\ \vdots \\
    \Delta_{n,k-1} 
        &= \Delta_{n,\ell k+k-1} 
        = \val_{S^n(B)}(0\bigcdot a_{n,k}0^\omega)
\end{align*}
for all $\ell\in\N$. Due to the inequalities~\eqref{eq:inequalities}, we derive that 
\[
    \Delta_{n,0} > \Delta_{n,1} > \cdots > \Delta_{n,k-1}.
\]
With the notation of \Cref{Def:B-integers}, we have $\pi_n(\N)=\{0,\ldots,k-1\}$ for all $n\in\Z$. (In the context of \cite{Charlier&Cisternino:2021}, this means that the distances between consecutive $S^n(B)$-integers take precisely $k$ values.) 

By \Cref{thm:S-adic-B-integers}, in order to finish the proof, it suffices to show that $\varphi_n=\psi_n$ for all $n\in\Z$. By \Cref{Def:B-integers}, for all $n\in\N$ and $j\in\{0,\ldots,k-1\}$, we have $\varphi_n(j)=0^{d_{n+j+1,j+1}}\pi_n(j+1)$. From what precedes, we know that 
\[
    d_{n+j+1,j+1}
    = \begin{cases} 
        a_{n+1,j+1},    & \text{if } j<k-1; \\
        a_{n+1,k}-1,          & \text{if } j=k-1
    \end{cases}    
\]
and that $\pi_n(j+1)=j+1$ if $j<k-1$ and $\pi_n(k)=0$. Therefore, for all $j\in\{0,\ldots,k-1\}$, we obtain that
\[
    \varphi_n(j)
    = \begin{cases} 
    0^{a_{n+1,j+1}} (j+1),  & \text{if } j<k-1; \\
    0^{a_{n+1,k}},          & \text{if } j=k-1,
    \end{cases}    
\]
which is precisely $\psi_n(j)$.
\end{proof}

\subsection{Arnoux-Rauzy sequences} 
\label{subsec:AR}

Arnoux-Rauzy sequences can be viewed as a generalization of Sturmian sequences to multiliteral alphabets. Consider the alphabet $\{0,\ldots,k-1\}$, and let $L_0,\ldots,L_{k-1}$ be the morphisms defined as follows: 
\[
    L_i \colon \{0,1,\dots, k-1\}^*\to \{0,1,\dots, k-1\}^*,\ 
    j\mapsto \begin{cases}
        i, & \text{if } j=i; \\
        ij, &\text{if } j\neq i
        \end{cases}
\]
for $i\in\{0,\ldots,k-1\}$.
It is a known fact that every standard  Arnoux-Rauzy sequence~$\mathbf{w}$ over the alphabet $\{0,1,\dots, k-1\}$ has an $S$-adic expansion using the $k$ substitutions $L_0, L_1,\dots,L_{k-1}$ \cite{Glen&Justin:2009,Fogg:2002}. More precisely, every standard Arnoux-Rauzy sequence can be written as 
\[
    \mathbf{w} = \lim_{n\to \infty} \psi_0\psi_1\cdots \psi_{n-1}(0)
\]
where $\psi_n\in \{L_0, L_1,\dots,L_{k-1}\}$ for all $n\in\N$.
In accordance with~\cite{Peltomaki:2024}, we say that a standard Arnoux-Rauzy sequence is \emph{regular} if its $S$-adic expansion can be expressed in the form  
\[
    \psi_0\psi_1\psi_2 \cdots = L_{ 
    i_1}^{a_1}L_{i_2}^{a_2}L_{i_3}^{a_{3}}\cdots,
\]
for positive integers $a_1,a_2,a_3\ldots$ and where the sequence $(i_n)_{n\geq 1}$ is purely periodic with period $k$ and contains every letter in $\{0,\ldots,k-1\}$. Note that every standard Sturmian sequence (i.e., a standard Arnoux-Rauzy sequence over a binary alphabet) is regular. Let us realize that up to renaming the letters of the alphabet,  we can assume that the $S$-adic representation of a regular Arnoux-Rauzy sequence is of the form 
\begin{equation}
\label{eq:Regular}
    \psi_0\psi_1\psi_2 \cdots = L_0^{a_1}L_1^{a_2}L_2^{a_3} \cdots L_{k-1}^{a_k}L_0^{a_{k+1}}L_1^{a_{k+2}}L_2^{a_{k+3}}\cdots
\end{equation}

\begin{example}
The most prominent ternary Arnoux-Rauzy sequence is the so-called Tribonacci sequence $\mathbf{t}$, the fixed point of the substitution $\varphi\colon 0\mapsto 01,\ 1\mapsto 02,\ 2\mapsto 0$. On the ternary alphabet $\{0,1,2\}$, we have 
\[
	L_0\colon\begin{cases}	
	0\mapsto 0\\
	1\mapsto 01\\
    2\mapsto 02
	\end{cases}\quad 
    L_1\colon\begin{cases}	
	0\mapsto 10\\
	1\mapsto 1\\
    2\mapsto 12
	\end{cases}\quad 
    L_2\colon\begin{cases}	
	0\mapsto 20\\
	1\mapsto 21\\
    2\mapsto 2.
	\end{cases}
\]
As the Tribonacci sequence $\mathbf{t}$ is fixed by $\varphi$, it is fixed by $\varphi^3$ as well. One can easily check that $\varphi^3 = L_0L_1L_2$. Consequently, $\mathbf{t}$ is a regular Arnoux-Rauzy sequence. 

The Tribonacci sequence is an example of a so-called \emph{$\beta$-substitution} as introduced by Fabre \cite{Fabre:1995}. Let us define this family of substitutions in the case of \emph{simple Parry numbers}, i.e., real numbers $\beta>1$ such that the expansion of $1$ in the R\'enyi numeration system with the base $\beta$ has the form $d_\beta(1) = t_1t_2\dots t_k0^\omega$. In this case, the $\beta$-substitution is given by 
\[
    0\mapsto 0^{t_1}1,\ 
    1\mapsto 0^{t_2}2,\ 
    \ldots,\
    k-2 \mapsto 0^{t_{k-1}}(k-1),\ 
    k-1 \mapsto 0^{t_k}.
\]
As shown in \cite{Bernat&Masakova&Pelantova:2007}, the fixed point of such a substitution is an Arnoux-Rauzy sequence if and only if $t_1=t_2=\cdots = t_{k-1}=:t$  and $t_k=1$. The fixed point has the $S$-adic expansion
$(L_0^t L_1^t \cdots L_{k-1}^t)^\omega$, hence it is a regular Arnoux-Rauzy sequence.      
\end{example}

We obtain as a consequence of \Cref{thm:k-aryWords} that all Arnoux-Rauzy sequences in this family can be expressed as the coding of the $B$-integers of some Cantor base.

\begin{corollary} 
\label{cor:AR} 
Every regular $k$-ary Arnoux-Rauzy sequence is the faithful coding of the $B$-integers for some Cantor real base $B$.  
In particular, it is the case of every standard Sturmian sequence.
\end{corollary}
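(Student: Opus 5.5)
Our plan is to reduce \Cref{cor:AR} to \Cref{thm:k-aryWords} by showing that a regular Arnoux-Rauzy $S$-adic expansion, after suitable grouping and renaming of letters, becomes an $S$-adic expansion by morphisms from the family $S$ in \eqref{eq:Substitutions}. Up to renaming we may assume the expansion has the form \eqref{eq:Regular}. We will use the permutation $\sigma\colon j\mapsto j-1 \pmod k$ (viewed as a letter-to-letter morphism) to normalize every block $L_i^{a}$ to a conjugate of $L_0^{a}$, namely via $L_{i}=\sigma^{-i}L_0\sigma^{i}$. The first step is a direct computation of $L_0^a\sigma^{-1}$ (or $L_0^a\sigma$, depending on the direction chosen): we have $L_0^a(j)=0^aj$ for $j\neq 0$ and $L_0^a(0)=0$, so composing with the cyclic shift gives a morphism of the shape $j\mapsto 0^{a}(j+1)$ for $j<k-1$ and $k-1\mapsto 0$. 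Such a morphism is $\eta_{\mathbf{c}}$ only when $\mathbf{c}=(a,\dots,a,1)$, and this tuple satisfies $c_1\ge\cdots\ge c_k\ge 1$.

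The second step is to rewrite the product. Writing $L_{i}^{a}=\sigma^{-i}L_0^{a}\sigma^{i}$, the product $L_0^{a_1}L_1^{a_2}\cdots L_{n-1}^{a_n}$ telescopes into $(L_0^{a_1}\sigma^{-1})(L_0^{a_2}\sigma^{-1})\cdots(L_0^{a_n}\sigma^{-1})\sigma^{n}$. The periodicity with period $k$ in \eqref{eq:Regular} is used exactly here, since the indices $i_n$ run through $0,1,\dots,k-1$ cyclically. The trailing permutation $\sigma^n$ only changes which letter is fed in. We must check that $\lim\psi_0\cdots\psi_{n-1}(0)$ equals $\lim \eta_{\mathbf{c}_1}\cdots\eta_{\mathbf{c}_n}(0)$ with $\mathbf{c}_m=(a_m,\dots,a_m,1)$. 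Both are limits of words growing to infinity, and the images $\eta_{\mathbf{c}_1}\cdots\eta_{\mathbf{c}_n}(\ell)$ for any letter $\ell$ all begin with the common prefix given by applying the first factors to $0$, since every image starts with $0$. So the limits coincide as long as the lengths go to infinity, which follows from the fact that the product of any $k$ consecutive matrices is positive.

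The main obstacle is getting the orientation of the conjugation exactly right, namely whether $\sigma$ or $\sigma^{-1}$ is needed so that $j\mapsto 0^a(j+1)$ and the last letter $k-1$ goes to $0^{1}$. We would verify this first on $k=2$ (Sturmian) and on the Tribonacci example, where $\varphi$ itself equals $\eta_{(1,1,1)}$. Once the identification is established, \Cref{thm:k-aryWords} gives a Cantor real base $B$ whose $B$-integers are faithfully coded by the sequence, and the renaming of letters does not affect faithfulness. The Sturmian statement is the case $k=2$, where every standard Sturmian sequence is regular.
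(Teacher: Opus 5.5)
Your proposal is correct and is essentially the paper's proof: your $\sigma^{-1}$ is the paper's cyclic shift $R\colon j\mapsto (j+1)\bmod k$, and you conjugate $L_i=R^iL_0R^{-i}$, telescope the directive sequence into $(L_0^{a_1}R)(L_0^{a_2}R)\cdots$, identify $L_0^{a}R=\eta_{(a,\ldots,a,1)}$, and apply \Cref{thm:k-aryWords}. Your check that the trailing permutation $\sigma^{n}$ does not change the limit (every $\eta_\mathbf{c}(\ell)$ begins with $0$, and the image lengths grow) fills in a point that the paper passes over silently.
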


\begin{proof} 
Let $R\colon \{0,\dots,k-1\}^*\to \{0,\dots,k-1\}^*$ be the morphism given by $j \mapsto (j+1) \bmod k$. It is readily seen that for any $i\in\{0,\dots,k-1\}$, one has $L_i=R^iL_0R^{-i}$ and therefore $L_i^c=R^iL_0^cR^{-i}$ for all $c\in\N$. Hence, the $S$-adic representation \eqref{eq:Regular} of any regular Arnoux-Rauzy sequence can be expressed as $(L_0^{a_1}R)(L_0^{a_2}R)(L_0^{a_3}R)\cdots$. If for $a\ge 1$, we define $\mathbf{c}=(a,a,\ldots,a,1)\in\N^k$, then the substitution $\eta_\mathbf{c}$ from \Cref{def:eta_c} satisfies $L_0^{a}R = \eta_\mathbf{c}$. By \Cref{thm:k-aryWords}, there exists a Cantor real base $B$ such that $\mathbf{w}$ is the faithful coding of the $B$-integers.
\end{proof}

\subsection{Sequences associated with N-continued fractions }
\label{subsec:NCF}

In  \cite{Langeveld&Rossi&Thuswaldner:2023}, Lange\-veld, Rossi and Thuswaldner study $N$-continued fractions of numbers, that is, expansions of the form 
\[
    x = \frac{N}{d_1+\frac{N}{d_2+ \cdots}}.
\]
Note that $1$-continued fractions coincide with the classical continued fractions. The authors introduce a family of substitutions over a binary alphabet: for $d\ge N$, let
\[
    \hat{\sigma}_d\colon 0\mapsto 0^d1,\ 1\mapsto 0^N,
\]  
see \cite[Definition 3.1]{Langeveld&Rossi&Thuswaldner:2023}.
To $x\in(0,1)\setminus\mathbb{Q}$ with the $N$-continued fraction $x=[d_1,d_2,d_3,\ldots]_N$ they assign the so-called {\it dual NCF sequence}  
\[
    \hat{\omega}(x,N)
    =\lim_{n\to\infty}\hat{\sigma}_{d_1}\hat{\sigma}_{d_2}\cdots \hat{\sigma}_{d_n}(0). 
\]
One can easily observe that the substitutions $\hat{\sigma}_{d_n}$ belong to the set of substitutions $S$ defined in \Cref{def:eta_c} for $k=2$. Therefore, \Cref{thm:k-aryWords} has the following corollary.   

\begin{corollary}
\label{cor:dual} 
Every dual NCF sequence $\hat{\omega}(x, N )$ is the faithful coding of the $B$-integers for some Cantor real base $B$.
\end{corollary}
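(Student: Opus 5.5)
The plan is to reduce \Cref{cor:dual} to \Cref{thm:k-aryWords} with $k=2$. The only work is to check that every substitution $\hat{\sigma}_{d_n}$ occurring in $\hat{\omega}(x,N)$ belongs to the family $S$ of \eqref{eq:Substitutions}.

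For $k=2$, a pair $\mathbf{c}=(c_1,c_2)$ gives $\eta_{\mathbf{c}}(0)=0^{c_1}1$ and $\eta_{\mathbf{c}}(1)=0^{c_2}$. The natural choice is therefore $\mathbf{c}=(d,N)$, which gives $\eta_{(d,N)}=\hat{\sigma}_d$. It remains to verify that this pair satisfies the constraint $c_1\ge c_2\ge 1$ defining $S$. Here $N\ge 1$ by definition of $N$-continued fractions. The inequality $d\ge N$ holds because the substitutions $\hat{\sigma}_d$ are only defined for $d\ge N$ in \cite[Definition 3.1]{Langeveld&Rossi&Thuswaldner:2023}.

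The one genuinely non-formal point is that the digits $d_n$ of the $N$-continued fraction of $x$ actually satisfy $d_n\ge N$, so that every $\hat{\sigma}_{d_n}$ is defined and $\hat{\omega}(x,N)$ makes sense. I would justify this from the setting of \cite{Langeveld&Rossi&Thuswaldner:2023}, where the digits are restricted so that $d_n\ge N$, which is precisely the range in which the dual substitutions are introduced. Since the statement speaks of $\hat{\omega}(x,N)$ for every $x$, this is implicitly guaranteed by its definition, and I expect this step to be a remark rather than an obstacle.

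With this in hand, set $\psi_n=\eta_{(d_{n+1},N)}\in S$ for all $n\ge 0$. Then
\[
\hat{\omega}(x,N)=\lim_{n\to\infty}\psi_0\psi_1\cdots\psi_{n-1}(0).
\]
\Cref{thm:k-aryWords} now directly yields a Cantor real base $B$ whose $B$-integers are faithfully coded by $\hat{\omega}(x,N)$.
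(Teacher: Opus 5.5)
Your proposal is correct and matches the paper's argument. The paper observes that $\hat{\sigma}_d=\eta_{(d,N)}$ with $d\ge N\ge 1$, so every $\hat{\sigma}_{d_n}$ lies in the family $S$ for $k=2$, and then applies \Cref{thm:k-aryWords}; your check that the digits satisfy $d_n\ge N$, so that $\hat{\omega}(x,N)$ is well defined, is exactly what the paper's ``one can easily observe'' leaves implicit.
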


\section{Comments}

\begin{enumerate}
 
\item In the case of non-alternate bases, both the existence and uniqueness proofs are missing. For some special cases of strings $\mathbf{a}_n$, $n\in\Z$, the existence of a base can be deduced in a similar way to the proof of \Cref{thm:k-aryWords}.   

\item The case of standard Sturmian sequences of \Cref{cor:AR} was already proved in \cite{Charlier&Cisternino&Masakova&Pelantova:2025}. In the same paper, even some non-standard Sturmian sequences were presented as coding of $B$-integers. It would be interesting to describe how diverse the set of Sturmian  words is hidden among the words encoding $B$-integers. 

\item Infinite sequences associated with the R\'enyi numeration system (i.e.,  for $p=1$ in our setting) have been and are being studied from various perspectives: closedness under mirror image, palindromic richness \cite{Ambroz&Masakova&Pelantova&Frougny:2006}, affine factor complexity \cite{Bernat&Masakova&Pelantova:2007}, critical exponent \cite{Balkova&Klouda&Pelantova:2011}, and recently attractors of these sequences \cite{Gheeraertetal2024}, \cite{Dvorakova&Moravcova:2025}. The listed properties are guaranteed in the R\'enyi systems with bases $\beta$ with a very special expansion of $1$. We believe that within the Cantor real base numeration system we will find wider classes of words with the mentioned properties.
\end{enumerate}

\section*{Acknowledgments}
Savinien Kreczman is supported by the F.R.S.-FNRS Research Fellow grant 1.A.789.23F, by a Fédération Wallonie-Bruxelles Travel Grant and by a WBI Excellence WORLD grant. Avec le soutien de Wallonie-Bruxelles International. 
\includegraphics[scale=0.01]{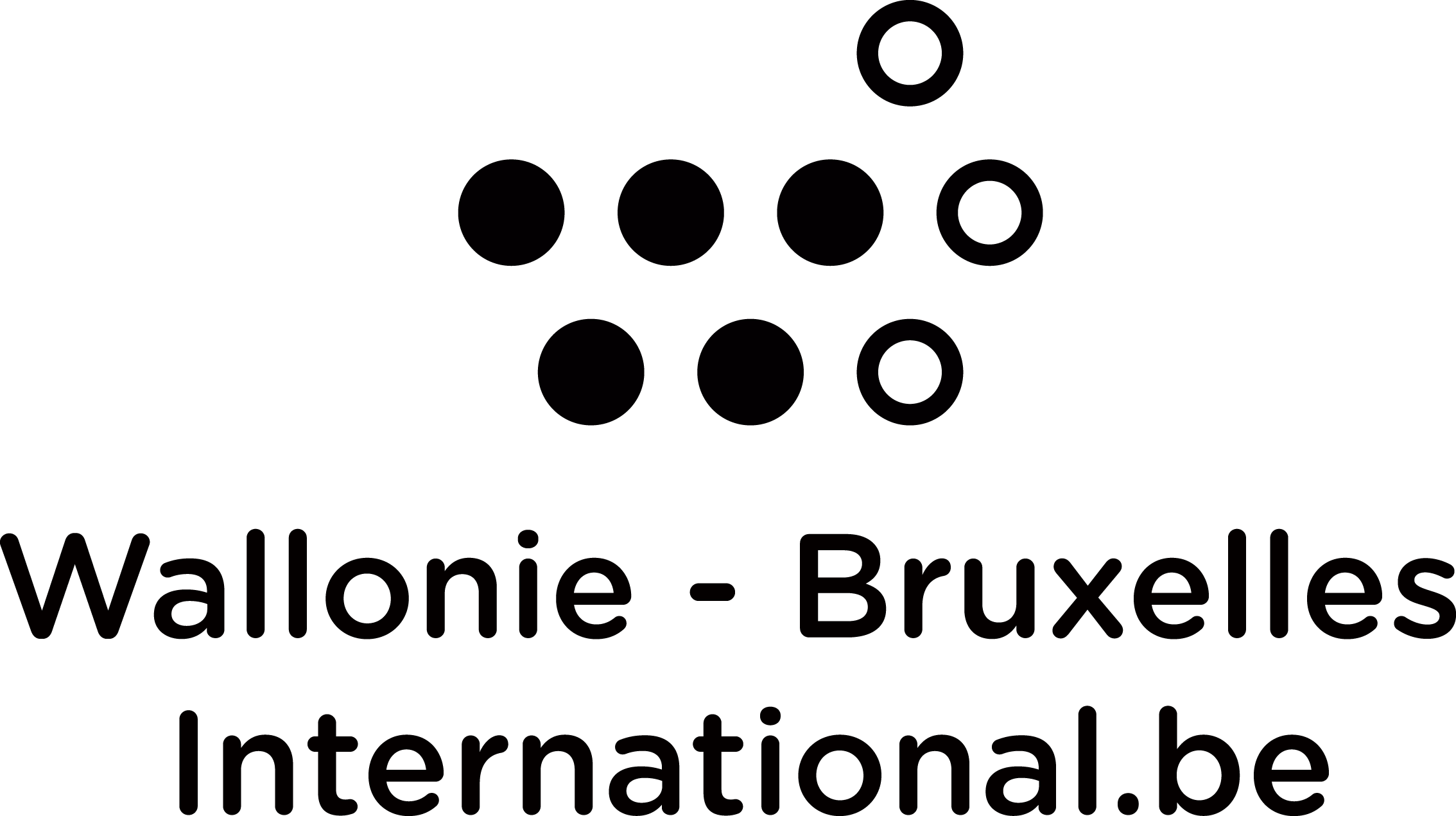}


\bibliographystyle{abbrv}
\bibliography{Biblio_Existence}

\end{document}